\newtheorem{theorema}{Theorem}
\newtheorem{theorem}{Theorem}[section]
\newtheorem{lemma}[theorem]{Lemma}
\newtheorem{remark}[theorem]{Remark}
\newtheorem{definition}[theorem]{Definition}
\newtheorem{corollary}[theorem]{Corollary}
\newcommand{\sect}[1]{\section{#1} \setcounter{equation}{0} }
\newcommand{\norm}[2]{\left\|#1\right\|_{#2}}
\newcommand{\dlt}{\rho}
\newcommand{\dn}{\rho_n}
\newcommand{\dm}{\rho_m}
\newcommand{\dnx}{\dn(x)}
\newcommand{\dmx}{\dm(x)}
\newcommand{\gp}{\lambda_p}
\newcommand{\ds}{\displaystyle}
 \newcommand{\ec}{\end{comment}}
\newcommand{\bc}{ \begin{comment}
 }
 \newcommand{\bsmall}{\begin{scriptsize} \vspace{1cm} \mbox{}\\
  $\downarrow\rule{\textwidth}{0.1mm}  \downarrow$
% \addtolength{\baselineskip}{-0.8\baselineskip}
 \setlength{\baselineskip}{0.3mm}
 }
 \newcommand{\esmall}{\noindent $\uparrow \rule{\textwidth}{0.1mm}\uparrow$ \end{scriptsize}\vspace{1cm} }
\newcommand{\E}{{\mathcal E}}
\newcommand{\I}{J}
\newcommand{\ccc}{{\mathfrak  \vartheta}}
\newcommand{\A}{{\mathcal A}}
\newcommand{\B}{{\mathcal B}}
\newcommand{\CC}{{\mathcal C}}
\newcommand{\D}{{\mathcal D}}
\newcommand{\che}{{\mathfrak t}}
\newcommand{\andd}{\quad\mbox{\rm and}\quad}
\newcommand\e{{\varepsilon}}
\newcommand\w{{\omega}}
\newcommand{\Q}{{\mathcal Q}}
\def\be  {\begin{equation}}
\def\ee  {\end{equation}}
\def\ba  {\begin{eqnarray}}
\def\ea  {\end{eqnarray}}
\def\baa {\begin{eqnarray*}}
\def\eaa {\end{eqnarray*}}
\newenvironment{comment}[2]
{\bgroup\vspace{7pt}
\begin{tabular}{|p{5in}|}
\hline \qquad \bf \footnotesize Comment -- to be deleted in the final version \\
\hline
\quad\sl\footnotesize #1#2} {\\ \hline \end{tabular}
\vspace{7pt}\indent\egroup}
\def\updots{\mathinner{\mkern
1mu\raise 1pt \hbox{.}\mkern 2mu \mkern 2mu \raise
4pt\hbox{.}\mkern 1mu \raise 7pt\vbox {\kern 7 pt\hbox{.}}} }
\def \esssup{\mathop{\rm ess\: sup}\nolimits}
  \newcommand{\K}{{\mathit{K}}}
   \newcommand{\Rea}{{\mathit{R}}}
\newcommand{\C}{\mathbb C}
\newcommand{\W}{{\mathcal{W}}}
\newcommand{\Wdg}{\W^{\delta, \gamma}}
\newcommand{\R}{\mathbb R}
\newcommand{\N}{\mathbb N}
 \renewcommand{\a}{\alpha}
\renewcommand{\b}{\beta}
\newcommand{\ineq}[1]{(\ref{#1})}
\newcommand{\ie}{{\em i.e., }}
\newcommand{\eg}{{\em e.g. }}
\newcommand{\bpic}{
%\begin{scriptsize}
\begin{center}
}
\newcommand{\epic}{
\endpspicture
\end{center}
%\end{scriptsize}
}
\newcommand{\st}{\;\; \big| \;\;}
\renewcommand{\L}{\mathbb{L}}
\newcommand{\Lp}{\L_p}
\newcommand{\Poly}{\Pi}
 \newcommand{\AC}{\mathrm{AC}}
  \newcommand{\loc}{\mathrm{loc}}
\newcommand{\Dom}{{\mathfrak{D}}}
\newcommand{\thm}[1]{Theorem~\ref{#1}}
\newcommand{\lem}[1]{Lemma~\ref{#1}}
\newcommand{\cor}[1]{Corollary~\ref{#1}}
\title{{Polynomial approximation with doubling weights}
\thanks{{\it AMS classification:} 41A10, 41A17, 41A25, 41A27.
{\it Keywords and phrases:} Weighted polynomial approximation, Doubling weights, Moduli of smoothness, Degree of approximation.
}
}
\author{
Kirill A.  Kopotun\thanks{Department of Mathematics, University of
Manitoba, Winnipeg, Manitoba, R3T 2N2, Canada ({\tt
kopotunk@cc.umanitoba.ca}). Supported by NSERC of Canada.}    }
\begin{document}

\maketitle

\begin{abstract}
A nonnegative   function $w\in\L_1[-1,1]$ is called a doubling weight if there is a constant $L$   such that $w(2I) \leq L w(I)$,
for all intervals $I \subset [-1,1]$, where $2I$ denotes the interval having the same center as $I$ and twice as large as $I$, and $w(I) := \int_I w(u) du$.
In this paper, we establish direct   and inverse   results for weighted approximation by algebraic polynomials in the $\Lp$, $0<p\leq \infty$, (quasi)norm weighted by
$w_n := \dnx^{-1} \int_{x-\dnx}^{x+\dnx} w(u) du$, where $\dnx := n^{-1}\sqrt{1-x^2} + n^{-2}$ and $w$ is a doubling weight.

Among other things, we prove that, for a doubling weight $w$, $0< p\leq\infty$, $r\in\N_0$, and $0<\alpha <r+1 - 1/\gp$, we have
\[  \tag{$\ast$}
E_n(f)_{p, w_n} = O(n^{-\a}) \iff  \w_\varphi^{r+1}(f, n^{-1})_{p, w_n} = O(n^{-\a}) ,
\]
where $\gp := p$ if $0 < p < \infty$,  $\gp :=1$ if $p=\infty$,
$\norm{f}{p,w} := \left( \int_{-1}^1 |f(u)|^p w(u) du \right)^{1/p}$,
$\norm{f}{\infty,w} := \esssup_{u\in [-1,1]} \left(|f(u)| w(u)\right)$,
$\w_\varphi^r(f, t)_{p, w} :=  \sup_{0<h\leq t}  \norm{ \Delta_{h\varphi(\cdot)}^r(f,\cdot)}{p, w}$,
$E_n(f)_{p, w} :=   \inf_{P_n\in\Poly_n} \norm{f-P_n}{p,w}$, and
$\Poly_n$ is the set of all algebraic polynomials of degree $\leq n-1$.

We will also introduce classes of doubling weights $\Wdg$ with parameters  $\delta, \gamma \geq 0$ that are used to describe the behavior of $w_n(x)/w_m(x)$ for $m\leq n$. It turns out that
every class $\Wdg$ with $(\delta, \gamma) \in \Upsilon := \left\{(\delta, \gamma)\in\R^2 \st \delta\geq 1, \gamma \geq 0, \delta + \gamma \geq 2 \right\}$ contains all doubling weights $w$,
and for each pair $(\delta, \gamma) \not\in \Upsilon$, there is a doubling weight not in $\Wdg$.
We will establish inverse theorems and equivalence results similar to  ($\ast$)   for doubling weights from classes $\Wdg$. Using the fact that $1\in \W^{0,0}$, we get the well known inverse results and equivalences of type
($\ast$) for unweighted polynomial approximation as an immediate corollary.

Equivalence type results involving related $\K$-functionals and realization type results (obtained as  corollaries of our estimates) are also discussed.

Finally, we mention that ($\ast$)   closes a gap left in the paper by G. Mastroianni and V. Totik ``Best Approximation and moduli of smoothness for doubling weights'', J. Approx. Theory {\bf 110} (2001), 180-199, where ($\ast$) was established for $p=\infty$ and $\w_\varphi^{r+2}$ instead of $\w_\varphi^{r+1}$ (it was shown there that, in general, ($\ast$) is not valid for $p=\infty$ if $\w_\varphi^{r+1}$ is replaced by $\w_\varphi^{r}$).

\end{abstract}

\sect{Introduction and main results}

As usual,   $\Lp(I)$, $0< p \leq \infty$, denotes the set of all measurable on $I$ functions $f$ equipped with the (quasi)norm  $\norm{f}{\Lp(I)}$, %$\L_\infty(I) := \C(I)$ (the set of all continuous functions on $I$)
and $\norm{f}{p} := \norm{f}{\Lp[-1,1]}$.
For a (nonnegative) weight function $w$
and   $I\subset [-1,1]$ denote
\[
\norm{f}{\Lp(I),w} := \left( \int_I |f(u)|^p w(u) du \right)^{1/p} \andd
\norm{f}{\L_\infty(I),w} := \esssup_{u\in I} \left(|f(u)| w(u)\right)   .
\]
We also let $\norm{f}{p,w} := \norm{f}{\Lp[-1,1],w}$.
Note that $\norm{f}{\Lp(I),w} = \norm{w^{1/p} f}{\Lp(I)}$  if $0< p <\infty$, and $\norm{f}{\L_\infty(I),w}= \norm{w  f}{\L_\infty(I)}$ if $p=\infty$.
%
%(\ie we caution the reader that, throughout this paper, $1/\infty$ is actually $1$ though we will avoid making statements that  explicitly use this assumption).
%
Assuming for convenience that $w$ is identically zero outside $[-1,1]$, we recall that   $w$ is called a doubling weight if there is a constant $L$ (the so-called doubling constant of the weight $w$) such that
\[
\int_{2I} w(u) du \leq L \int_{I} w(u) du ,
\]
for all intervals $I \subset [-1,1]$, where $2I$ denotes the interval having the same center as $I$ and twice as large as $I$.
We note that the class of doubling weights is quite large, for example, all generalized Jacobi weights are doubling. Also, they are closely related to   Muckenhoupt's $A_p$, $1\leq p<\infty$, weights all of which are contained in the so-called   $A_\infty$ class of weights that assign to a subset of an interval $I\subset[-1,1]$ a ``fair'' share of the weight of $I$. We refer the reader to \cite[Chapter V]{stein} for details on $A_p$ and $A_\infty$ classes, their characterizations and properties, and to the  series of papers \cites{mt1998,mt1999,mt2000,mt2001} for detailed discussions of various properties of doubling weights.

Following \cites{mt1998,mt1999,mt2000,mt2001}, for a weight $w\in\L_1$,  we set
\[
w_n(x) := {1 \over \dnx} \int_{x-\dnx}^{x+\dnx} w(u) du ,
\]
where $\dnx := n^{-1}\sqrt{1-x^2} + n^{-2}$. % (some properties of $w_n$ are discussed in Section~\ref{properties}).

If $r\in\N$, the weighted modulus of smoothness is defined by
\[
\w_\varphi^r(f, t)_{p, w} :=  \sup_{0<h\leq t}  \norm{ \Delta_{h\varphi(\cdot)}^r(f,\cdot)}{p, w}  ,
\]
where $\varphi(x) := \sqrt{1-x^2}$ and
\[\Delta_h^r(f,x, [a,b]):=\left\{
\begin{array}{ll} \ds
\sum_{i=0}^r  {r \choose i}
(-1)^{r-i} f(x-rh/2+ih),&\mbox{\rm if }\, x\pm rh/2  \in [a,b] \,,\\
0,&\mbox{\rm otherwise},
\end{array}\right.\]
is  the $r$th symmetric difference, $\Delta_h^r(f,x) := \Delta_h^r(f,x, [-1,1])$.

Throughout this paper, we use the standard notation, \ie $\N$ is the set of all positive integers, $\N_0 := \N \cup\{0\}$, $\R_+ := [0,\infty)$, $\Poly_n$ is the set of all algebraic polynomials of degree $\leq n-1$, $c$ are  positive constants that may be different even if they occur in the same line, $A\sim B$ means that $cA \leq B \leq cA$, for some constants $c$ that do not depend on the ``important'' variables (what's ``important'' is usually clear from the context).
We also use the notation $c_*$, $c^*$ and $c_i$ ($i\in\N_0$)  for constants that we need to refer to, but those stay fixed only inside the lemmas where they are introduced (to make this explicit, we use $c_*$, for example, in several statements, but none of these constants are  assumed to be the same).
Additionally,
$E_{n}(f)_{p,w} := \inf_{P_n\in\Poly_n} \norm{f-P_n}{p,w}$ is the rate of best weighted approximation with weight $w$ of $f$ by algebraic polynomials of degree $\leq n-1$.

The following theorem that motivated   this work was proved in \cite[Theorem 1.1]{mt2001}.

\begin{theorema} \label{thmA}
Let $w$ be a doubling weight and $r\in\N$. Then there is a constant $c_*$ depending only on $r$ and the doubling constant of $w$ such that we have for any $f$
\[
E_n(f)_{\infty, w_n} \leq c_* \w_\varphi^r(f, 1/n)_{\infty, w_n}.
\]
Conversely,
\[
\w_\varphi^{r+2} (f, n^{-1})_{\infty, w_n}  \leq  {c_* \over n^{r}}   \sum_{k=1}^{n}    k^{r-1}     E_{k}(f)_{\infty,w_{k}} .
\]
\end{theorema}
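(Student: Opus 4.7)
For the direct estimate, I would construct an explicit near-best polynomial $P_n \in \Poly_n$ built from a Jackson--Steklov type averaging of $f$ on intervals of length $\dnx$. The crucial local fact is that the doubling condition implies $w_n(y) \sim w_n(x)$ whenever $|x-y|\leq c\, \dnx$, so on each such interval $w_n$ behaves essentially as a constant and the classical unweighted Jackson machinery transfers to the weighted setting. Combining this local control with a $K$-functional realization of the weighted modulus,
\[
\w_\varphi^r(f,n^{-1})_{\infty,w_n} \sim \inf_{g} \left(\|f-g\|_{\infty,w_n} + n^{-r}\|\varphi^r g^{(r)}\|_{\infty,w_n}\right),
\]
I would estimate $\|f-P_n\|_{\infty,w_n}$ directly against the right-hand $K$-functional, producing the direct inequality.

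For the inverse estimate, I would dyadically telescope $f = P_1 + \sum_{j\geq 0}(P_{2^{j+1}}-P_{2^j})$, where $P_{2^j}$ is a best polynomial approximant of degree $\leq 2^j - 1$ in $\|\cdot\|_{\infty,w_{2^j}}$. The core ingredient is the weighted Bernstein--Markov inequality for doubling weights (proved in the Mastroianni--Totik series),
\[
\|\varphi^s Q^{(s)}\|_{\infty, w_n} \leq c\, n^s \|Q\|_{\infty, w_n}, \qquad Q \in \Poly_n,
\]
together with the comparability $w_n \sim w_m$ for $n \sim m$ that follows from the doubling property. Applying this with $s=r+2$ to each telescoping block $P_{2^{j+1}}-P_{2^j}$ and using the $K$-functional upper bound for $\w_\varphi^{r+2}$ produces a geometric sum in $j$ that, after reindexing into $k$, reproduces the claimed bound $n^{-r}\sum_{k=1}^n k^{r-1} E_k(f)_{\infty,w_k}$.

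The main obstacle will be the weighted Bernstein inequality and its interaction with the $K$-functional equivalence for the modulus of smoothness. It is exactly this step that forces the jump from the ``natural'' order $r+1$ to $r+2$ in the present theorem: to realize $\w_\varphi^{r+1}$ cleanly by a weighted $K$-functional in $\L_\infty$ one appears to need an extra derivative of slack to absorb the oscillation of $w_n$ across an interval of length $\dnx$. Controlling that loss precisely, especially near the endpoints $\pm 1$ where $\dnx$ degenerates to order $n^{-2}$ and the doubling estimates become the most delicate, is the technically hardest part; the remaining work is standard dyadic bookkeeping combined with the comparability relations between $w_n$ and $w_m$ for $n\sim m$.
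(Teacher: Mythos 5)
This statement is Theorem~A, which the paper does not prove: it is quoted verbatim from Mastroianni and Totik \cite[Theorem~1.1]{mt2001}. The paper's own contribution is a different, self-contained route (piecewise local near-best polynomials glued by the partition of unity of Section~\ref{partition}, rather than your Jackson--Steklov/$\K$-functional scheme) that works for all $0<p\leq\infty$ and, via \lem{lemcrucial}, yields the \emph{stronger} inverse estimate with $\w_\varphi^{r+1}$ in place of $\w_\varphi^{r+2}$. So the right comparison is with the paper's Theorems~\ref{jacksonthm} and \ref{conversethm}, and against those your sketch has one genuine gap and one incorrect diagnosis.

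The gap is in the telescoping step of your inverse argument. Each block $P_{2^{j+1}}-P_{2^j}$ is controlled in the norm $\norm{\cdot}{\infty,w_{2^j}}$ (that is what $E_{2^j}(f)_{\infty,w_{2^j}}$ gives you), but the modulus on the left-hand side carries the weight $w_n$ with $n=2^N\gg 2^j$. The comparability $w_n\sim w_m$ for $n\sim m$, which is all you invoke, does not bridge this: you must compare $w_n$ with $w_{2^j}$ for $j$ ranging all the way down to $0$. The missing ingredient is the uniform inequality $w_n(x)\leq (n/m)^2\, w_m(x)$ for all $m\leq n$ and all $x$ (inequality \ineq{msq} in the paper, an immediate consequence of $\dmx\leq (n/m)^2\dnx$ and $w\geq 0$). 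It is exactly this quadratic loss that explains the exponent $r+2$: applying $\w_\varphi^{r+2}(Q,n^{-1})_{\infty,w_n}\leq c\,n^{-(r+2)}\norm{\varphi^{r+2}Q^{(r+2)}}{\infty,w_n}$ to the block, then paying the factor $(2^N/2^j)^2$ to switch from $w_n$ to $w_{2^j}$, and only then using Bernstein in the $w_{2^j}$-norm, leaves a convergent geometric factor $2^{-(N-j)r}$ and hence the stated $n^{-r}\sum_{k=1}^n k^{r-1}E_k$. Your attribution of the jump from $r+1$ to $r+2$ to the $\L_\infty$ $\K$-functional realization is therefore not the actual mechanism; indeed the paper shows (Lemma~\ref{lemcrucial} and Theorem~\ref{conversethm} with $(\delta,\gamma)=(1,1)$, $\gp=1$) that refining $(n/m)^2$ to $(n/m)\bigl(1+\tfrac{1}{m\varphi(x)}\bigr)$ recovers the result with $\w_\varphi^{r+1}$, so only one order, not two, is genuinely lost for a general doubling weight. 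A secondary soft spot: in the direct part you lean on the equivalence $\w_\varphi^r(f,n^{-1})_{\infty,w_n}\sim\K_{r,\varphi}(f,n^{-1})_{\infty,w_n}$, whose nontrivial direction ($\K\lesssim\w$) in the weighted setting is essentially of the same depth as the Jackson theorem itself; it can be imported from \cite{mt2001}/\cite{dt}, but as written your argument is close to circular unless you say how that direction is established.
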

It immediately follows from \thm{thmA} that, for $0<\a<r$,
\be   \label{mtim}
E_n(f)_{\infty, w_n} = O(n^{-\a}) \iff  \w_\varphi^{r+2}(f, n^{-1})_{\infty, w_n} = O(n^{-\a}) ,
\ee
and it was shown in \cite[p. 183]{mt2001} that \ineq{mtim} is no longer true if $\w_\varphi^{r+2}$ is replaced by $\w_\varphi^{r}$ (in the case $r=1$).
Among other things, we show in this paper that \ineq{mtim} holds if $\w_\varphi^{r+2}$ is replaced by $\w_\varphi^{r+1}$ (see \cor{cor5.6}).

In this paper, we prove direct and inverse theorems for all $0<p\leq \infty$.
For example, we prove that, if  $w$ is a doubling weight,  $r\in\N$,  $0<p\leq \infty$ and $f\in\Lp[-1,1]$, then for every $n \geq r$,
\[
E_n(f)_{p, w_n}   \leq   c \w_\varphi^r(f, 1/n)_{p, w_n} .
\]
%where constants $c$ depend only on  $r$,   $p$, $\ccc$ and the doubling constant of $w$,
Conversely,
\[
\w_\varphi^r (f, n^{-1})_{p, w_n}  \leq  {c \over n^{r-1/\gp}}   \sum_{k=1}^{n}    k^{r-1 -1/\gp }     E_{k}(f)_{p,w_{k}} , \quad 1\leq p \leq \infty ,
\]
and
\[
\w_\varphi^r (f, \ccc n^{-1})_{p, w_n}  \leq  {c \over n^{r-1/p}}   \left( \sum_{k=1}^{n}    k^{rp-2}     E_{k}(f)_{p,w_{k}}^p \right)^{1/p}, \quad 0<p<1 ,
\]
where $0<\ccc\leq 1$ is some constant, $\gp := p$, if $ p <\infty$, and $\lambda_\infty := 1$, if $p=\infty$.

This implies that, for any doubling weight $w$, $r\in\N$,   $0< p \leq \infty$ and $f\in\Lp[-1,1]$, for $0<\a< r-1/\gp$ we have
\[
E_n(f, [-1,1])_{p, w_n} = O(n^{-\a}) \iff  \w_\varphi^r(f, n^{-1})_{p, w_n} = O(n^{-\a}) .
\]

 In fact, we prove the inverse theorems in a more general way. In Section~\ref{properties},
we   introduce classes of doubling weights $\Wdg$ with parameters  $\delta, \gamma \geq 0$ that are used to describe the behavior of $w_n(x)/w_m(x)$ for $m\leq n$. It turns out that
every class $\Wdg$ with $(\delta, \gamma) \in \Upsilon  = \left\{(\delta, \gamma)\in\R^2 \st \delta\geq 1, \gamma \geq 0, \delta + \gamma \geq 2 \right\}$ contains all doubling weights $w$, and for each class
$\Wdg$ with $(\delta, \gamma) \not\in \Upsilon$, there is a doubling weight not in this class.
 We will establish inverse theorems   for doubling weights from classes $\Wdg$ for all $\delta, \gamma\geq 0$. Since positive constants are doubling weights from the class  $\W^{0,0}$,
 we get the well known inverse results  for unweighted polynomial approximation as an immediate corollary.

The paper is organized as follows. In Section~\ref{properties}, we discuss several properties of weights $w_n$ and introduce classes $\Wdg$. An auxiliary result on a polynomial partition of unity that is crucial in our proof of direct results is introduced in Section~\ref{partition}. In Section~\ref{polyapprox}, we approximate the weights $w_n^{1/p}$ by polynomials from $\Poly_n$. Section~\ref{jacksonsection} is devoted to proving Jackson type (\ie direct) results on polynomial approximation with weights $w_n$. Markov-Bernstein type results are discussed in Section~\ref{mbsection}. A major part of this section is devoted to the case $0<p<1$ in preparation for inverse results for these $p$. The inverse theorems are proved in Section~\ref{inversesection} and, in Section~\ref{kfn}, we discuss some results on the equivalence of the moduli $\w_\varphi^r$ as well as the averaged moduli $\tilde \w_\varphi^r$ and  several $\K$-functionals and Realization functionals with weights $w_n$.

It is also worth mentioning that it seems possible to get the Jackson-type results in the case $1\leq p\leq \infty$ using the Jackson-Favard type inequalities proved in \cites{mt1998, mt1999} and equivalence of the moduli $\w_\varphi^r$ with weights $w_n$ and related $\K$-functionals that can be obtained following proofs in \cite{dt} (as was done in \cite{mt2001}, see also \cite{d20}). However, we opted for a different approach in this paper that works in the case $0<p<1$ as well.

\sect{Doubling weights and their subclasses} \label{properties}

In this section, we discuss several properties of $w_n$ that will be used in this paper.
First, we note that it was proved in \cite[Lemma 7.1]{mt2000} that     the doubling condition is equivalent to
\be \label{ineq1.1}
w_n(x) \leq K \left( 1 + n |x-y| + n \left| \varphi(x)- \varphi(y) \right| \right)^s w_n(y), \quad n\in\N, \;\; x,y\in [-1,1],
\ee
with some positive constants $K$ and $s$.

It immediately follows from \ineq{ineq1.1} (see also \cite[(2.3)]{mt2001}) that
\be \label{ineq1.2}
|x-y|\leq M \dnx \quad  \Longrightarrow   \quad    w_n(x) \sim w_n(y)         %      c  \leq {w_n (x) \over w_n(y)} \leq c
\ee
with equivalence constants  depending only on $M$ and the doubling constant of $w$.

We will now discuss the relations between $w_n$ and $w_m$ for different $n$ and $m$.
First of all, it is evident that,   for any $x\in[-1,1]$,
\[
\dnx  \leq \dmx  \leq (n/m)^2 \dnx  \quad \mbox{\rm if} \quad    m \leq n .
\]
 Therefore, since $w$ is nonnegative we have
\be \label{msq}
w_n(x) \leq (n/m)^2 w_m (x) , \quad m\leq n .
\ee
Also, taking into account that $w$ is doubling and using \cite[Lemmas 7.1 and 2.1(vi)]{mt2000}     we have,  for $m\leq n$ and $M:=n/m$,
\[
w_m(x) \leq \dnx^{-1} \int_{x-M^2 \dnx}^{x+M^2\dnx}  w(u) du  \leq c w_n(x) ,
\]
for some constant $c$ that depends on $M$ and the doubling constant of $w$.
Hence, in particular,
\[
w_n(x) \sim w_m(x) , \quad \mbox{\rm if} \;\; n\sim m .
\]

It is rather obvious that \ineq{msq} cannot be improved uniformly for all $x\in [-1,1]$ (for all doubling weights $w$) in the sense that it is no longer valid if $(n/m)^2$ is replaced by $(n/m)^{2-\e}$, for any $\e>0$
(see also \lem{lemcrucial} below).
At the same time, it is clear that $(n/m)^2$ in \ineq{msq} can be replaced by $(n/m)$ for $x$ that are ``far'' from the endpoints of $[-1,1]$.
The following simple lemma makes this observation more precise and turns out to be crucial in our proofs of the inverse theorems.

\begin{lemma} \label{lemcrucial}
Let $w$ be a doubling weight, $m,n\in\N$ be such that $m\leq n$, and let %$\delta, \gamma \in\R$ be such that
\be \label{upsilon}
(\delta, \gamma) \in \Upsilon := \left\{(\delta, \gamma)\in\R^2 \st \delta\geq 1, \gamma \geq 0, \delta + \gamma \geq 2 \right\} .
\ee
%$\delta\geq 1$, $\gamma \geq 0$ and $\delta + \gamma \geq 2$.
Then
\be \label{crucial}
{w_n(x) \over w_m(x)} \leq
\left( {n \over m}\right)^\delta \left( 1 + {1 \over m\varphi(x)} \right)^\gamma , \quad -1\leq x\leq 1.
\ee
Moreover,  if $(\delta, \gamma) \not\in \Upsilon$ then there are doubling weights $w$ for which
\ineq{crucial} is not valid.
\end{lemma}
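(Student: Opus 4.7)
The plan is to first derive a clean pointwise upper bound for $w_n(x)/w_m(x)$ using only nonnegativity of $w$, then reduce \ineq{crucial} to a purely algebraic inequality and verify it via a weighted AM--GM argument, and finally establish sharpness by exhibiting standard Jacobi-type doubling weights concentrated at endpoints or interior points. Since $m \leq n$ forces $\rho_m(x) \geq \rho_n(x)$, the interval $[x-\rho_n(x),\,x+\rho_n(x)]$ is contained in $[x-\rho_m(x),\,x+\rho_m(x)]$, and the nonnegativity of $w$ gives $\rho_n(x)\,w_n(x) \leq \rho_m(x)\,w_m(x)$. Multiplying numerator and denominator of $\rho_m(x)/\rho_n(x)$ by $m^2 n^2$ produces
\[
\frac{w_n(x)}{w_m(x)} \leq \frac{\rho_m(x)}{\rho_n(x)} = \left(\frac{n}{m}\right)^{\!2} \frac{1 + m\varphi(x)}{1 + n\varphi(x)}.
\]
Notably, this step does not use the doubling hypothesis at all; doubling only enters in the sharpness half.

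Setting $a := n/m \geq 1$ and $y := 1 + 1/(m\varphi(x)) \geq 1$, so that $m\varphi(x) = 1/(y-1)$ and $n\varphi(x) = a/(y-1)$, the factor $(1+m\varphi)/(1+n\varphi)$ simplifies to $y/(y+a-1)$, and \ineq{crucial} becomes equivalent to the purely algebraic inequality
\[
a^{2-\delta}\,y^{1-\gamma} \leq a + y - 1 , \qquad a,\,y \geq 1.
\]
Introducing $p := \max(0,\,2-\delta)$ and $q := \max(0,\,1-\gamma)$, the three conditions defining $\Upsilon$, namely $\delta \geq 1$, $\gamma \geq 0$, and $\delta+\gamma \geq 2$, translate precisely into $0 \leq p,\,q \leq 1$ together with $p + q \leq 1$. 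Since $a, y \geq 1$, we have $a^{2-\delta} \leq a^p$ and $y^{1-\gamma} \leq y^q$, and weighted AM--GM applied with weights $(p,\,q,\,1-p-q)$ to $(a,\,y,\,1)$ yields
\[
a^p\,y^q = a^{p}\,y^{q}\,1^{1-p-q} \leq p a + q y + (1-p-q) \leq a + y - 1,
\]
where the last step rewrites as $(1-p)(a-1) + (1-q)(y-1) \geq 0$. The boundary case $\varphi(x) = 0$ corresponds to $y = \infty$ and is trivial (or, when $\gamma = 0$, follows from $\delta \geq 2$).

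For sharpness, I would use the doubling Jacobi-type weights $w(x) = |x-x_0|^{\alpha}$ with $\alpha \in (-1, 0)$. When $\delta < 1$, take $x_0 = 0$ and the test point $x = 0$ where $\varphi(0)=1$: a direct computation gives $w_n(0)/w_m(0) \sim (n/m)^{|\alpha|}$, while the right-hand side of \ineq{crucial} is at most a constant times $(n/m)^{\delta}$, so any choice $\delta < |\alpha| < 1$ produces failure as $n/m \to \infty$. When $\delta + \gamma < 2$, take $x_0 = 1$ and test points $x_n$ with $1 - x_n \sim n^{-2}$ (hence $\varphi(x_n) \sim 1/n$ and $1/(m\varphi(x_n)) \sim n/m$); one computes $w_n(x_n)/w_m(x_n) \sim (n/m)^{2|\alpha|}$ whereas the right-hand side is of order $(n/m)^{\delta+\gamma}$, and any $\alpha$ with $(\delta+\gamma)/2 < |\alpha| < 1$ works. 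The main obstacle I expect is the careful bookkeeping of endpoint clipping of the averaging intervals near $x = 1$ in the integrals defining $w_n(x_n)$ and $w_m(x_n)$, and verifying that $|x-x_0|^\alpha$ is indeed doubling on $[-1,1]$ for these choices of $x_0$; both are routine but require splitting into intervals containing or avoiding $x_0$.
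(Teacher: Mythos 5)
Your proof of the inequality \ineq{crucial} for $(\delta,\gamma)\in\Upsilon$ is correct. It begins with the same reduction as the paper: nonnegativity of $w$ alone gives $w_n(x)/w_m(x)\le \dmx/\dnx$, after which everything rests on an elementary two-variable inequality. The paper verifies that inequality by splitting into the cases $\varphi(x)\ge (n-m)^{-1}$ and $\varphi(x)<(n-m)^{-1}$, using the bound $1+\frac{1}{m\varphi(x)}$ in the first regime and the bound $n/m$ in the second; your substitution $a=n/m$, $y=1+1/(m\varphi(x))$, followed by weighted AM--GM with weights $\bigl(p,q,1-p-q\bigr)$, handles both regimes in one stroke and makes transparent exactly where each of the three conditions defining $\Upsilon$ enters. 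Two small remarks: the algebraic inequality $a^{2-\delta}y^{1-\gamma}\le a+y-1$ is a \emph{sufficient} condition for \ineq{crucial} rather than equivalent to it, since information is already lost in passing to $\dmx/\dnx$; and your observation that the doubling hypothesis plays no role in this half is correct and consistent with the paper.

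There is, however, a gap in the ``moreover'' half. The complement of $\Upsilon$ consists of three overlapping regions: $\gamma<0$ (any $\delta$); $\delta<1$ with $\gamma\ge 0$; and $\delta+\gamma<2$ with $\delta\ge 1$, $\gamma\ge 0$. Your two constructions cover the latter two but not the first: for instance $(\delta,\gamma)=(3,-1/2)$ has $\delta\ge1$ and $\delta+\gamma\ge2$, so neither of your parameter windows $\delta<|\alpha|<1$ and $(\delta+\gamma)/2<|\alpha|<1$ is nonempty, and no choice of $|x-x_0|^{\alpha}$ in your scheme produces a counterexample. This missing case is easy but must be treated separately: when $\gamma<0$ the right-hand side of \ineq{crucial} tends to $0$ as $x\to 1^-$ for fixed $m<n$, while $w_n(x)/w_m(x)$ stays bounded below for, say, $w\equiv 1$, so \ineq{crucial} already fails for the constant weight. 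This is precisely how the paper disposes of $\gamma<0$ before turning to the weights $|x-c|^{-\alpha}$, which it then uses exactly as you do for the remaining two regions (including the endpoint-clipping bookkeeping you flag, via the estimate $(w^c)_n(x)\sim\min\{|x-c|^{-\alpha},\dn(c)^{-\alpha}\}$). With the $\gamma<0$ case added, your argument is complete.
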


\begin{proof} Clearly, \ineq{crucial} is satisfied if $m=n$, and so we assume that $m\leq n-1$.
Since $\dnx \leq \dmx$, we conclude that
 \[
 \int_{x-\dnx}^{x+\dnx} w(u) du  \leq  \int_{x-\dmx}^{x+\dmx} w(u) du ,
 \]
and hence
\[
{w_n(x) \over w_m(x)} \leq {\dmx \over \dnx} = {n \over m} \cdot {\varphi(x) +1/m \over \varphi(x) +1/n} .
\]
Therefore,  \ineq{crucial} will be proved if we   show that
 \[
L:=  {\varphi(x) +1/m \over \varphi(x) +1/n} \leq \left( {n \over m}\right)^{\delta-1} \left( 1 + {1 \over m\varphi(x)} \right)^\gamma  =: R.
\]
 Now, if $\varphi(x) \geq (n-m)^{-1}$, then
 \begin{eqnarray*}
 L &\leq& 1 + {1 \over m\varphi(x)} = \left( 1 + {1 \over m\varphi(x)} \right)^\gamma \left( 1 + {1 \over m\varphi(x)} \right)^{1-\gamma} \\
 &\leq& \left( 1 + {1 \over m\varphi(x)} \right)^\gamma \max\left\{ 1, (n/m)^{1-\gamma} \right\} \leq R .
 \end{eqnarray*}
 If  $\varphi(x) <  (n-m)^{-1}$, then
 \[
L = 1 + {1/m-1/n \over  \varphi(x) +1/n} \leq \frac{n}{m} \leq    \left( {n \over m}\right)^{\delta+\gamma-1}       \leq R .
 \]
We will now construct examples showing that \ineq{crucial} is no longer valid if  $(\delta, \gamma) \not\in \Upsilon$.

Let $R_{\delta, \gamma}(x)$ denote the right-hand side of  \ineq{crucial}. Since $\lim_{x \to 1}  R_{\delta, \gamma}(x) = 0$ if $\gamma < 0$, it is obvious that $\gamma$ has to be nonnegative for \ineq{crucial} to hold (for example, if $\gamma <0$, then $1 \not\in \Wdg$  for any $\delta$).

Now, let  $w^c(x) = |x-c|^{-\a}$ with $c\in [-1,1]$ and $0<\a <1$. It is not difficult to see that $w^c$ is doubling and $(w^c)_n(x) \sim \min\{|x-c|^{-\a}, \dn(c)^{-\a}\}$.
Hence, if $m\leq n$, then
\[
{(w^1)_n(1-n^{-2}) \over (w^1)_m(1-n^{-2})} \sim \left(\frac nm  \right)^{2\a}  \andd R_{\delta, \gamma}(1-n^{-2}) \sim \left(\frac nm  \right)^{\delta+\gamma}.
\]
Hence, if $\delta+\gamma < 2$, then \ineq{crucial} does not hold for the doubling weights $w^1$ with $\max\{(\delta+\gamma)/2, 0\} < \alpha <1$.
Also,
\[
{(w^0)_n(0) \over (w^0)_m(0)} \sim \left(\frac nm  \right)^{\a}  \andd R_{\delta, \gamma}(0) \sim \left(\frac nm  \right)^{\delta},
\]
and so, if $\delta < 1$, then  \ineq{crucial} does not hold for the doubling weights $w^0$ with $\max\{\delta, 0\}  < \alpha <1$.
\end{proof}

\begin{definition} \label{defcrucial}
Let $\delta,\gamma \geq 0$.
We say that a doubling weight $w$ belongs to the class $\Wdg_\Lambda$ if, for all $m,n\in\N$ such that  $m\leq n$ and all $x\in [-1,1]$,
\be \label{touse}
w_n(x) \varphi(x)^\gamma \leq \Lambda n^\delta m^{\gamma-\delta} \dm(x)^\gamma w_m(x) ,
\ee
for some constant   $\Lambda$ which may depend only on the weight $w$, parameters $\delta$ and $\gamma$, and is independent of $m$, $n$ and $x$.
We also denote
\[
\Wdg := \left\{ w \st w\in \Wdg_\Lambda \; \mbox{\rm for some } \Lambda >0 \right\}.
\]
\end{definition}

We remark that \ineq{touse} with $\Lambda =1$ is equivalent to \ineq{crucial} which is the reason for Definition~\ref{defcrucial}. Also, it is  evident that $\W^{\delta_1, \gamma_1} \subset \W^{\delta_2, \gamma_2}$ if $\delta_1 \leq \delta_2$ and $\gamma_1\leq \gamma_2$.

\begin{remark} \label{remcr}
 \lem{lemcrucial} implies that all doubling weights belong to the class $\Wdg_1$  if $(\delta, \gamma)\in \Upsilon$, where $\Upsilon$ is defined in \ineq{upsilon}. Moreover, for any pair $(\delta, \gamma)\not\in \Upsilon$, there is a doubling weight $w$ such that $w\not\in \Wdg$.
\end{remark}

Of course, there are many doubling weights belonging to the classes $\W^{\delta, \gamma}$ with $(\delta, \gamma)\not\in \Upsilon$. For example, any nonzero constant weight   belongs to $\W^{0,0}$.
The weight $w^*(x) = (1-x)^{-\a}$, $0<\a <1$,   belongs to $\Wdg$ for all $(\delta,\gamma)\in\R_+^2$ such that $\delta+\gamma \geq 2\a$.
The weight  $w_*(x) = |x|^{-\a}$, $0<\a <1$, belongs to $\Wdg$ for all $(\delta,\gamma)\in\R_+^2$ such that $\delta \geq \a$.
Hence, the doubling weight $W(x) := |x|^{-\a} (1-x)^{-\b}$, $0<\a,\b <1$,  which is a combination of $w^*$ and $w_*$ belongs to $\Wdg$ for all $(\delta,\gamma)\in\R_+^2$ such that $\delta \geq \a$ and $\delta+\gamma \geq 2\b$.

Following \cites{mt2000, mt2001} we say that a weight $w$ satisfies the $A^*$ property if there is a constant $c^*$ such that, for all $I\subset [-1,1]$ and $x\in I$,
\[
w(x) \leq {c^* \over |I|} \int_I w(u) du .
\]
Then $w$ is doubling and \ineq{ineq1.2} implies that $w_m(x)\sim w_m(u)$ if $|x-u| \leq M \dmx$. Hence, if $m\leq n$, taking into account that $\dnx \leq \dmx$ we conclude that $w_m(x)\sim w_m(u)$ for $|x-u| \leq  \dnx$. Therefore, denoting $J_m(u) :=  [u-\dm(u), u+\dm(u) \cap [-1,1]$, we have (see also \cite[p. 189]{mt2001})
\begin{eqnarray*}
w_n(x) &= & {1 \over \dnx} \int_{x-\dnx}^{x+\dnx} w(u) du  \\
& \leq &
{1 \over \dnx} \int_{x-\dnx}^{x+\dnx} \left(  { c^* \over \left| J_m(u)  \right|     } \int_{J_m(u)} w(v) dv   \right) du \\
&\leq& {1 \over \dnx} \int_{x-\dnx}^{x+\dnx} \left(  { c^* \over  \dm(u) } \int_{u-\dm(u)}^{u+\dm(u)} w(v) dv   \right) du \\
& = & { c^* \over   \dnx } \int_{x-\dnx}^{x+\dnx} w_m(u) du \sim { 1\over    \dnx } \int_{x-\dnx}^{x+\dnx} w_m(x) du  \sim  w_m(x) .
\end{eqnarray*}
Therefore,  we can make the following assertion.

\begin{remark} \label{remast}
Any weight $w$ that satisfies the $A^*$ property is in the class $\W^{0,0}_\Lambda$ with the constant $\Lambda$ that depends only on the doubling constant of $w$.
\end{remark}

\medskip

Finally, we will need the following technical lemma that will be quite useful in the proofs of  direct results
(note that $x_i$, $I_i$ and $\psi_i$ are defined at the beginning of Section~\ref{partition}).

\begin{lemma}
For a doubling weight $w$, $n\in\N$ and all  $1\leq i\leq n$, $x\in [-1,1]$ and $y\in I_i$, we have
\be \label{ineq1.3}
w_n(x) \leq c \psi_i(x)^{-s} w_n(y) \andd w_n(y) \leq c \psi_i(x)^{-s} w_n(x),
\ee
where constants $c$ and parameter $s\geq 0$ depend only on the doubling constant of $w$.
\end{lemma}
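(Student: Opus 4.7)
The plan is to derive \ineq{ineq1.3} from the equivalence \ineq{ineq1.1} combined with the expected pointwise decay estimate for the partition-of-unity polynomials $\psi_i$ that will be constructed in Section~\ref{partition}. Since \ineq{ineq1.1} is symmetric in $x$ and $y$, both halves of \ineq{ineq1.3} will follow at once from a single bound, so it suffices to show that for every $y \in I_i$,
\[
1 + n|x-y| + n\bigl|\varphi(x)-\varphi(y)\bigr| \leq c\, \psi_i(x)^{-1/\mu},
\]
for a suitable exponent $\mu$ coming from the construction of $\psi_i$, and then plug this into \ineq{ineq1.1} (afterwards renaming $s/\mu \rightsquigarrow s$).

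The key input is that the partition $\{\psi_i\}$ introduced in Section~\ref{partition} consists of nonnegative polynomials which are bounded below by a positive constant on $I_i$ and which admit, for any desired $\mu$, a pointwise upper estimate of the standard form
\[
\psi_i(x) \leq c_\mu\,\bigl(1 + n|x-x_i| + n|\varphi(x)-\varphi(x_i)|\bigr)^{-\mu}, \qquad x\in[-1,1].
\]
Inverting this gives $1 + n|x-x_i| + n|\varphi(x)-\varphi(x_i)| \leq c\,\psi_i(x)^{-1/\mu}$. To pass from $x_i$ to an arbitrary $y \in I_i$, I would use the triangle inequality together with the length estimates $|I_i|\sim \dn(x_i)$ and $n\dn(x_i)\leq 2$, which yield $n|x_i-y| \leq c$. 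The corresponding inequality $n|\varphi(x_i)-\varphi(y)|\leq c$ follows from the Lipschitz/H\"older behavior of $\varphi$ combined with $\dn(x_i) \leq c\varphi(x_i)/n + c/n^2$ (the two cases $|x_i|\leq 1-1/n^2$ and $|x_i|$ near $\pm 1$ handled separately). Substituting into \ineq{ineq1.1} then produces $w_n(x) \leq c\,\psi_i(x)^{-s/\mu} w_n(y)$, i.e.\ the first half of \ineq{ineq1.3}; swapping $x$ and $y$ in \ineq{ineq1.1} gives the second half with the same factor $\psi_i(x)^{-s}$.

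The main obstacle I anticipate is the verification of $n|\varphi(x_i)-\varphi(y)|\leq c$ for $y\in I_i$ near the endpoints $\pm 1$, where $\varphi$ is only H\"older-$\tfrac12$ continuous: it is precisely to absorb this loss that the definition uses $\dn$ rather than $1/n$, so the estimate hinges on a careful use of $\dn(x_i) = n^{-1}\varphi(x_i) + n^{-2}$. Once this endpoint calculation is performed, the remainder of the argument is a direct consequence of the doubling-weight characterization \ineq{ineq1.1} and the standard localization property of $\psi_i$.
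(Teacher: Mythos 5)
Your overall route is the same as the paper's: apply the doubling characterization \ineq{ineq1.1} and dominate $1+n|x-y|+n\left|\varphi(x)-\varphi(y)\right|$ by a power of $\psi_i(x)^{-1}$. However, the ``key input'' you invoke is wrong as stated, because you have misidentified $\psi_i$. In this paper $\psi_i$ is not one of the partition-of-unity polynomials of Section~\ref{partition}; it is the explicit function $\psi_i(x)=|I_i|/(|x-x_i|+|I_i|)$, fixed once and for all, with no tunable parameter $\mu$ (the parameter $\mu$ belongs to the polynomials $T_i(n,\mu,\e_1,\e_2)$, which satisfy $|T_i-\chi_i|\leq c\,\psi_i^\mu$ --- a different object). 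Consequently the estimate $\psi_i(x)\leq c_\mu\left(1+n|x-x_i|+n|\varphi(x)-\varphi(x_i)|\right)^{-\mu}$ ``for any desired $\mu$'' is false for $\mu>1$: for $i$ near $n/2$ and $|x-x_i|\sim 1$ one has $\psi_i(x)\sim 1/(n|x-x_i|)$, which is not $O\left((n|x-x_i|)^{-\mu}\right)$ uniformly in $n$ once $\mu>1$.

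What saves the argument is that you only need the case $\mu=1$, and that case is true and is exactly the computation in the paper's proof: for $1\leq i\leq n-1$ one has $\psi_i(x)^{-1}=1+|x-x_i|/|I_i|\sim 1+n|x-x_i|/\varphi(x_i)$ (since $|I_i|\sim\varphi(x_i)/n$ there), while $n|\varphi(x)-\varphi(x_i)|=n|x^2-x_i^2|/(\varphi(x)+\varphi(x_i))\leq 2n|x-x_i|/\varphi(x_i)$, so that $1+n|x-x_i|+n|\varphi(x)-\varphi(x_i)|\leq c\,\psi_i(x)^{-1}$ and \ineq{ineq1.1} gives the bound with exponent $s$ directly (no renaming of $s$ is needed). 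The remaining ingredients you list --- passing from $x_i$ to an arbitrary $y\in I_i$ (the paper does this via \ineq{ineq1.2}, i.e., $w_n(y)\sim w_n(x_i)$ on $I_i$, rather than your triangle inequality inside \ineq{ineq1.1}, but both work since $n|x_i-y|$ and $n|\varphi(x_i)-\varphi(y)|$ are indeed bounded on $I_i$) and the endpoint interval $i=n$ where $\varphi(x_n)=0$ (the paper uses $\psi_n\sim\psi_{n-1}$) --- are handled essentially as you anticipate. So the proof is salvageable and then coincides with the paper's, but as written it rests on a decay property that the functions $\psi_i$ do not have.
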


\begin{proof}
 Taking into account that, for $1\leq i\leq n-1$,  $|I_i| \sim \varphi(x_i)/n$   and  $\psi_i(x)^{-1} = 1 + |x-x_i|/|I_i| \sim 1 + n |x-x_i|/\varphi(x_i)$,
and using \ineq{ineq1.1} we have
\begin{eqnarray*}
\lefteqn{\max \left\{ w_n(x)/w_n(x_i), w_n(x_i)/w_n(x) \right\}   \leq  c \left( 1 + n |x-x_i| + n \left| \varphi(x)- \varphi(x_i) \right| \right)^s} \\
& \leq & c \left( 1 + n |x-x_i| +  { n|x^2-x_i^2| \over \varphi(x) + \varphi(x_i)}   \right)^s
 \leq  c
\left( 1 + n |x-x_i| +  { 2n|x -x_i | \over   \varphi(x_i)}   \right)^s \\
&\leq &
c \left( 1 +    { 3n|x -x_i | \over   \varphi(x_i)}   \right)^s
  \leq
c \psi_i(x)^{-s} , \quad 1\leq i\leq n-1 .
\end{eqnarray*}
Therefore, observing that \ineq{ineq1.2} implies that $w_n(u)\sim w_n(y)$, for $u,y\in I_i$, $1\leq i \leq n$, and $\psi_n(x) \sim \psi_{n-1}(x)$, we get \ineq{ineq1.3}.
\end{proof}

\sect{Partition of unity} \label{partition}

First, we recall the usual setup  for polynomial approximation (see \eg \cite{sh-book}).
Let $(x_i)_{i=0}^n$ be the Chebyshev partition of $[-1,1]$, \ie
 $x_i = \cos(i\pi/n)$, $0\leq i\leq n$, $I_i := [x_i, x_{i-1}]$, $1\leq i\leq n$,
 \[
 \psi_i :=    \psi_i(x) :=  \frac{|I_i|}{|x-x_i|+|I_i|} , \quad 1\leq i \leq n .
 \]
Then, for $1\leq i\leq n$,
 \[
 \che_i(x) := \left( \cos 2n \arccos x \over x - x_i^0 \right)^2 +
 \left( \sin 2n \arccos x \over x - \bar x_i  \right)^2
 \]
 is an algebraic polynomial of degree $4n-2$, where $\bar x_i :=  \cos(i\pi/n - \pi/2n)$, $1\leq i\leq n$,
$x_i^0 := \cos(i\pi/n -  \pi/4n)$, $1\leq i < n/2$, and  $x_i^0 := \cos(i\pi/n - 3\pi/4n)$, $n/2 \leq i \leq n$.
The following properties of the Chebyshev partition will often be used: % without explicitly mentioning them:
\[
|I_i| \sim \dnx ,\; x\in I_i, \; 1\leq i\leq n , \andd |I_i| \sim |I_{i+1}| , \; 1\leq i \leq n-1 .
\]
It is also convenient to denote
\[
\chi_i(x) := \chi_{[x_i, 1]}(x) =
\begin{cases}
1, & \mbox{\rm if } x_i \leq x \leq 1, \\
0, & \mbox{\rm otherwise}.
\end{cases}
\]

The crucial (obvious) property of   polynomials $\che_i$, $1 \leq i \leq n$, is
\[
\min\left\{ (x - x_i^0)^{-2},  (x - \bar x_i)^{-2} \right\} \leq \che_i(x) \leq \max\left\{ (x - x_i^0)^{-2},  (x - \bar x_i)^{-2} \right\} ,
\]
which implies
\be \label{propti}
\che_i(x) \sim \left( |x-x_i|+ |I_i| \right)^{-2}
\ee
uniformly for $x\in [-1,1]$.

There exists an absolute (positive) constant  $c_*$  such that,
for $\mu, \e_1, \e_2 \in \N_0$ satisfying $\mu \geq c_* \max\{\e_1,\e_2, 1\}$,
\[
T_i(x) := T_i (n, \mu, \e_1, \e_2)(x) := \lambda_i \int_{-1}^x   (y-x_i)^{\e_1} (x_{i-1}-y)^{\e_2} \che_i^\mu(y)\, dy
\]
is a polynomial of degree $(4n-2)\mu + \e_1 +\e_2 +1$, where
\be \label{propt2}
\lambda_i := \left(\int_{-1}^1   (y-x_i)^{\e_1} (x_{i-1}-y)^{\e_2} \che_i^\mu(y)\, dy\right)^{-1} \sim |I_i|^{2\mu-\e_1-\e_2 -1} ,
\ee
and so $T_i(1)=1$ (see \eg \cite[Proposition 2]{k-ca}).

A proof of the following   lemma is the same as that of \cite[Lemmas 6]{k-ca-sim}. It is based on \ineq{propti}, \ineq{propt2}, the observation that
\[
|T_i(x)- \chi_i(x)| = \left|\int_{-1}^x  T_i'(u) du \right| , \quad x < x_i,
\]
\[
|T_i(x)- \chi_i(x)|  = \left|\int_x^1 T_i'(u) du\right| , \quad x > x_i,
\]
and the Dzyadyk inequality (see \eg \cite[Theorem 3, p. 262]{dz})
\be \label{dzyad}
\norm{\dn^{s+\nu} P_n^{(\nu)}}{\infty} \leq c(s,\nu)  \norm{\dn^{s} P_n}{\infty} , \quad P_n \in\Poly_n \andd s\in\R .
\ee

\begin{lemma} \label{lem5.1}
Let $1\leq i \leq n$, and let $\nu_0, \mu,\e_1, \e_2 \in\N_0$ be such that $\mu \geq c_* \max\{\nu_0,\e_1,\e_2,1\}$, where $c_*$ is some sufficiently large absolute (positive) constant. Then the polynomial
$T_i = T_{i} (n, \mu, \e_1, \e_2)$ of degree $\leq c(\mu) n$ satisfies the following inequalities for all $x\in [-1,1]$:
\[
\left| T_{i} (x) -   \chi_i(x) \right| \leq c   \psi_i(x)^\mu
\]
and
\[
\left| T_{i}^{(\nu)} (x)  \right| \leq c  |I_i|^{-\nu}   \psi_i(x)^\mu , \quad 0\leq \nu \leq \nu_0 ,
\]
where constants $c$ depend only on $\mu$.
\end{lemma}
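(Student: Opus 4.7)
I plan to follow the hint sketched immediately before the lemma: derive a pointwise bound on $T_i'$ from its explicit integrand, integrate from a boundary point to obtain the first inequality, and then use \ineq{dzyad} to pass to higher derivatives.

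Step 1: From $T_i'(u)=\lambda_i(u-x_i)^{\e_1}(x_{i-1}-u)^{\e_2}\che_i^\mu(u)$, combined with \ineq{propti}, \ineq{propt2}, and the elementary bounds $|u-x_i|,|x_{i-1}-u|\le 2(|u-x_i|+|I_i|)$, I obtain the pointwise estimate
\[
|T_i'(u)|\le c\,|I_i|^{-1}\,\psi_i(u)^{2\mu-\e_1-\e_2}.
\]
Once $c_*$ is chosen large enough, the exponent satisfies $2\mu-\e_1-\e_2-1\ge\mu$, which is the slack that drives both the integration in step 2 and the derivative estimates in step 3.

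Step 2: Since $T_i(-1)=0$ and $T_i(1)=1$, while $\chi_i\equiv 0$ on $[-1,x_i)$ and $\chi_i\equiv 1$ on $(x_i,1]$, the identities $T_i(x)-\chi_i(x)=\int_{-1}^xT_i'$ (for $x\le x_i$) and $T_i(x)-\chi_i(x)=-\int_x^1T_i'$ (for $x\ge x_i$) apply. Using the elementary estimate $\int(|u-x_i|+|I_i|)^{-k}\,du\le c(|x-x_i|+|I_i|)^{-(k-1)}$ valid for $k>1$, each integral is bounded by $c|I_i|^{-1}\cdot|I_i|\,\psi_i(x)^{2\mu-\e_1-\e_2-1}$, which is $\le c\,\psi_i(x)^\mu$ because $\psi_i(x)\le 1$.

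Step 3: For $1\le\nu\le\nu_0$ I apply the pointwise/localized form of Dzyadyk's inequality \ineq{dzyad} to the polynomial $T_i'\in\Poly_{c(\mu)n}$, iterating $\nu-1$ derivatives, to obtain
\[
|T_i^{(\nu)}(x)|\le c\,\dn(x)^{-(\nu-1)}\sup_{|y-x|\le A\dn(x)}|T_i'(y)|\le c\,\dn(x)^{-(\nu-1)}|I_i|^{-1}\psi_i(x)^{2\mu-\e_1-\e_2},
\]
where the localization over a window of radius $\sim\dn(x)$, on which $\psi_i$ is essentially constant, lets me insert the step 1 bound on the right-hand side. A direct computation using $|I_i|\sim\varphi(x_i)/n+n^{-2}$ shows $|I_i|/\dn(x)\le c/\psi_i(x)$, so $(|I_i|/\dn(x))^{\nu-1}\le c\,\psi_i(x)^{-(\nu-1)}$; combined with $2\mu-\e_1-\e_2-(\nu-1)\ge\mu$ (which holds once $c_*$ is large, since $\nu\le\nu_0$ and $\e_j\le\mu/c_*$), this converts the above into $c\,|I_i|^{-\nu}\psi_i(x)^\mu$.

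The main obstacle is step 3: the version of Dzyadyk's inequality quoted in \ineq{dzyad} is a global norm bound, whereas what I really need is its pointwise form, together with control of how $\dn(x)$ differs from $|I_i|$ away from $I_i$. This is exactly why the exponent on $\psi_i$ in the bound on $T_i'$ comes out to $2\mu-\e_1-\e_2$ rather than merely $\mu$: the surplus decay is precisely what absorbs the factor $(|I_i|/\dn(x))^{\nu-1}$, making the smallness condition $\mu\ge c_*\max\{\nu_0,\e_1,\e_2,1\}$ essential.
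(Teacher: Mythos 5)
Your Steps 1 and 2 are correct and follow exactly the route the paper intends (the paper gives no self-contained proof, deferring to Lemma 6 of \cite{k-ca-sim}, but lists precisely the ingredients you use: \ineq{propti}, \ineq{propt2}, the integral representation of $T_i-\chi_i$, and a Dzyadyk-type inequality). The gap is in Step 3. The inequality you invoke,
\[
|T_i^{(\nu)}(x)|\le c\,\dn(x)^{-(\nu-1)}\sup_{|y-x|\le A\dn(x)}|T_i'(y)| ,
\]
read as a statement about an arbitrary polynomial of degree $\le c(\mu)n$, is false. Take $x=1$, so $\dn(1)=n^{-2}$, let $J:=[1-An^{-2},1]$, and let $Q$ be the Chebyshev polynomial of degree $n$ of the interval $J$: then $\sup_{J}|Q|=1$ while $|Q'(1)|=2n^{2}/|J|=2n^{4}/A$, which exceeds $\dn(1)^{-1}\sup_{J}|Q|=n^{2}$ by the unbounded factor $2n^{2}/A$; the same phenomenon occurs at interior points. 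Local information on a window of length $\sim\dn(x)$ can only yield the Markov factor $(\deg)^{2}|J|^{-1}$ per derivative, never the Bernstein factor $\dn(x)^{-1}$.

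What actually closes the argument is the \emph{pointwise} Dzyadyk inequality, whose hypothesis is a global majorant rather than a local sup: if $P\in\Poly_N$ satisfies $|P(u)|\le\left(|u-x_i|+\rho_N(x_i)\right)^{-K}$ for \emph{all} $u\in[-1,1]$, then $|P^{(k)}(x)|\le c(K,k)\left(|x-x_i|+\rho_N(x_i)\right)^{-K}\rho_N(x)^{-k}$. Your Step 1 supplies exactly such a global majorant for $T_i'$, since $\psi_i(u)^{K}=|I_i|^{K}\left(|u-x_i|+|I_i|\right)^{-K}$ and $|I_i|\sim\dn(x_i)$; feeding it into this inequality gives precisely your displayed intermediate bound $|T_i^{(\nu)}(x)|\le c\,\dn(x)^{-(\nu-1)}|I_i|^{-1}\psi_i(x)^{2\mu-\e_1-\e_2}$. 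From there your bookkeeping is sound: $|I_i|/\dn(x)\le c\,\psi_i(x)^{-1}$ does follow from $\dn(x_i)^{2}\le 4\dn(x)\left(|x-x_i|+\dn(x)\right)\le c\,\dn(x)\left(|x-x_i|+|I_i|\right)$, and the exponent condition $2\mu-\e_1-\e_2-(\nu-1)\ge\mu$ holds once $c_*\ge 3$. So the final estimate is recoverable, but the lemma you used to reach it must be replaced: you need the global-majorant pointwise form of Dzyadyk's inequality, which is a genuinely different statement from the local-sup Bernstein inequality you wrote down, and it cannot be obtained by "localizing" \ineq{dzyad} over a window of radius $\sim\dn(x)$.
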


We   note that by choosing $\e_1, \e_2$ to be $0$ or $1$ we can make polynomials $T_{i} (n, \mu, \e_1, \e_2)$ lie either above $\chi_{i-1}$ or below $\chi_i$. Indeed, recalling that $T_i(-1)=0$ and $T_i(1)=1$,
inequalities $T_{i}' (n, \mu, 1, 0)(x) (x-x_i) \geq 0$ and $T_{i}' (n, \mu, 0, 1)(x) (x_{i-1}-x) \geq 0$ immediately imply that, for all $1\leq i\leq n$,
\be \label{abovebelow}
  T_{i}  (n, \mu, 1, 0)(x) \leq \chi_i(x) \andd    T_{i}  (n, \mu, 0, 1)(x) \geq \chi_{i-1}(x)   , \quad  x\in [-1,1] .
\ee

\sect{Polynomial approximation of $w_n^{1/p}$ for $0<p<\infty$} \label{polyapprox}

\begin{theorem} \label{thm3.2}
Suppose that $w$ is a doubling weight.
For every $0<p<\infty$,  $n\in\N$ and $\nu_0\in\N$, there exists a polynomial $\Q_n \in \Poly_n$ such that, for all $x\in [-1,1]$,
\be \label{pol1}
 c w_n(x)^{1/p} \leq \Q_n (x)  \leq c w_n(x)^{1/p}
\ee
and
%\[
%\left| \left( \varphi(x) +1/n \right)^\nu \Q_n^{(\nu)}(x) \right| \leq C n^\nu w_n(x)^{1/p}, \quad 1\leq \nu \leq \nu_0 .
%\]
\be \label{pol2}
\left| \dnx^\nu   \Q_n^{(\nu)}(x) \right| \leq c   w_n(x)^{1/p}, \quad 1\leq \nu \leq \nu_0 ,
\ee
where constants $c$ depend only on $\nu_0$, $p$ and  the doubling constant of $w$.
\end{theorem}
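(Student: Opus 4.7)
The plan is to construct $\Q_n$ as a nonnegative weighted sum of polynomial bumps localized at the Chebyshev nodes $x_i$, with coefficients $w_n(x_i)^{1/p}$. Fix a large integer $\mu$ (to be chosen depending on $p$, $\nu_0$, and the doubling exponent $s$ appearing in \ineq{ineq1.1}), set $m := \lfloor (n-1)/(4\mu) \rfloor$, and work with the Chebyshev partition corresponding to $m$. For $1 \leq i \leq m$, define
\[
B_i(x) := c_0\, |I_i|^{2\mu}\, \che_i^{\mu}(x),
\]
where $c_0 > 0$ is a fixed constant chosen so that $B_i(x) \geq 1$ for $x \in I_i$. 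By \ineq{propti}, $B_i$ is a nonnegative polynomial of degree $(4m-2)\mu$ satisfying $B_i(x) \sim \psi_i(x)^{2\mu}$ uniformly on $[-1,1]$, and by the normalization \ineq{propt2} it agrees (up to a constant) with $|I_i|\, T_i'$, where $T_i := T_i(m,\mu,0,0)$; thus \lem{lem5.1} yields the pointwise derivative estimate
\[
|B_i^{(\nu)}(x)| \leq c\, |I_i|^{-\nu}\, \psi_i(x)^{\mu}, \qquad 0 \leq \nu \leq \nu_0.
\]

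I then set
\[
\Q_n(x) := \sum_{i=1}^{m} w_m(x_i)^{1/p}\, B_i(x),
\]
a nonnegative polynomial of degree $\leq n-1$. Since $m \sim n$, we have $w_m \sim w_n$ (as noted after \ineq{msq}), so it suffices to prove \ineq{pol1} and \ineq{pol2} with $w_m$ in place of $w_n$. For the upper bound in \ineq{pol1}, combining $B_i(x) \leq c\, \psi_i(x)^{2\mu}$ with \ineq{ineq1.3} in the form $w_m(x_i)^{1/p} \leq c\, \psi_i(x)^{-s/p} w_m(x)^{1/p}$ gives
\[
\Q_n(x) \leq c\, w_m(x)^{1/p} \sum_{i=1}^{m} \psi_i(x)^{2\mu - s/p},
\]
and the sum is uniformly bounded for $2\mu > s/p + 1$ via the standard Chebyshev-partition estimate $\sum_i \psi_i(x)^\alpha \leq c_\alpha$, valid for any $\alpha > 1$. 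The lower bound is immediate: for the unique $i_0$ with $x \in I_{i_0}$, $B_{i_0}(x) \geq 1$ and $w_m(x_{i_0}) \sim w_m(x)$ by \ineq{ineq1.2}; the remaining terms contribute nonnegatively.

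For the derivative bound \ineq{pol2}, I differentiate termwise and combine the derivative estimate on $B_i$ with the elementary Chebyshev-partition inequality $\dnx \leq c\,(|I_i| + |x-x_i|) = c\,|I_i|/\psi_i(x)$ (valid for all $x \in [-1,1]$ and $1 \leq i \leq m$, and obtained from $|\varphi(x)-\varphi(x_i)|(\varphi(x)+\varphi(x_i)) \leq 2|x-x_i|$). This gives
\[
\dnx^\nu |\Q_n^{(\nu)}(x)| \leq c \sum_{i=1}^{m} w_m(x_i)^{1/p}\, \psi_i(x)^{\mu - \nu} \leq c\, w_m(x)^{1/p} \sum_{i=1}^{m} \psi_i(x)^{\mu - \nu - s/p},
\]
and the final sum is bounded provided $\mu > \nu_0 + s/p + 1$; this (together with the requirement $\mu \geq c_* \max\{\nu_0+1, 1\}$ needed for \lem{lem5.1} to apply) fixes the choice of $\mu$.

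The main obstacle is the pointwise derivative estimate on $B_i$ with the correct $\psi_i^{\mu}$ decay at every $x$, not merely the uniform $L_\infty$ bound that naive differentiation of $\che_i^\mu$ would produce. This decay is precisely what \lem{lem5.1} provides, since it is derived from the integrated form $T_i = \lambda_i \int \che_i^\mu$ coupled with Dzyadyk's pointwise inequality \ineq{dzyad}. Once that estimate is in hand, the rest is a balancing act among three competing exponents---the bump decay $\mu$, the weight blow-up $s/p$ coming from \ineq{ineq1.3}, and the derivative order $\nu$---played off against the summability threshold $\sum_i \psi_i^\alpha < \infty$ for $\alpha > 1$, and all are absorbed by taking $\mu$ sufficiently large.
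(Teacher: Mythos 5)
Your proof is correct, but it takes a genuinely different route from the paper's. The paper first builds the piecewise\-/constant majorant $S_n\big|_{I_i} := \sup_{u\in I_i} w_n(u)^{1/p}$, writes it as a telescoping sum $s_n + \sum_i (s_i-s_{i+1})\chi_i$, and replaces each $\chi_i$ by a \emph{one-sided} smoothed step $T_i(n,\mu,1,0)$ or $T_{i+1}(n,\mu,0,1)$ (chosen according to the sign of $s_i-s_{i+1}$, using \ineq{abovebelow}) so that $\Q_n\geq S_n\geq w_n^{1/p}$ comes for free; the upper bound and \ineq{pol2} then follow from \lem{lem5.1} and the summability of $\psi_i^{\alpha}$. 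You instead sum nonnegative bumps $|I_i|^{2\mu}\che_i^{\mu}\sim\psi_i^{2\mu}$ weighted by the node values $w_m(x_i)^{1/p}$: the lower bound is then immediate from nonnegativity plus the single dominant bump at $x\in I_{i_0}$, which lets you dispense with the sign-dependent choice of one-sided polynomials entirely, while the upper bound and the derivative estimate go through the same machinery (\ineq{ineq1.3}, the $\psi_i^{\mu}$ decay of $T_i^{(\nu+1)}$ from \lem{lem5.1} applied with $\nu_0+1$, and the comparison $\dnx\leq c(|x-x_i|+|I_i|)$, which is a correct, slightly cruder form of the paper's $\dnx^2\leq 4\dn(x_i)(|x-x_i|+\dn(x_i))$). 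The one loose end is the range $n\leq 4\mu$, where your $m=\lfloor(n-1)/(4\mu)\rfloor$ vanishes and the sum is empty; as in the paper, this is disposed of in one line by taking $\Q_n$ to be the constant $w_1(0)^{1/p}$ and using $w_n\sim w_1$ for bounded $n$, but you should say so. Your approach buys a cleaner lower bound and a manifestly nonnegative $\Q_n$; the paper's buys the sharper interpolation-type property $\Q_n\geq S_n$ pointwise, which is what the one-sided polynomials are designed for.
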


Note that, in the case $\nu_0 =1$, \thm{thm3.2} was proved in \cite[(7.34)-(7.36)]{mt2000}.

\begin{proof}
Let $S_n(x)$ be a piecewise constant function such that
\[
S_n(x) = s_i := \sup_{u \in I_i} w_n(u)^{1/p} , \quad x\in I_i, \quad 1\leq i\leq n .
\]
Note that \ineq{ineq1.2} implies that $w_n(x)^{1/p} \leq  s_i \leq c w_n(x)^{1/p}$, for all $x\in I_i$, $1\leq i\leq n$, and so
\[
w_n(x)^{1/p} \leq S_n(x) \leq c w_n(x)^{1/p}, \quad x\in [-1,1] .
\]
We observe that
\[
S_n(x) := s_n   + \sum_{i=1}^{n-1} \left( s_i - s_{i+1} \right) \chi_i(x) , \quad x\in [-1,1],
\]
and define
\[
\Q_n(x) := s_n   + \sum_{i=1}^{n-1} \left( s_i - s_{i+1} \right) R_i(x) ,
\]
where, for each $1\leq i\leq n-1$, the polynomial $R_i$ is defined as follows
\[
R_i(x) :=
\begin{cases}
T_{i+1}(n,\mu, 0, 1) , & \mbox{\rm if} \; s_i - s_{i+1} \geq 0, \\
T_{i}(n,\mu, 1, 0) , & \mbox{\rm otherwise}, \\
\end{cases}
\]
where $\mu \in \N$ is sufficiently large (to be prescribed).
Then \ineq{abovebelow} yields
 \[
%w_n(x)^{1/p} \leq
S_n(x) \leq \Q_n(x)    , \quad x\in [-1,1] ,
 \]
 which implies the left-hand   inequality in \ineq{pol1}.
Now, for each $x\in [-1,1]$, using \ineq{ineq1.3} and the fact that $\psi_{i+1} \sim \psi_i$, $1\leq i\leq n-1$,     we have
\begin{eqnarray*}
\left|\Q_n(x) -S_n(x) \right| & \leq & \sum_{i=1}^{n-1} |s_i - s_{i+1}| \cdot |R_i(x) - \chi_i(x)|
  \leq   c
\sum_{i=1}^{n-1} w_n(x_i)^{1/p}  \psi_i(x)^\mu \\
& \leq & c
w_n(x)^{1/p}  \sum_{i=1}^{n-1}  \psi_i(x)^{\mu-s/p}  \leq c w_n(x)^{1/p} ,
\end{eqnarray*}
since $\sum_{i=1}^{n-1} \psi_i(x)^{\mu-s/p} \leq c$    if $\mu-s/p \geq  2$.
Therefore,
\[
\Q_n(x) \leq S_n(x) + c w_n(x)^{1/p} \leq c w_n(x)^{1/p} ,
\]
which is the right-hand inequality  in \ineq{pol1}.

Now, recalling that $|I_i| \sim \dn(x_i)$, $1\leq i \leq n$, and using the inequality $\dnx^2 \leq 4 \dn(y) \left( |x-y|+ \dn(y)\right)$ as well as \ineq{ineq1.3} we have, for all $1\leq \nu\leq \nu_0$,
\begin{eqnarray*}
\left| \dnx^\nu   \Q_n^{(\nu)}(x) \right| &\leq& \sum_{i=1}^{n-1} \dnx^\nu  \left| s_i - s_{i+1} \right| \cdot |R_i^{(\nu)}(x)|
  \leq
c \sum_{i=1}^{n-1} \dnx^\nu  w_n(x_i)^{1/p} |I_i|^{-\nu} \psi_i(x)^\mu \\
& \leq &
c w_n(x)^{1/p} \sum_{i=1}^{n-1}   \left[ \dn(x_i) \left(|x-x_i|+\dn(x_i) \right) \right]^{\nu/2}      |I_i|^{-\nu} \psi_i(x)^{\mu-s/p} \\
& \leq &
c w_n(x)^{1/p} \sum_{i=1}^{n-1}    \psi_i(x)^{\mu-s/p-\nu/2}  \leq c w_n(x)^{1/p} ,
\end{eqnarray*}
provided $\mu-s/p-\nu/2 \geq 2$. Hence, we choose $\mu$ to be such that all conditions of \lem{lem5.1} are satisfied, and also $\mu \geq \nu_0/2 +s/p +2$.
Finally, we note that we actually constructed a polynomial $\Q_n$ of degree $\leq c(\mu)n$ that satisfies inequalities \ineq{pol1} and \ineq{pol2}.
Since $w_n(x) \sim w_m(x)$ and $\dn(x) \sim \dm(x)$ if $n\sim m$, this completes the proof for   $n\geq n_0$, for some $n_0\in\N$. For $1\leq n\leq n_0$, the statement of the theorem follows from the case $n=1$ (by setting
$\Q_1(x) :=  w_1(0)^{1/p}$, for example).
%$\Q_1(x) := \sup_{u\in [-1,1]} w_1(u)^{1/p}$, for example).
\end{proof}

 \sect{Weighted polynomial approximation: Jackson type estimates} \label{jacksonsection}

\subsection{Auxiliary results}

First, we recall the well known Whitney's theorem (see \eg \cite[Theorem 7.1, p. 195]{pp}) that states that, if  $0<p\leq \infty$, $f\in\Lp[a,b]$ and $ r\in\N$, then
\be \label{whitney}
E_{r}(f, [a,b])_p := \inf_{P_{r} \in\Poly_r} \norm{f-P_r}{\Lp[a,b]} \leq c \w_r\left( f, (b-a)/r , [a,b]\right)_p ,
\ee
where $\w_r\left( f, t, [a,b] \right)_p$ is the usual $r$th modulus of smoothness in the $\Lp$ (quasi)norm.

We also define
 the   averaged weighted modulus by
\begin{eqnarray*}
\tilde\w_\varphi^r(f, t)_{p, w} &:= & \left(\frac{1}{t}  \int_0^{t}  \int_{-1}^1       w (x) |\Delta_{h\varphi(x)}^r(f, x)|^p dx dh \right)^{1/p} \\
&=& \left(\frac{1}{t}  \int_0^{t}  \norm{ \Delta_{h\varphi}^r(f)}{p, w}^p  dh \right)^{1/p}  ,     \quad 0<p< \infty,
\end{eqnarray*}
and for convenience denote $\tilde\w_\varphi^r(f, t)_{\infty, w} := \w_\varphi^r(f, t)_{\infty, w}$.

Note that it is clear from the definition that
\[
\tilde\w_\varphi^r(f, t)_{p, w} \leq  \w_\varphi^r(f, t)_{p, w} , \quad 0<p<\infty .
\]

\begin{lemma}\label{lem4.1nnn} %\label{lemweighted}
 For a doubling weight $w$,  $f\in\Lp[-1,1]$, $0<p < \infty$,   $n,r\in\N$, and any $0<\theta<1$  the following holds
 \[
 \sum_{i=1}^n w_n(x_i)\w_r(f, |J_i|, J_i)_p^p \leq c \tilde \w_\varphi^r (f, \theta/ n)_{p, w_n}^p  \leq c \w_\varphi^r (f, \theta/ n)_{p, w_n}^p     ,
 \]
 where, for every $i$, $I_i\subset J_i \subset [-1,1] $
 % $[-1,1] \supset J_i \supset I_i$
  and $|J_i| \leq c_0 |I_i|$, and the constant $c$ depends only on $r$, $p$, $c_0$, $\theta$, and the doubling constant of $w$.
 \end{lemma}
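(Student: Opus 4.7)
I plan to prove the two inequalities separately. The second, $\tilde\w_\varphi^r(f,\theta/n)_{p,w_n}^p \leq \w_\varphi^r(f,\theta/n)_{p,w_n}^p$, is immediate: for every $h\in(0,\theta/n]$ the quantity $\|\Delta_{h\varphi}^r f\|_{p,w_n}^p$ is bounded by the supremum $\w_\varphi^r(f,\theta/n)_{p,w_n}^p$, and the same bound survives averaging in $h$.

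The main inequality I would approach in four moves. First, using the classical scaling estimate $\w_r(f,\lambda t, I)_p \leq c(r,p)\max\{1,\lambda\}^{r/\lambda_p}\w_r(f,t,I)_p$ with $\lambda_p=\min\{p,1\}$, I would replace $|J_i|$ by $\theta |I_i|$ at the cost of a constant depending on $c_0/\theta$, reducing each summand to $w_n(x_i)\w_r(f,\theta|I_i|,J_i)_p^p$. Second, invoking the standard equivalence between the $r$th modulus and its averaged analog (Timan / Ditzian--Totik type, valid for $0<p\leq\infty$ with $p$th powers), I would bound
\[
\w_r(f,\theta|I_i|,J_i)_p^p \;\leq\; \frac{c}{\theta|I_i|} \int_0^{c'\theta|I_i|} \|\Delta_h^r(f,\cdot,J_i)\|_{L_p(J_i^{rh})}^p \, dh .
\]
Third, I would cover each $J_i$ by a bounded number (depending only on $c_0$) of adjacent Chebyshev subintervals $I_j$, using the pointwise inequality $|\Delta_h^r(f,x,J_i)|\leq |\Delta_h^r(f,x,[-1,1])|$ to replace the integral over $J_i^{rh}$ by a sum of integrals over such $I_j$'s. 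Fourth, on each interior $I_j$ the relations $\varphi(x)\sim\varphi(x_j)\sim n|I_j|$ and $w_n(x)\sim w_n(x_j)$ hold, so substituting $h=u\varphi(x)$ (Jacobian $1/\varphi(x)$) transforms the step $h$ of size $\lesssim \theta|I_j|$ into a Ditzian--Totik step $u\varphi(x)$ with $u\in(0,c''\theta/n]$. Swapping the summation order (each $I_j$ is a neighbor of only $O_{c_0}(1)$ many $J_i$'s) and reassembling, I recover
\[
\sum_i w_n(x_i)\w_r(f,|J_i|,J_i)_p^p \;\leq\; c\,\tilde\w_\varphi^r(f, c''\theta/n)_{p,w_n}^p ,
\]
and if $c''>1$ I apply the argument with $\theta/c''$ in place of $\theta$, absorbing $c''$ into the $\theta$-dependent constant.

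The principal obstacle is the sup-to-average equivalence in the second step: for $1\leq p\leq\infty$ this is classical, but for $0<p<1$ a direct averaging argument is subtle and I would instead use a local realization functional on $J_i$ (equivalent to $\w_r(f,\cdot,J_i)_p$) combined with an explicit integral representation of $\Delta_h^r$. A second delicate point is the endpoint intervals $I_1$ and $I_n$, where $\varphi$ degenerates, $\varphi(x)\sim\varphi(x_i)$ may fail throughout $J_i$, and $\Delta_{h\varphi(x)}^r(f,x)$ can vanish because the step leaves $[-1,1]$; here I would exploit $|I_1|,|I_n|\sim n^{-2}$, the doubling property of $w_n$, and the fact that $\rho_n(x)\sim\rho_n(x_i)$ to treat these two intervals separately, showing their contribution to the sum is already dominated by the first few terms of $\tilde\w_\varphi^r(f,\theta/n)_{p,w_n}^p$.
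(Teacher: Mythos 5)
Your plan is correct and follows essentially the same route as the paper: rescale $|J_i|$ to a small multiple of $|I_i|$, pass from the sup-modulus to an averaged form, substitute $h=u\varphi(x)$ on interior intervals, use the vanishing of $\Delta^r_{h\varphi(x)}(f,x)$ near $\pm1$ to control the endpoint intervals, and reassemble via bounded overlap of the $J_i$. Your main worry --- the sup-to-average step for $0<p<1$ --- is unnecessary: the paper simply invokes \cite[Lemma 7.2, p.~191]{pp}, which gives $\w_r(f,t,[a,b])_p^p \leq \frac{c}{t}\int_0^t\int_a^b |\Delta_h^r(f,x,[a,b])|^p\,dx\,dh$ for all $0<p<\infty$, so no realization-functional detour is needed.
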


%Since it is clear that
%\[
%\tilde\w_\varphi^r(f, t)_{p, w} \leq  \w_\varphi^r(f, t)_{p, w} , \quad 0<p<\infty ,
%\]
% we immediately get the following corollary.

% \begin{corollary}  \label{oldlem4.1}   %\label{lem4.1}
% For a doubling weight $w$,  $f\in\Lp[-1,1]$, $0<p < \infty$,   $n,r\in\N$, and $0<\theta<1$  the following holds
% \[
% \sum_{i=1}^n w_n(x_i)\w_r(f, |J_i|, J_i)_p^p \leq c \w_\varphi^r (f, \theta/ n)_{p, w_n}^p ,
% \]
% where, for every $i$, $I_i\subset J_i \subset [-1,1] $
% % $[-1,1] \supset J_i \supset I_i$
%  and $|J_i| \leq c_0 |I_i|$, and the constant $c$ depends only on $r$, $p$, $c_0$, $\theta$, and the doubling constant of $w$.
% \end{corollary}

%\begin{remark}

We remark that the reason for introducing $\theta$ is that we have NOT proved the estimate
\[
\w_\varphi^r(f, \lambda/n)_{p, w_n} \leq c \w_\varphi^r(f, 1/n)_{p, w_n} , \quad p>0.
\]

%\end{remark}

\begin{proof} The proof of this lemma is rather standard and   not different from that for unweighted moduli (see \eg \cite{dly}). The main idea is the employment of the inequality (see \cite[Lemma 7.2, p. 191]{pp})
\be \label{pp-ineq}
\w_r(f, t, [a,b])_p^p \leq {c \over t} \int_0^t \int_a^b |\Delta_h^r(f, x, [a,b])|^p dx\, dh , \quad 0<p<\infty .
\ee
%.)

Note that if $J_i \supset I_i$ and $|J_i| \leq c_0 |I_i|$, then there exists $m\in\N$ depending only on $c_0$ such that $J_i$   has nonempty intersection with at most $m$ intervals $I_j$, $1\leq j\leq n$.
Since $|I_i| \sim |I_{i\pm 1}| \sim \dn(x_i)$, this implies that
 $\dnx \sim \dn(y)\sim |I_i|$ for all $x,y \in J_i$, and so $|x-y| \leq c \dnx$, for all $x,y\in J_i$.

Taking this into account and  using \ineq{pp-ineq} and \ineq{ineq1.2} we have
\begin{eqnarray*}
w_n(x_i) \w_r(f, |J_i|, J_i)_p^p &\leq& c w_n(x_i) \w_r(f, c^*|I_i| , J_i)_p^p \\
& \leq &
  c |I_i|^{-1} \int_0^{c^*|I_i| } \int_{J_i} w_n(x_i) |\Delta_h^r(f, x, J_i)|^p dx\, dh \\
&\leq&  c  \int_{J_i}  \int_0^{c^*|I_i|/\varphi(x)}  {\varphi(x) \over |I_i|}  w_n(x) |\Delta_{h\varphi(x)}^r(f, x, J_i)|^p dh \, dx ,
\end{eqnarray*}
where $0<c^* < 1$ is a constant that we will choose later.

Now,    $|I_i| \sim \dn(x) \sim \varphi(x)/n$ for $x\in J_i$, $i \in J^*$, where
\[
J^* := \left\{ 1\leq i\leq n \st  J_i \cap (I_1 \cup I_n) = \emptyset \right\} ,
\]
 and so, for $i \in J^*$, taking into account that $c^* \leq \sqrt{c^*}$ (it is a red herring for now, but  is needed because of the estimate for $i\not\in J^*$ below),  we have
 \be \label{newaux}
w_n(x_i) \w_r(f, |J_i|, J_i)_p^p \leq c n  \int_{J_i}  \int_0^{c_1 \sqrt{c^*}/n}    w_n(x) |\Delta_{h\varphi(x)}^r(f, x, J_i)|^p dh dx .
 \ee
Suppose now that $i \not\in J^*$. We recall that    $\Delta_h^r(f, x, J_i)$ is defined to be $0$ if $x\pm rh/2 \not\in J_i$ and, in particular,
$ \Delta_{h\varphi(x)}^r(f, x, J_i) = 0 $ if  $1-|x| < rh\varphi(x)/2$. Therefore, recalling that $\varphi(x)/|I_i| \leq c n \dnx/|I_i| \leq cn$, $x\in J_i$,  for each fixed $x\in J_i$, we have
\[
\int_0^{c^*|I_i|/\varphi(x)}  {\varphi(x) \over |I_i|}  w_n(x) |\Delta_{h\varphi(x)}^r(f, x, J_i)|^p dh \leq
cn \int_S w_n(x) |\Delta_{h\varphi(x)}^r(f, x, J_i)|^p dh ,
\]
where
\begin{eqnarray*}
S &:=& \left\{ h \st 0<h \leq \min\left\{ {c^*|I_i| \over  \varphi(x)},  {2(1-|x|) \over r \varphi(x)}  \right\} \right\}\\
& \subset&
\left\{ h \st 0<h \leq c_2 \min\left\{ {c^* \over  n^2  \sqrt{ 1-|x|} },  \sqrt{ 1-|x|}   \right\} \right\}
\subset
\left\{ h \st 0<h \leq c_2 \sqrt{c^*}/n \right\} .
\end{eqnarray*}
Therefore, \ineq{newaux} is valid for $i\not\in J^*$ as well (with $c_2$ instead of $c_1$). We now choose $c^*$ to be such that $\max\{c_1, c_2\} \sqrt{c^*} < \theta$.   Then
\begin{eqnarray*}
 \sum_{i=1}^n w_n(x_i)\w_r(f, |J_i|, J_i)_p^p  & \leq &
c n  \sum_{i=1}^n  \int_{J_i}  \int_0^{\theta/n}    w_n(x) |\Delta_{h\varphi(x)}^r(f, x, J_i)|^p dh dx \\
& \leq &
c n  \sum_{i=1}^n  \int_{I_i}  \int_0^{\theta/n}    w_n(x) |\Delta_{h\varphi(x)}^r(f, x)|^p dh dx \\
& \leq &
c n   \int_0^{\theta/n}  \int_{-1}^1       w_n(x) |\Delta_{h\varphi(x)}^r(f, x)|^p  dx dh \\
& \leq &
c \tilde \w_\varphi^r (f, \theta/n)_{p, w_n}^p,
\end{eqnarray*}
and the proof is complete.
\end{proof}

An analog of \lem{lem4.1nnn} in the case $p=\infty$ is the following %rather simple
result.

\begin{lemma} \label{lemmainf}
For a doubling weight $w$,  $f\in\L_\infty[-1,1]$,    $n,r\in\N$, and any $0<\theta<1$  the following holds
 \[
 \sup_{1\leq i \leq n}  w_n(x_i)\w_r(f, |J_i|, J_i)_\infty  \leq c \w_\varphi^r (f, \theta/ n)_{\infty, w_n}  ,
 \]
 where, for every $i$, $I_i\subset J_i \subset [-1,1] $
 % $[-1,1] \supset J_i \supset I_i$
 is such that $|J_i| \leq c_0 |I_i|$, and the constant $c$ depends only on $r$,  $c_0$, $\theta$, and the doubling constant of $w$.
\end{lemma}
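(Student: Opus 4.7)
The plan is to follow the outline of the proof of \lem{lem4.1nnn}, but to replace the $L_p$-averaging over $h$ by a subdivision identity for symmetric differences, since the $\L_\infty$ modulus admits no averaging. Fix $i$ and use
\[
\w_r(f, |J_i|, J_i)_\infty = \sup\{|\Delta_h^r f(x)| : 0 < h \leq |J_i|, \ x \pm rh/2 \in J_i\}.
\]
Parallel to \lem{lem4.1nnn}, for every admissible pair $(h, x)$ I would first show $h \leq C\varphi(x)/n$ where $C = C(c_0, r)$: for interior indices $i \in J^*$, $\varphi(x) \sim n|I_i|$ gives $h \leq c_0|I_i| \leq C\varphi(x)/n$; for boundary indices the constraint $x \pm rh/2 \in [-1,1]$ forces $h \leq 2(1-|x|)/r$, and combined with $1-|x| \leq c/n^2$ and $\varphi(x) \sim \sqrt{1-|x|}$ this again yields $h \leq C\varphi(x)/n$.

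Since the scaling identity $\w_\varphi^r(f,\lambda/n)_{\infty,w_n} \leq c\,\w_\varphi^r(f,1/n)_{\infty,w_n}$ is not available, I would subdivide: for an integer $K\geq 1$, the operator identity $\Delta_h = \Delta_{h/K} \sum_{j=0}^{K-1} T^{(K-1-2j)h/(2K)}$ (with $T^b f(x) := f(x+b)$) raised to the $r$-th power gives
\[
\Delta_h^r f(x) = \sum_{|\ell| \leq (K-1)r} a_\ell^{(K)}\, \Delta_{h/K}^r f(y_\ell), \qquad y_\ell := x + \ell h/(2K),
\]
with $\sum_\ell |a_\ell^{(K)}| = K^r$. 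Since $|y_\ell - x| \leq (1-1/K)rh/2 < rh/2$, each $y_\ell \in J_i$ with $|y_\ell - x_i| \leq c|I_i|$, so \ineq{ineq1.2} yields $w_n(y_\ell) \sim w_n(x_i)$.

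Next, I would write $\Delta_{h/K}^r f(y_\ell) = \Delta_{\tilde H_\ell \varphi(y_\ell)}^r f(y_\ell)$ with $\tilde H_\ell := h/(K\varphi(y_\ell))$, and choose $K = K(\theta, C, r)$ large enough so that $\tilde H_\ell \leq \theta/n$ for every $\ell$. In the interior this is immediate from $\varphi(y_\ell) \sim \varphi(x)$ and the bound $h \leq C\varphi(x)/n$; in the boundary regime, $y_\ell$ keeps a distance of at least $rh/(2K)$ from each of $x \pm rh/2$ (and therefore from $\pm 1$), giving $\varphi(y_\ell) \gtrsim \sqrt{h/K}$, whence $\tilde H_\ell \lesssim \sqrt{h/K} \leq c/(n\sqrt{K})$ since $h \leq c/n^2$ near the boundary, and this is $\leq \theta/n$ for $K \geq (c/\theta)^2$. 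Then
\[
w_n(x_i)|\Delta_h^r f(x)| \leq c \sum_\ell |a_\ell^{(K)}|\, w_n(y_\ell)|\Delta_{\tilde H_\ell \varphi(y_\ell)}^r f(y_\ell)| \leq c K^r \w_\varphi^r(f, \theta/n)_{\infty, w_n}.
\]
Taking the supremum over admissible $(h, x)$ and then over $i$ completes the proof, with $c$ depending on $r$, $c_0$, $\theta$, and the doubling constant.

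The main obstacle is the boundary regime, where $\varphi(y_\ell)$ could a priori be much smaller than $\varphi(x)$; the key observation is that the subdivision forces $|y_\ell - x| < rh/2$ strictly, so each $y_\ell$ stays bounded away from the extreme endpoints $x \pm rh/2$, which yields the required lower bound on $\varphi(y_\ell)$.
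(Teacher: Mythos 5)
Your argument is correct and follows essentially the same route as the paper's: the paper shrinks the step by writing $\w_r(f,|J_i|,J_i)_\infty \le c\,\w_r(f,c^*|I_i|,J_i)_\infty$ (the classical scaling property of the $\L_\infty$ modulus, which is proved by exactly the subdivision identity you write out explicitly), then picks a near-extremal pair $(x^*,h^*)$, sets $h=h^*/\varphi(x^*)$, and verifies $h\le\theta/n$ by the same two-case analysis you use, with the endpoint case resting on the same observation that $x^*\pm rh^*/2\in[-1,1]$ forces $\varphi(x^*)\ge\sqrt{h^*/2}$. Your version merely inlines the subdivision, and therefore also has to re-check $w_n(y_\ell)\sim w_n(x_i)$ at the shifted centers via \ineq{ineq1.2}, which you do correctly.
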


\begin{proof}
Let $0<c^*<1/(2r)$ be a constant that we will prescribe later, and let $1\leq i\leq n$, $x^*\in J_i$ and $h^*\in (0, c^*|I_i|]$ (note that   $h^* <1/r$)   be such that
\begin{eqnarray*}
W & := &  \sup_{1\leq j \leq n}  w_n(x_j)\w_r(f, |J_j|, J_j)_\infty =    w_n(x_i)\w_r(f, |J_i|, J_i)_\infty \\
&\leq & c w_n(x_i)\w_r(f, c^*|I_i|, J_i)_\infty \leq c w_n(x_i) |\Delta_{h^*}^r(f, x^*)|.
\end{eqnarray*}
It was shown in the proof of \lem{lem4.1nnn}, that $|x-y| \leq c\dnx$, for all $x,y\in J_i$, and so $w_n(x_i)\sim w_n(x^*)$.

Now, we set $h := h^*/\varphi(x^*)$ and consider two cases: (i) $\varphi(x^*) \geq \theta/(2n)$ and (ii) $\varphi(x^*) < \theta/(2n)$.
In the case (i), $|I_i| \sim \dn(x^*) \sim \varphi(x^*)/n$, and so $h \leq c c^*/n$ for some positive constant $c$, and we can choose $c^*$ so that $h\leq \theta/n$.
In the case (ii), since $x^* \pm rh^*/2 \in [-1,1]$, we conclude that $\varphi(x^*) \geq \sqrt{h^*/2}$, and so $h^* < \theta^2/(2n^2)$. Therefore,
$h \leq \sqrt{2 h^*}\leq \theta/n$.

Hence, for some $0<h\leq \theta/n$,
\[
W \leq c w_n(x^*) \left| \Delta_{h\varphi(x^*)}^r(f, x^*) \right| ,
\]
and so
\[
W \leq   c\sup_{0<h\leq \theta/n} \sup_{x\in [-1,1]} \left| w_n(x)  \Delta_{h\varphi(x)}^r(f, x) \right|
\leq
 c \w_\varphi^r (f, \theta/ n)_{\infty, w_n} .
\]
\end{proof}

\subsection{Jackson type estimate}

\begin{theorem} \label{jacksonthm}
Let $w$ be a doubling weight, $r\in\N$, $0<p\leq \infty$ and $f\in\Lp[-1,1]$. % (if $0<p< \infty$) or $f\in\C[-1,1]$ (if $p=\infty$).
 Then, for every $n \geq r$ and $0<\ccc\leq 1$,  there exists a polynomial $P_n \in\Poly_n$ such that
\[
\norm{f-P_n}{p, w_n}   \leq c \tilde \w_\varphi^r(f, \ccc/n)_{p, w_n} \leq c \w_\varphi^r(f, \ccc/n)_{p, w_n}
\]
and
\[
\norm{ \dn^\nu P_n^{(\nu)}}{p, w_n} \leq c \tilde \w_\varphi^r(f, \ccc/n)_{p, w_n} \leq c \w_\varphi^r(f, \ccc/n)_{p, w_n} ,      \quad r\leq \nu \leq \nu_0,
\]
where constants $c$ depend only on  $r$, $\nu_0$, $p$, $\ccc$ and the doubling constant of $w$.
\end{theorem}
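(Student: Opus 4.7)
My plan is to construct $P_n$ by gluing local Whitney approximations via a polynomial partition of unity derived from \lem{lem5.1}. Let $m := \lfloor n/K\rfloor$ for a constant $K=K(r,\mu)$ that will absorb the degree inflation of the partition, and work on the Chebyshev partition of order $m$. Enlarge each $I_i$ to $J_i := I_{i-1}\cup I_i\cup I_{i+1}$ (with obvious modifications at the endpoints) and apply Whitney's theorem \ineq{whitney} on each $J_i$ to obtain $p_i\in\Poly_r$ with $\norm{f-p_i}{\Lp(J_i)}\le c\,\w_r(f,|J_i|,J_i)_p$. Fix $\mu$ large and set $\tau_i(x):=T_i(m,\mu,0,1)(x)-T_{i+1}(m,\mu,0,1)(x)$ with $T_{m+1}\equiv 0$; by \ineq{abovebelow} and \lem{lem5.1}, $\tau_i$ is a polynomial of degree $\le c(\mu)m\le n-1$, $\sum_i\tau_i\equiv 1$ on $[-1,1]$, $|\tau_i(x)-\chi_{I_i}(x)|\le c(\psi_i(x)^\mu+\psi_{i+1}(x)^\mu)$, and $|\tau_i^{(\nu)}(x)|\le c|I_i|^{-\nu}(\psi_i(x)^\mu+\psi_{i+1}(x)^\mu)$. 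Define $P_n := \sum_{i=1}^m p_i\tau_i\in\Poly_n$.

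For the norm estimate, on $x\in I_j$ use $\sum_i\tau_i\equiv 1$ to split $f-P_n=(f-p_j)-\sum_{i\ne j}(p_i-p_j)\tau_i$. The diagonal piece, by \ineq{ineq1.2} (so $w_n(x)\sim w_n(x_j)$ on $I_j$, using $w_n\sim w_m$), Whitney, and \lem{lem4.1nnn} (or \lem{lemmainf} when $p=\infty$) applied with $\theta=\ccc$, is bounded by $c\,\tilde\w_\varphi^r(f,\ccc/n)_{p,w_n}$. For the off-diagonal piece, since $p_i-p_j$ is a polynomial of degree $<r$, a telescoping $p_i-p_j=\sum_k(p_k-p_{k+1})$ combined with Whitney on each $I_k\cup I_{k+1}$ bounds $\norm{p_k-p_{k+1}}{\Lp(I_k\cup I_{k+1})}$ by the local modulus, a Nikol'skii-type inequality for polynomials of degree $<r$ gives the $L_\infty$ estimate, and the polynomial extension inequality transfers it to $x\in I_j$ at the cost of a factor $(\psi_k(x)+\psi_{k+1}(x))^{-(r-1)}$. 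Taking $\mu$ sufficiently large allows the decay $\psi_i^\mu$ of $\tau_i$ to swallow this factor together with the weight distortion $\psi_i^{-s}$ from \ineq{ineq1.3}; the sums collapse via $p$-subadditivity when $0<p<1$ and via Minkowski with $\psi_i^{\mu/2}$ as summation weight when $p\ge 1$, reducing everything to $c\sum_i w_n(x_i)\w_r(f,|J_i|,J_i)_p^p\le c\,\tilde\w_\varphi^r(f,\ccc/n)_{p,w_n}^p$.

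For the derivative estimate with $\nu\ge r$, the identity $P_n=p_j+\sum_i(p_i-p_j)\tau_i$ on $I_j$ together with $p_j^{(\nu)}\equiv 0$ and $(p_i-p_j)^{(k)}\equiv 0$ for $k\ge r$ give
\[
P_n^{(\nu)}(x)=\sum_i\sum_{k=0}^{r-1}\binom{\nu}{k}(p_i-p_j)^{(k)}(x)\,\tau_i^{(\nu-k)}(x),\quad x\in I_j.
\]
The bound on $|\tau_i^{(\nu-k)}|$ from \lem{lem5.1} contributes an extra factor $|I_i|^{-(\nu-k)}$; multiplication by $\dn(x)^\nu\sim|I_j|^\nu$ converts the ratio $|I_j|^\nu/|I_i|^\nu$ into polynomial powers of $\psi_i^{-1}$, absorbed by $\psi_i^\mu$ for $\mu$ large, while Markov's inequality on $I_k\cup I_{k+1}$ handles $(p_k-p_{k+1})^{(k)}$. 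This reduces the derivative bound to the same sum as in the norm estimate.

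The main technical obstacle will be the off-diagonal contribution at a point $x\in I_j$ far from $I_i$: Whitney only furnishes information about $p_i$ on $J_i$, so propagating it globally requires the chain of Whitney approximations on adjacent intervals together with the polynomial extension inequality, and the freedom to choose $\mu$ arbitrarily large in \lem{lem5.1} is what finally beats both the algebraic growth $\psi^{-(r-1)}$ from the extension and the doubling distortion $\psi^{-s}$ from \ineq{ineq1.3}.
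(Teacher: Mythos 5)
Your proposal is correct and follows essentially the same route as the paper: local Whitney polynomials of degree $<r$ glued by the smoothed partition of unity built from the $T_i$ of \lem{lem5.1}, with the off-diagonal terms controlled by the Nikol'skii factor $|I_i|^{-1/p}$, the polynomial-growth factor $\psi_i^{-(r-1)}$, and the weight-distortion factor $\psi_i^{-s}$ from \ineq{ineq1.3}, all absorbed by taking $\mu$ large, then Jensen (for $p\geq 1$) or $p$-subadditivity (for $p<1$) and \lem{lem4.1nnn}/\lem{lemmainf}, and the Leibniz formula for the derivative bounds. The only cosmetic differences are that you write the construction as $\sum_i p_i\tau_i$ rather than the paper's telescoped $p_n+\sum_i(p_i-p_{i+1})T_i$ (the two coincide after summation by parts) and that you perform the degree rescaling $m\sim n/K$ up front rather than at the end.
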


We remark that, in the case $p=\infty$, it is usually assumed that   $f\in\C[-1,1]$ since, otherwise,   $ \w_\varphi^r(f, 1/n)_{p, w_n} \geq c>0$, $n\in\N$, and so the assumption that $f\in\L_\infty[-1,1]$ does not make this theorem more general.

\begin{proof}
We first assume that $0<p<\infty$.
For $n\in\N$,
let $(x_i)_{i=0}^n$ be the Chebyshev partition of $[-1,1]$, and let $p_i \in\Poly_r$, $1\leq i\leq n$, be a polynomial of near best approximation of $f$ on $\I_i := I_i\cup I_{i-1}$ (with $I_0 := \emptyset$) in the $\Lp$ (quasi)norm, \ie
 \[
\norm{f-p_i}{\Lp(I_i)} \leq c E_r(f, \I_i)_p .
\]
We define $S_n$ to be a piecewise polynomial function such that $p_i  = S_n\big|_{I_i}$, $1\leq i\leq n$.

Then
\[
S_n(x) = p_n(x) + \sum_{i=1}^{n-1} \left[ p_i(x)-p_{i+1}(x) \right] \chi_i(x) .
\]
Therefore, using \ineq{ineq1.2}, \ineq{whitney} and \lem{lem4.1nnn}  we have
\begin{eqnarray*}
\norm{f-S_n}{p, w_n}^p & = & \sum_{i=1}^n \int_{I_i} w_n(x) |f(x)-S_n(x)|^p dx
  \leq
c \sum_{i=1}^n w_n(x_i) \int_{I_i}  |f(x)-p_i(x)|^p dx \\
& \leq &
c \sum_{i=1}^n w_n(x_i) \w_r(f, |I_i|, \I_i)_p^p
  \leq
c \tilde \w_\varphi^r(f, \theta/n)_{p, w_n}^p ,
\end{eqnarray*}
where $0<\theta<1$ will be chosen later.
We now define
\[
P_n(x) := p_n(x) + \sum_{i=1}^{n-1} \left[ p_i(x)-p_{i+1}(x) \right] T_{i}(x),
\]
where $T_i = T_1(n, \mu, \e_1, \e_2)$ are the polynomials from \lem{lem5.1} (note that the choice of $\e_1$ and $\e_2$ is not important; for example, we can set $\e_1=\e_2 = 0$) with a sufficiently large $\mu$ (we will prescribe it later so   that all restrictions below are satisfied).

\lem{lem5.1} now implies
\begin{eqnarray*}
\norm{  S_n -P_n }{p, w_n}^p & \leq & \int_{-1}^1 w_n(x) \left[  \sum_{i=1}^{n-1}  \left| p_i (x)-p_{i+1} (x) \right|  \cdot |\chi_i(x) - T_i(x)|   \right]^p dx \\
& \leq & c
\int_{-1}^1 w_n(x) \left[  \sum_{i=1}^{n-1}  \norm{p_i -p_{i+1}}{\infty}  \psi_i(x)^\mu   \right]^p dx \\
\end{eqnarray*}
Now, using the Lagrange interpolation formula and \cite[Theorem 4.2.7]{dl}  we have, for all $p \in \Poly_r$ and $0\leq l\leq r-1$,
\be \label{pohidna}
\norm{p^{(l)}}{\infty}  \leq c \psi_i^{-r+l+1} \norm{p^{(l)}}{\C(I_i)}    \leq c \psi_i^{-r+l+1} |I_i|^{-l-1/p} \norm{p}{\Lp(I_i)} ,
\ee
and hence
\begin{eqnarray*}
\norm{  S_n -P_n }{p, w_n}^p  & \leq &
c \int_{-1}^1 w_n(x) \left[  \sum_{i=1}^{n-1}  \norm{p_i -p_{i+1}}{\Lp(I_i)} |I_i|^{-1/p}     \psi_i(x)^{\mu-r+1}    \right]^p dx .
\end{eqnarray*}
Now, if $1\leq p < \infty$, since
$\sum_{i=1}^{n-1} \psi_i^2 \leq c$,
we have by Jensen's inequality
\[
\left( \sum_{i=1}^{n-1} |\gamma_i| \psi_i(x)^2 \right)^p \leq c \sum_{i=1}^{n-1} |\gamma_i|^p \psi_i(x)^2 \leq c \sum_{i=1}^{n-1} |\gamma_i|^p    ,
\]
and if $0<p<1$, then
\[
\left( \sum_{i=1}^{n-1} |\gamma_i| \psi_i(x)^2 \right)^p \leq   \sum_{i=1}^{n-1} |\gamma_i|^p \psi_i(x)^{2p} \leq c \sum_{i=1}^{n-1} |\gamma_i|^p  .
\]
Therefore, using \ineq{ineq1.3} we have
\begin{eqnarray*}
\norm{  S_n -P_n }{p, w_n}^p  & \leq &
c \int_{-1}^1     \sum_{i=1}^{n-1}  \norm{p_i -p_{i+1}}{\Lp(I_i)}^p |I_i|^{-1}  w_n(x) \psi_i(x)^{(\mu-r-1)p}   dx \\
& \leq &
c \int_{-1}^1     \sum_{i=1}^{n-1}  \norm{p_i -p_{i+1}}{\Lp(I_i)}^p |I_i|^{-1}  w_n(x_i) \psi_i(x)^{(\mu-r-1)p-s}   dx \\
& \leq &
c    \sum_{i=1}^{n-1}  \w_r(f, |I_i|, \I_i \cup \I_{i+1})_p^p  \  |I_i|^{-1}   w_n(x_i) \int_{-1}^1   \psi_i(x)^{(\mu-r-1)p-s}   dx.
\end{eqnarray*}
Now, if $\a\geq 2$, then $\int_{-1}^1 \psi_i(x)^\a dx \leq c |I_i|$, and so
\[
\norm{  S_n -P_n }{p, w_n}^p    \leq
c    \sum_{i=1}^{n-1} w_n(x_i) \w_r(f, |I_i|, \I_i \cup \I_{i+1})_p^p
  \leq  c   \tilde \w_\varphi^r(f, \theta/n)_{p, w_n}^p .
\]
provided $(\mu-r-1)p-s \geq 2$.

Now, note that
\[
P_n^{(\nu)}(x) = p_n^{(\nu)}(x) + \sum_{i=1}^{n-1} \sum_{l=0}^{\nu}  {\nu \choose l}     \left[ p_i^{(l)}(x)-p_{i+1}^{(l)}(x) \right] T_i^{(\nu-l)} (x) ,
\]
and so, for $r\leq \nu \leq \nu_0$ (which guarantees that $p_n^{(\nu)}\equiv 0$), we have using \lem{lem5.1} and \ineq{pohidna}
\begin{eqnarray*}
\norm{ \dn^\nu P_n^{(\nu)}}{p, w_n}^p & \leq &
\int_{-1}^1 w_n(x) \dnx^{\nu p}  \left[ \sum_{i=1}^{n-1} \sum_{l=0}^{\nu}  {\nu \choose l}     \left| p_i^{(l)}(x)-p_{i+1}^{(l)}(x) \right| \cdot \left| T_i^{(\nu-l)} (x)\right| \right]^p dx \\
& \leq &
c \int_{-1}^1 w_n(x)  \dnx^{\nu p} \left[  \sum_{i=1}^{n-1} \sum_{l=0}^{\nu}  \norm{p_i^{(l)} -p_{i+1}^{(l)}}{\infty} |I_i|^{-\nu+l}  \psi_i(x)^\mu   \right]^p dx \\
& \leq &
c \int_{-1}^1 w_n(x)  \dnx^{\nu p} \left[  \sum_{i=1}^{n-1} \sum_{l=0}^{\nu}  \norm{p_i  -p_{i+1} }{\Lp(I_i)} |I_i|^{-\nu-1/p}  \psi_i(x)^{\mu - r+l+1}   \right]^p dx \\
& \leq &
c \int_{-1}^1 w_n(x)  \dnx^{\nu p} \left[  \sum_{i=1}^{n-1}    \norm{p_i  -p_{i+1} }{\Lp(I_i)} |I_i|^{-\nu-1/p}  \psi_i(x)^{\mu - r +1}   \right]^p dx \\
& \leq &
c \int_{-1}^1 w_n(x)  \dnx^{\nu p}    \sum_{i=1}^{n-1}    \norm{p_i  -p_{i+1} }{\Lp(I_i)}^p |I_i|^{-\nu p-1}  \psi_i(x)^{(\mu - r -1)p}   dx \\
& \leq &
c \int_{-1}^1   \dnx^{\nu p}    \sum_{i=1}^{n-1}    \norm{p_i  -p_{i+1} }{\Lp(I_i)}^p |I_i|^{-\nu p-1} w_n(x_i) \psi_i(x)^{(\mu - r -1)p-s}   dx .
\end{eqnarray*}
Now, since $\dnx^2 \leq 4 \dn(x_i) \left( |x-x_i|+ \dn(x_i)\right)$ and $|I_i| \sim \dn(x_i)$,
\begin{eqnarray*}
  \norm{ \dn^\nu P_n^{(\nu)}}{p, w_n}^p
& \leq &
c \int_{-1}^1     \sum_{i=1}^{n-1}    \norm{p_i  -p_{i+1} }{\Lp(I_i)}^p   \left[ \dn(x_i) \left( |x-x_i|+ \dn(x_i)\right) \right]^{\nu p/2}  \\
&& \; \times  \;  |I_i|^{-\nu p-1} w_n(x_i) \psi_i(x)^{(\mu - r -1)p-s}   dx \\
& \leq &
c \int_{-1}^1     \sum_{i=1}^{n-1}    \norm{p_i  -p_{i+1} }{\Lp(I_i)}^p   |I_i|^{-1} w_n(x_i) \psi_i(x)^{(\mu - r -1-\nu/2)p-s}   dx ,
 \end{eqnarray*}
and exactly the same sequence of inequalities as above yields
\[
\norm{ \dn^\nu P_n^{(\nu)}}{p, w_n}^p \leq c \tilde\w_\varphi^r(f, \theta/n)_{p, w_n}^p
\]
provided $(\mu - r -1-\nu_0/2)p-s \geq 2$.
Thus, if we pick $\mu = \mu(r, \nu_0, p, s)$ so that this (the most restrictive in this proof) inequality as well as the restrictions on $\mu$ from \lem{lem5.1} are
 satisfied
then, for each $n\in\N$, we have constructed a polynomial $\tilde P_n$ of degree $< c_*  n$ with some $c_*\in\N$ depending only on $r$, $\nu_0$, $p$ and  $s$, such that
\be \label{tmp1}
\norm{ f - \tilde P_n}{p, w_n}     \leq  c  \tilde \w_\varphi^r(f, \theta/n)_{p, w_n}^p
\ee
and
\be\label{tmp2}
\norm{ \dn^\nu \tilde P_n^{(\nu)}}{p, w_n} \leq c \tilde\w_\varphi^r(f, \theta/n)_{p, w_n} , \quad r\leq \nu \leq \nu_0.
\ee

We now pick $\theta := \ccc/(2c_*)$, and conclude that this completes the proof for   $n\geq c_*$. Indeed, suppose that $n \geq c_*$. Then there exists $m\in\N$   such that
$m c_* \leq n < (m+1) c_*$. Then, for  polynomials $\tilde P_m$ of degree $<c_* m $ (which implies that $\tilde P_m \in \Poly_n$), \ineq{tmp1} and \ineq{tmp2}   hold, and
\[
\tilde \w_\varphi^r(f, \theta/m)_{p, w_m} = \tilde \w_\varphi^r(f, \ccc/(2c_* m))_{p, w_m} \leq c \tilde \w_\varphi^r(f, \ccc/n)_{p, w_m} \leq c \tilde \w_\varphi^r(f, \ccc/n)_{p, w_n},
\]
 since  $w_n(x) \sim w_m(x)$   if $n\sim m$. Also, $\dn(x) \sim \dm(x)$ if $n\sim m$, and the proof is complete for $n\geq c_*$.
 Finally, for $r\leq n\leq c_* $, the statement of the theorem follows from the case $n=r$, Whitney's inequality \ineq{whitney} and the observation that
$w_r(0)\w_r(f, 2)_{p}^p \leq c \tilde \w_\varphi^r(f, \ccc/r)_{p, w_r}^p$ (see \lem{lem4.1nnn}).
This completes the proof in the case $0<p<\infty$.

If $p=\infty$, the proof is analogous and, in fact,  simpler. For completeness, we sketch it below.

 The estimate
$\norm{f-S_n}{\infty, w_n}   \leq c \w_\varphi^r(f, \theta/n)_{\infty, w_n}$ immediately follows from \lem{lemmainf}, and for each $x\in [-1,1]$, we have
\begin{eqnarray*}
  w_n(x) \left|S_n(x) -P_n(x)\right|   & \leq &   w_n(x)   \sum_{i=1}^{n-1}  \left| p_i (x)-p_{i+1} (x) \right|  \cdot |\chi_i(x) - T_i(x)|     \\
%& \leq & c
%   \sum_{i=1}^{n-1}  \norm{p_i -p_{i+1}}{\infty}  w_n(x_i)  \psi_i(x)^{\mu -s}  \\
 & \leq & c
   \sum_{i=1}^{n-1}  \norm{p_i -p_{i+1}}{\C(I_i)}  w_n(x_i)  \psi_i(x)^{\mu-r -s+1}  \\
  & \leq & c
   \sum_{i=1}^{n-1} \w_r(f, |I_i|, \I_i \cup \I_{i+1})_\infty  w_n(x_i)  \psi_i(x)^{\mu-r -s+1}  \\
  & \leq & c
\w_\varphi^r(f, \theta/n)_{\infty, w_n}   \sum_{i=1}^{n-1}      \psi_i(x)^{\mu-r -s+1}  \\
   & \leq & c
\w_\varphi^r(f, \theta/n)_{\infty, w_n}   ,
\end{eqnarray*}
provided $\mu-r -s+1\geq 2$.

Similarly, for $r\leq \nu\leq \nu_0$, as in the case $p<\infty$, we have
\begin{eqnarray*}
  \dnx^\nu w_n(x) \left| P_n^{(\nu)}(x)\right| & \leq &
 c  w_n(x) \dnx^{\nu }    \sum_{i=1}^{n-1} \sum_{l=0}^{\nu}       \left| p_i^{(l)}(x)-p_{i+1}^{(l)}(x) \right| \cdot \left| T_i^{(\nu-l)} (x)\right|   \\
& \leq &
c  w_n(x)  \dnx^{\nu  }   \sum_{i=1}^{n-1}    \norm{p_i  -p_{i+1} }{\C(I_i)} |I_i|^{-\nu}  \psi_i(x)^{\mu - r +1}    \\
& \leq &
c     \sum_{i=1}^{n-1}    \norm{p_i  -p_{i+1} }{\C(I_i)}    w_n(x_i) \psi_i(x)^{ \mu - r -s +1 - \nu/2} \\
   & \leq & c
\w_\varphi^r(f, \theta/n)_{\infty, w_n}   ,
\end{eqnarray*}
if $\mu - r -s +1 - \nu/2 \geq 2$.
This completes the proof for $n\geq c_* n$, and the rest of the proof is the same as in the case $p<\infty$ taking into account that
$w_r(0)\w_r(f, 2)_{\infty} \leq c \w_\varphi^r(f, \ccc/r)_{\infty, w_r}$.
\end{proof}

\sect{Markov-Bernstein type theorems} \label{mbsection}

\begin{lemma} \label{lem8.2}
Let $w$ be a doubling weight, $r\in\N$ and $0<p \leq \infty$. Then, for all $n\in\N$ and $P_n\in\Poly_n$,
\be\label{mbern}
\norm{  \dn^r P_n^{(r)}}{p,w_n} \leq c  \norm{  P_n}{p, w_n} .
\ee
where the constant $c$ depends only on  $r$, $p$ and the doubling constant of $w$.
\end{lemma}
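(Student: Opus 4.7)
The plan is to reduce \ineq{mbern} to the classical (unweighted) Markov--Bernstein inequality by multiplying $P_n$ by the polynomial substitute for $w_n^{1/p}$ supplied by \thm{thm3.2}. Suppose first that $0<p<\infty$, and let $\Q_N\in\Poly_N$ with $N\sim n$ be the polynomial produced by \thm{thm3.2} applied with $\nu_0=r$, so that $\Q_N(x)\sim w_n(x)^{1/p}$ and $\dnx^\nu |\Q_N^{(\nu)}(x)| \leq c w_n(x)^{1/p}$ for $0\leq\nu\leq r$. The pointwise equivalence $\Q_N\sim w_n^{1/p}$ immediately gives
\[
\norm{P_n}{p,w_n}\sim \norm{P_n\Q_N}{p} \andd \norm{\dn^r P_n^{(r)}}{p,w_n}\sim \norm{\dn^r P_n^{(r)}\Q_N}{p},
\]
so it suffices to estimate $\norm{\dn^r P_n^{(r)}\Q_N}{p}$ in terms of $\norm{P_n\Q_N}{p}$.

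Leibniz' rule, applied to $P_n\Q_N\in\Poly_{n+N}$, yields
\[
\dn^r P_n^{(r)}\Q_N=\dn^r(P_n\Q_N)^{(r)}-\sum_{k=0}^{r-1}\binom{r}{k}\dn^k P_n^{(k)}\cdot \dn^{r-k}\Q_N^{(r-k)}.
\]
Taking $\Lp$ (quasi-)norms (via Minkowski for $p\geq 1$ and the $p$-triangle inequality for $p<1$), using the bound $|\dn^{r-k}\Q_N^{(r-k)}|\leq c w_n^{1/p}$ to dominate each cross term, and applying the classical unweighted Markov--Bernstein inequality
\[
\norm{\dn^r R_m^{(r)}}{p}\leq c\norm{R_m}{p}, \quad R_m\in\Poly_m,\quad 0<p\leq\infty,
\]
(which is \ineq{dzyad} when $p=\infty$ and a standard $\Lp$ polynomial inequality otherwise) to $R_{n+N}=P_n\Q_N$, one obtains
\[
\norm{\dn^r P_n^{(r)}}{p,w_n}\leq c\norm{P_n}{p,w_n}+c\sum_{k=0}^{r-1}\norm{\dn^k P_n^{(k)}}{p,w_n}
\]
(with $p$-th powers throughout when $p<1$). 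A routine induction on $r$, whose base $r=0$ is trivial, then yields \ineq{mbern}.

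For $p=\infty$ the argument is identical, except that we invoke \thm{thm3.2} with exponent $p=1$ to obtain $\Q_N$ satisfying $\Q_N\sim w_n$ and $|\dn^\nu\Q_N^{(\nu)}|\leq c w_n$ for $0\leq\nu\leq r$; every $w_n^{1/p}$ above is replaced by $w_n$, and the unweighted Bernstein step reduces directly to \ineq{dzyad}. The only substantive ingredient outside \thm{thm3.2} is the unweighted Bernstein inequality in the $\Lp$ quasi-norm for $0<p<1$, which is a classical polynomial inequality independent of the doubling structure, so I do not anticipate any real obstacle beyond it.
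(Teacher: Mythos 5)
Your proposal is correct and follows essentially the same route as the paper: multiply by the polynomial $\Q_n\sim w_n^{1/p}$ from \thm{thm3.2}, use the Leibniz rule to pass to $(P_n\Q_n)^{(r)}$, control the cross terms via the derivative bounds on $\Q_n$ together with induction on $r$, and invoke the unweighted Markov--Bernstein inequality \ineq{markb} for $0<p\leq\infty$. The only cosmetic differences are your choice $\nu_0=r$ versus the paper's $\nu_0=r+1$ and your explicit substitution $w_n^{1/p}\to w_n$ for $p=\infty$, which the paper handles by the convention $1/\infty:=1$.
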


\lem{lem8.2} will be used to prove an inverse theorem in the case $1\leq p\leq \infty$. However, in the case $0<p<1$, it will not be sufficient and will have to be much improved since we will need to know the dependence of the constant $c$ in \ineq{mbern} on $r$ making sure that it does not grow too fast with $r$. This will be done in Section~\ref{extra}.

\begin{proof}
First, we recall Markov-Bernstein's inequality (see \eg \cite{n}, \cite[(7.2.7)]{dt}, \cite{emn}, \cite[Lemma 4]{t}, \cite[Lemma 2.2]{djl}, \cite[(A.4.22)]{be}, \cite{op} as well \ineq{dzyad})
\be \label{markb}
\norm{  \dn^r P_n^{(r)} }{p} \leq c \norm{ P_n }{p} , \quad P_n\in\Poly_n  \andd 0<p \leq \infty ,
\ee
where $c$ depends only on $r$ and $p$.

Clearly, \ineq{mbern} is true if $r=0$.
Now,  using strong induction in $r$, we suppose   that it is true for all $0\leq l \leq r-1$.
Using \thm{thm3.2} with $\nu_0 =r+1$, for example (and noting that, in the case $p=\infty$, we take $1/p$ to be $1$ in \ineq{pol1} and \ineq{pol2}),  the Leibniz formula and \ineq{markb} we have
\begin{eqnarray*}
\norm{  \dn^r P_n^{(r)}}{p,w_n} & \leq & c \norm{  \dn^r P_n^{(r)} \Q_n}{p}
  \leq   c\norm{ \dn^r(P_n\Q_n)^{(r)} }{p} + c \sum_{l=0}^{r-1} \norm{  \dn^r P_n^{(l)} \Q_n^{(r-l)} }{p} \\
& \leq & c\norm{  \dn^r (P_n\Q_n)^{(r)} }{p} + c \sum_{l=0}^{r-1}   \norm{   \dn^l P_n^{(l)}  }{p, w_n} \\
& \leq & c  \norm{  P_n\Q_n  }{p} + c  \norm{    P_n    }{p, w_n} \leq c  \norm{   P_n   }{p, w_n} ,
\end{eqnarray*}
and so \ineq{mbern} is proved.
\end{proof}

\subsection{A refinement of \lem{lem8.2} for $0<p<1$} \label{extra}

In the proof of the inverse theorem in the case $0<p<1$, we will need to know the dependence of $c$ in \lem{lem8.2} on $r$ making sure that it does not grow too fast with $r$ (since this estimate will  be used for all $0\leq r\leq n-1$).
Hence, we need to reprove \lem{lem8.2} in the case $0<p<1$ paying particular attention to the constants in all estimates.

%In this section,

It is convenient to denote
\[
\delta_k(x) := \max\left\{ {\sqrt{1-x^2} \over k}, {1 \over k^2} \right\} .
\]

We start with the following lemma.

\begin{lemma} \label{oper}
 Let $0<p<1$. Then, for every $n,k\in\N$, $0\leq \mu \leq n-1$, $k\geq n/2$, and $P_n\in\Poly_n$,
\be \label{constants}
\norm{\delta_k^{\mu+1} P_n' }{p} \leq c (\mu+1) \norm{\delta_k^{\mu} P_n  }{p},
\ee
where the constant $c$ depends only on $p$ and is independent of $\mu$, $n$ and $k$.
\end{lemma}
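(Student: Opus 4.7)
\medskip
\noindent\emph{Plan.} The statement is a weighted Markov--Bernstein inequality in the $\Lp$ quasi-norm for $0<p<1$, with the key feature that the constant grows only linearly in $\mu+1$. This linear dependence is essential for \sectio{inversesection}, where the lemma will be iterated for $\mu=0,1,\ldots,n-1$; faster (say exponential) growth in $\mu$ would destroy the resulting inverse estimates. The base case $\mu=0$ itself follows from the classical $\Lp$ Markov--Bernstein inequality \ineq{markb} combined with the equivalence $\delta_k\sim\dn$ valid for $k\geq n/2$.

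My plan for general $\mu$ is to pass to trigonometric polynomials via $x=\cos\theta$, $\theta\in[0,\pi]$. Setting $T(\theta):=P_n(\cos\theta)$ (a trigonometric polynomial of degree $n$), we have $P_n'(\cos\theta)=-T'(\theta)/\sin\theta$ and $\delta_k(\cos\theta)\sim k^{-1}(\sin\theta+k^{-1})$. A direct computation reduces the claimed inequality, in the interior region $\sin\theta\geq 1/k$, to the weighted trigonometric Bernstein estimate
\[
\int \sin^{\mu p+1}\theta\,|T'(\theta)|^p\,d\theta \leq c(p)^p\,(\mu+1)^p\,n^p \int \sin^{\mu p+1}\theta\,|T(\theta)|^p\,d\theta,
\]
and in the boundary region $\sin\theta<1/k$ to a local Markov inequality for $T$ on an interval of length $\sim 1/k$ (the natural Chebyshev scale for $P_n$ since $k\geq n/2$).

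To establish the weighted trigonometric Bernstein inequality, I would localize on the uniform $\theta$-mesh $\theta_j=j\pi/n$. On each interval $[\theta_{j-1},\theta_j]$ of length $\pi/n$, the classical local Bernstein for $\L_p$, $0<p<1$ trigonometric polynomials yields $\|T'\|_{\L_p(J)}\leq c(p)\,n\,\|T\|_{\L_p(J^*)}$ for a concentric enlargement $J^*$ (proved by routing through $\L_\infty$ via a Nikolskii inequality, applying Bernstein there, and returning to $\L_p$). The weight $\sin^{\mu p+1}\theta$ is essentially constant on interior intervals; in the dyadic layers near $\theta=0,\pi$ (where $\sin\theta\sim 2^{-j}$ for $j=1,\ldots,\lfloor\log_2 n\rfloor$), summing the weighted local estimates yields the factor $(\mu+1)$ via a Hardy-type telescoping, traceable ultimately to $\sum_{j\geq 0}2^{-(\mu p+1)j}\sim (\mu p+1)^{-1}$. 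The main obstacle is organizing this boundary summation so that the \emph{only} $\mu$-dependent loss is the linear factor $(\mu+1)$: since $\sin^{\mu p+1}\theta$ varies by a large ratio across the boundary dyadic layers, one must exploit the decay of the weight (rather than using its maximum on each interval), so that the sum collapses to a single Beta-function-type factor of order $(\mu+1)^{-1}$, which then combines with the local Bernstein factor $n$ to produce precisely the claimed constant $c(\mu+1)$.
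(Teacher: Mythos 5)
Your outer reduction coincides with the paper's: split $[-1,1]$ at $\varphi(x)=1/k$, note that $\delta_k=\varphi/k$ on the interior piece and $\delta_k=k^{-2}$ on the boundary piece, dispose of the boundary piece with the global Markov inequality $\norm{P_n'}{p}\leq c\,n^2\norm{P_n}{p}$ together with $n/k\leq 2$, and reduce the interior piece to the $\varphi$-weighted Bernstein inequality
\[
\norm{\varphi^{\mu+1}P_n'}{p}\;\leq\; c(p)\,n(\mu+1)\,\norm{\varphi^{\mu}P_n}{p},\qquad 0\leq\mu\leq n-1 .
\]
At this point the paper simply cites this inequality (from \cite[(2.11)]{t} and \cite[(2.3)]{djl}) and is done in a few lines. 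You instead undertake to reprove it by localization on the Chebyshev mesh, and that is where your argument has a genuine gap.

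The difficulty is that your accounting of the factor $\mu+1$ is inconsistent with the inequality being proved. In \ineq{constants} the factor $\mu+1$ is a \emph{loss} (the constant must grow in $\mu$), whereas the mechanism you invoke produces a \emph{gain}: the sum $\sum_{j\geq 0}2^{-(\mu p+1)j}=\bigl(1-2^{-(\mu p+1)}\bigr)^{-1}$ lies in $(1,2]$ for every $\mu\geq 0$ and tends to $1$ as $\mu\to\infty$; it is not of order $(\mu p+1)^{-1}$. Even if a genuine factor of order $(\mu+1)^{-1}$ did emerge from the boundary layers, combining it with the local Bernstein factor $n$ would yield a constant of order $n$, independent of $\mu$ --- a strictly stronger statement than the cited results, and one that cannot hold without using $\mu\leq n-1$ in an essential way (already for $P(x)=x$ a Beta-function computation gives $\norm{\varphi^{\mu+1}P'}{p}/\norm{\varphi^{\mu}P}{p}\sim\mu^{1/2}$, so a $\mu$-free constant is impossible in the two-weight inequality; your localization scheme nowhere visibly exploits the restriction $\mu\leq n-1$). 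Meanwhile the actual obstacle is left unresolved: on the mesh interval adjacent to $\theta=0$ the weight $\sin^{\mu p+1}\theta$ varies by a factor exponential in $\mu$, so the ``weight is essentially constant on each interval'' localization loses $e^{c\mu}$ (equivalently, the doubling constant of $\varphi^{\mu p}$ is exponential in $\mu$), and your proposed remedy --- ``exploit the decay of the weight'' --- is precisely the step that requires a proof. To close the gap, either import the $\varphi$-weighted inequality from \cite{t} or \cite{djl} as the paper does, or prove it by an argument in which the factor $\mu+1$ enters algebraically (e.g., through $(\varphi^{\mu+1})'=-(\mu+1)x\varphi^{\mu-1}$ after moving a factor $\varphi^{2}=1-x^{2}$ onto the polynomial), rather than geometrically through a dyadic summation.
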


In one form or another, \lem{oper} is   known. For example, it follows from \cite{t} and \cite{djl} (see also  \cite{op}).
However, since this result and, in particular, the exact dependence of the constant on $\mu$ is crucial in our proofs and since, as far as we know, \lem{oper}   was not  explicitly stated anywhere in the present form  we sketch its short (and rather standard) proof.

\begin{proof} It is known (see \eg \cite[(2.11)]{t} or \cite[(2.3)]{djl}) that, for any $n\in\N$, $0\leq \mu\leq n-1$  and $P_n\in\Poly_n$,
\be \label{ditzj}
\norm{\varphi^{\mu+1} P_n'}{p} \leq c_1  n (\mu+1) \norm{\varphi^{\mu} P_n}{p} , \quad c_1 = c_1(p) .
\ee
It is also well known that
\[
\norm{  P_n'}{p} \leq c_2  n^2 \norm{P_n}{p} , \quad c_2 = c_2(p) .
\]
Therefore, denoting
$\E_k := \left\{ x \st \sqrt{1-x^2} \leq 1/k \right\}$ and noting that $\delta_k(x) =1/k^2$ if $x\in \E_k$, and $\delta_k(x) =\varphi(x)/k$ if $x\in [-1,1]\setminus \E_k$, we have
\begin{eqnarray*}
2^{1-1/p} \norm{\delta_k^{\mu+1} P_n' }{p} & \leq &
   \norm{\delta_k^{\mu+1} P_n' }{\Lp(\E_k)}  + \norm{\delta_k^{\mu+1} P_n' }{\Lp([-1,1]\setminus \E_k)}    \\
& = &
  k^{-2\mu-2} \norm{ P_n' }{\Lp(\E_k)}  +  k^{-\mu-1} \norm{\varphi^{\mu+1} P_n' }{\Lp([-1,1]\setminus \E_k)}   \\
 & \leq  &
  k^{-2\mu-2} \norm{ P_n' }{p}  +  k^{-\mu-1} \norm{\varphi^{\mu+1} P_n' }{p}   \\
 & \leq  &
 c_2  k^{-2\mu-2} n^2 \norm{ P_n }{p}  + c_1  n (\mu+1)  k^{-\mu-1} \norm{\varphi^{\mu} P_n }{p}   \\
 & = &
 c_2  (n/k)^2    \norm{k^{-2\mu} P_n }{p}  + c_1  (n/k)  (\mu+1)   \norm{ \left[\varphi/k\right]^{\mu} P_n }{p}   \\
 & \leq  &
\left[  c_2  (n/k)^2 + c_1  (n/k)(\mu+1)  \right]       \norm{\delta_k^\mu  P_n }{p}     \\
& \leq  &
4(c_1+c_2) (\mu+1)  \norm{\delta_k^\mu  P_n }{p}  .
\end{eqnarray*}
\end{proof}

\begin{lemma} \label{lem5.33}
Let $w$ be a doubling weight and $0<p<1$. Then, for all
  $n,m,k\in\N$ and $\mu\in\R$ such that
  \[
  m\leq k,  \quad  n\leq k  \andd    0\leq \mu \leq n-1 ,
\]
 and $P_n \in \Poly_n$,
\[
\norm{\delta_k^{\mu+1} P_n'}{p,w_m}  \leq    (\mu+1) c_* \norm{\delta_k^\mu P_n}{p,w_m}  ,
\]
where the constant $c_*$ depends only on $p$ and the doubling constant of $w$.
\end{lemma}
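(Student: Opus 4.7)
My plan is to reduce the weighted inequality to the unweighted \lem{oper} by multiplying $P_n$ by a polynomial approximation of $w_m^{1/p}$. I first invoke \thm{thm3.2} with $\nu_0 = 1$ to obtain $\Q_m \in \Poly_m$ satisfying $\Q_m \sim w_m^{1/p}$ and $|\dm(x)\Q_m'(x)| \leq c\, w_m(x)^{1/p}$; since $\dm \sim \delta_m$, this yields the pointwise bound $|\Q_m'(x)| \leq c\,\delta_m(x)^{-1}\Q_m(x)$. Because $\Q_m^p \sim w_m$, the weighted norm becomes comparable to an unweighted one:
\[
\norm{\delta_k^{\mu+1} P_n'}{p,w_m}^p \sim \norm{\delta_k^{\mu+1} P_n'\Q_m}{p}^p.
\]

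The key algebraic step is the identity $P_n'\Q_m = (P_n\Q_m)' - P_n\Q_m'$, combined with the $p$-triangle inequality ($0<p<1$):
\[
\norm{\delta_k^{\mu+1} P_n'\Q_m}{p}^p \leq \norm{\delta_k^{\mu+1}(P_n\Q_m)'}{p}^p + \norm{\delta_k^{\mu+1} P_n\Q_m'}{p}^p.
\]
The first summand is handled by \lem{oper} applied to $P_n\Q_m \in \Poly_N$, where $N := n+m-1$. The hypotheses $n \leq k$ and $m \leq k$ give $N \leq 2k-1$, so $k \geq N/2$, and $\mu \leq n-1 \leq N-1$; hence all conditions of \lem{oper} are met and
\[
\norm{\delta_k^{\mu+1}(P_n\Q_m)'}{p} \leq c(\mu+1)\norm{\delta_k^\mu P_n\Q_m}{p} \sim c(\mu+1)\norm{\delta_k^\mu P_n}{p,w_m}.
\]
This is the source of the linear factor $\mu+1$.

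For the correction term $\norm{\delta_k^{\mu+1} P_n\Q_m'}{p}$, I use the derivative bound on $\Q_m$ together with the monotonicity $\delta_k(x) \leq \delta_m(x)$ (which holds because $m \leq k$). This gives the pointwise estimate $\delta_k^{\mu+1}|P_n\Q_m'| \leq c\,\delta_k^{\mu+1}\delta_m^{-1}|P_n|\Q_m \leq c\,\delta_k^\mu |P_n|\Q_m$, hence
\[
\norm{\delta_k^{\mu+1} P_n\Q_m'}{p} \leq c\norm{\delta_k^\mu P_n}{p,w_m},
\]
contributing only a constant that is independent of $\mu$. Since $(\mu+1)^p \geq 1$, the constant contribution is absorbed into the $(\mu+1)^p$ factor from the main term, and extracting $p$-th roots yields the claimed inequality with $c_* = c_*(p,w)$.

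The principal obstacle is the careful matching of degrees so that \lem{oper} applies to the product $P_n\Q_m$. This is precisely where both hypotheses $n \leq k$ and $m \leq k$ are used together: the degree contribution of $\Q_m$ to the product must also be controlled by $k$, and it is essential that \thm{thm3.2} delivers $\Q_m$ in $\Poly_m$ rather than in a class whose degree is merely bounded by some unspecified constant multiple of $m$. Once this bookkeeping is done, the $(\mu+1)$ dependence is inherited directly from \lem{oper} and the correction term contributes only a harmless constant.
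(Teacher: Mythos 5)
Your proof is correct and follows essentially the same route as the paper: Theorem~\ref{thm3.2} with $\nu_0=1$ to produce $\Q_m\in\Poly_m$, the decomposition $P_n'\Q_m=(P_n\Q_m)'-P_n\Q_m'$ with the $p$-quasi-triangle inequality, Lemma~\ref{oper} applied to $P_n\Q_m\in\Poly_{n+m-1}$ after the same degree bookkeeping, and the bound $\delta_k\leq\delta_m\sim\dm$ to absorb the $\Q_m'$ term. No gaps.
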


\begin{proof}
First, using \thm{thm3.2} (with $\nu_0 = 1$) we let $\Q_m\in\Poly_m$ be such that
\[
c_1 w_m(x)^{1/p} \leq \Q_m(x) \leq c_2 w_m(x)^{1/p}
\]
and
\[
|\dmx \Q_m'(x)| \leq c_3 w_m(x)^{1/p},
\]
where constants $c_1$, $c_2$ and $c_3$ depend only on $p$ and the doubling constant of $w$.

Note that $P_n  \Q_m\in \Poly_{n+m-1}$ and so taking into account that $\mu \leq n-1   \leq n+m-2$ and $k \geq (n+m-1)/2$, by \lem{oper}, we have
\[
\norm{\delta_{k}^{\mu+1} (P_n\Q_m)' }{p} \leq c_4  (\mu+1) \norm{\delta_{k}^{\mu} P_n\Q_m }{p},
\]
where $c_4$ depends only on $p$.
Therefore,
\begin{eqnarray*}
\norm{\delta_k^{\mu+1} P_n'}{p,w_m} & \leq &  c_1^{-1} \norm{\delta_k^{\mu+1} P_n' \Q_m}{p}  \\
& \leq &
c_1^{-1} 2^{-1+1/p} \left( \norm{\delta_k^{\mu+1} (P_n  \Q_m)'}{p} +   \norm{\delta_k^{\mu+1} P_n \Q_m'}{p}\right) \\
& \leq &
c_1^{-1} 2^{-1+1/p} \left( c_4 (\mu+1) \norm{\delta_k^{\mu} (P_n  \Q_m) }{p} + c_3 \norm{\delta_k^{\mu+1}\dm^{-1} P_n  w_m^{1/p}}{p} \right) \\
& \leq &
c_1^{-1} 2^{-1+1/p} \left( c_2 c_4 (\mu+1) \norm{\delta_k^{\mu} P_n   }{p, w_m} + c_3 \norm{\delta_k^{\mu}P_n}{p, w_m} \right) \\
& \leq &
c_1^{-1} 2^{-1+1/p} (c_2 c_4+ c_3) (\mu+1)  \norm{\delta_k^{\mu} P_n  }{p, w_m} .
\end{eqnarray*}

\end{proof}

\begin{corollary}
Let $w$ be a doubling weight and $0<p<1$. Then, for all
  $n,m,k,r\in\N$ and $l\in\N_0$ such that $m\leq k$, $n \leq k$, $l\leq r \leq n-1$,   and $P_n \in \Poly_n$,
\[
\norm{\delta_k^{r } P_n^{(r)}}{p,w_m}  \leq     (c_*)^{r-l} {r! \over l!}   \norm{\delta_k^{l} P_n^{(l)}}{p,w_m}  ,
\]
where the constant $c_*$ depends only on $p$ and the doubling constant of $w$.
\end{corollary}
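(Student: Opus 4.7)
The plan is to iterate Lemma~\ref{lem5.33} $r-l$ times, taking one derivative at each step, and keep careful track of the constants so that the product of the linear factors $(j+1)$ telescopes into $r!/l!$. The key observation is that, although $P_n^{(j)}$ has degree at most $n-1-j$ and therefore belongs to $\Poly_{n-j}$, we can always regard it as an element of the larger class $\Poly_n$ (since $\Poly_{n-j}\subset\Poly_n$), which lets us apply Lemma~\ref{lem5.33} with the index fixed at $n$ throughout the iteration and the only degree restriction being $\mu\le n-1$.

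Concretely, for each integer $j$ with $l\le j\le r-1$, set $\mu=j$ and note that the hypotheses of Lemma~\ref{lem5.33} are met: $m\le k$, $n\le k$ are given, and $0\le\mu=j\le r-1\le n-2\le n-1$. Applying the lemma to the polynomial $P_n^{(j)}\in\Poly_n$ yields
\[
\norm{\delta_k^{j+1}P_n^{(j+1)}}{p,w_m}
=\norm{\delta_k^{j+1}(P_n^{(j)})'}{p,w_m}
\le (j+1)\,c_*\,\norm{\delta_k^{j}P_n^{(j)}}{p,w_m},
\]
where $c_*$ is the constant provided by Lemma~\ref{lem5.33}, depending only on $p$ and the doubling constant of $w$.

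Chaining these inequalities from $j=l$ up to $j=r-1$ gives
\[
\norm{\delta_k^{r}P_n^{(r)}}{p,w_m}
\le \Bigl(\prod_{j=l}^{r-1}(j+1)\Bigr)\,c_*^{\,r-l}\,\norm{\delta_k^{l}P_n^{(l)}}{p,w_m}
= (c_*)^{r-l}\frac{r!}{l!}\,\norm{\delta_k^{l}P_n^{(l)}}{p,w_m},
\]
which is precisely the claimed estimate; the case $r=l$ is trivial. There is no real obstacle here beyond the verification that the hypotheses of Lemma~\ref{lem5.33} are preserved at every stage of the iteration; the only point worth emphasizing is that it is \emph{crucial} that the constant in Lemma~\ref{lem5.33} is linear in $\mu+1$, because this is exactly what produces the factorial $r!/l!$ (rather than something growing like $(c(r))^{r-l}$) and so makes the corollary usable later for sums over all $0\le r\le n-1$ in the inverse theorem for $0<p<1$.
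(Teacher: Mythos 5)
Your proposal is correct and is essentially identical to the paper's own proof, which likewise applies Lemma~\ref{lem5.33} to $P_n^{(j)}$ with $\mu=j$ for $l\le j\le r-1$ and chains the resulting inequalities to produce the factor $(c_*)^{r-l}\,r!/l!$. The paper states this in one line; your added remarks (viewing $P_n^{(j)}$ as an element of $\Poly_n$ so that the restriction $\mu\le n-1$ is the relevant one, and the importance of the linear dependence on $\mu+1$) are accurate but not a different argument.
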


\begin{proof}
%Since $P_n^{(j)} \in   \Poly_n$,
\lem{lem5.33} implies
\[
\norm{\delta_k^{j+1} P_n^{(j+1)}}{p,w_m}  = \norm{\delta_k^{j+1} \left(P_n^{(j)}\right)'}{p,w_m}   \leq (j+1) c(p) \norm{\delta_k^{j} P_n^{(j)}}{p,w_m}  , \quad \text{for all}\;\;  0\leq j \leq r-1 ,
\]
which immediately implies the statement of the corollary.
\end{proof}

Now, taking into account that   $\delta_n(x) \leq \dnx \leq 2 \delta_n(x)$, we finally get the result that we need in order to prove the inverse type theorems for $0<p<1$.

\begin{corollary} \label{maincorr}
Let $w$ be a doubling weight, $0<p<1$, $n,r\in\N$, $l\in\N_0$, $0\leq l\leq r \leq n-1$,
  and $P_n \in \Poly_n$. Then
\[
\norm{\dn^{r } P_n^{(r)}}{p,w_n}  \leq  2^l    (c_*)^{r-l} {r! \over l!}   \norm{\dn^{l} P_n^{(l)}}{p,w_n}  ,
\]
where the constant $c_*$ depends only on $p$ and the doubling constant of $w$.
\end{corollary}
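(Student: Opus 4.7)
The statement will follow immediately from the preceding corollary by specializing $k=m=n$ and then converting between the $\delta_n$- and $\dn$-norms using the pointwise comparison $\delta_n(x) \leq \dn(x) \leq 2\delta_n(x)$ recorded just before the statement (this comparison is immediate from $\delta_n(x) = \max\{\varphi(x)/n,\, 1/n^2\}$ and $\dn(x) = \varphi(x)/n + 1/n^2$). The plan is entirely mechanical; no genuine obstacle arises, so this is really a bookkeeping exercise aimed at getting the factor of $2^l$ in the right place.

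Concretely, I would first apply the preceding corollary with $k = m = n$. The hypotheses $m \leq k$, $n \leq k$, and $l \leq r \leq n-1$ hold trivially, so
\[
\norm{\delta_n^r P_n^{(r)}}{p,w_n} \leq (c_*)^{r-l}\,\frac{r!}{l!}\, \norm{\delta_n^l P_n^{(l)}}{p,w_n}.
\]
Next, the pointwise bound $\dn \leq 2\delta_n$, raised to the $r$th power, yields $\norm{\dn^r P_n^{(r)}}{p,w_n} \leq 2^r \norm{\delta_n^r P_n^{(r)}}{p,w_n}$, while the reverse bound $\delta_n \leq \dn$, raised to the $l$th power, yields $\norm{\delta_n^l P_n^{(l)}}{p,w_n} \leq \norm{\dn^l P_n^{(l)}}{p,w_n}$. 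Chaining these three estimates gives
\[
\norm{\dn^r P_n^{(r)}}{p,w_n} \leq 2^r (c_*)^{r-l}\,\frac{r!}{l!}\, \norm{\dn^l P_n^{(l)}}{p,w_n} = 2^l\,(2c_*)^{r-l}\,\frac{r!}{l!}\, \norm{\dn^l P_n^{(l)}}{p,w_n}.
\]

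Finally, I would absorb the remaining factor of $2^{r-l}$ into the constant by relabeling $c_* \mapsto 2c_*$; the new $c_*$ still depends only on $p$ and the doubling constant of $w$, producing the exact form of the claimed inequality. The only thing worth flagging is the algebraic identity $2^r (c_*)^{r-l} = 2^l (2c_*)^{r-l}$, which is what allows the bound to be displayed with the prefactor $2^l$ (rather than the more obvious $2^r$ that comes out of the first chaining step).
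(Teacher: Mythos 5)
Your proposal is correct and is exactly the argument the paper intends: the paper gives no separate proof, only the remark that the result follows from the preceding corollary (with $k=m=n$) ``taking into account that $\delta_n(x)\leq\rho_n(x)\leq 2\delta_n(x)$,'' which is precisely your chaining step plus the relabeling $c_*\mapsto 2c_*$ to convert the prefactor $2^r$ into $2^l$.
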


 We note that exactly the same proof as above (and actually simpler since \lem{oper} is no longer needed and \ineq{ditzj} can be used)  yields the following result.

\begin{corollary} \label{maincorrphi}
Let $w$ be a doubling weight, $0<p<1$, $n,r\in\N$, $l\in\N_0$, $0\leq l\leq r \leq n-1$,
  and $P_n \in \Poly_n$. Then
\[
\norm{\varphi^{r } P_n^{(r)}}{p,w_n}  \leq  (c_*)^{r-l} {r! \over l!}  n^{r-l}  \norm{\varphi^{l} P_n^{(l)}}{p,w_n}  ,
\]
where the constant $c_*$ depends only on $p$ and the doubling constant of $w$.
\end{corollary}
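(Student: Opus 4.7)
The plan is to follow the proof of Corollary \ref{maincorr} with $\delta_k$ replaced by $\varphi$; what simplifies the argument is that \lem{oper} can be replaced by the direct Bernstein inequality \ineq{ditzj}. First I would establish a $\varphi$-analogue of \lem{lem5.33}: for a doubling weight $w$, $0<p<1$, $n\in\N$, $P_n\in\Poly_n$ and $0\leq\mu\leq n-1$,
\begin{equation}\label{phianalog}
\norm{\varphi^{\mu+1} P_n'}{p,w_n}\leq c_*\, n(\mu+1) \norm{\varphi^\mu P_n}{p,w_n},
\end{equation}
where $c_*$ depends only on $p$ and the doubling constant of $w$. The corollary then follows by straightforward iteration.

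To prove \eqref{phianalog}, I would pick $\Q_n\in\Poly_n$ from \thm{thm3.2} with $\nu_0=1$, so that $c_1 w_n^{1/p}\leq \Q_n\leq c_2 w_n^{1/p}$ and $|\dn\,\Q_n'|\leq c_3\, w_n^{1/p}$. Since $P_n\Q_n\in\Poly_{2n-1}$ and $\mu\leq n-1\leq 2n-2$, applying \ineq{ditzj} to $P_n\Q_n$ gives
\[
\norm{\varphi^{\mu+1}(P_n\Q_n)'}{p}\leq c_1(2n-1)(\mu+1)\norm{\varphi^\mu P_n\Q_n}{p}\leq 2c_1 c_2\, n(\mu+1)\norm{\varphi^\mu P_n}{p,w_n}.
\]
Writing $P_n'\Q_n=(P_n\Q_n)'-P_n\Q_n'$ and applying the quasi-triangle inequality for $0<p<1$, it remains to bound $\norm{\varphi^{\mu+1}P_n\Q_n'}{p}$. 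For this, the elementary inequality $\varphi(x)\leq n\dnx$, which is immediate from $\dnx=n^{-1}\varphi(x)+n^{-2}$, combined with \ineq{pol2} yields $|\varphi\,\Q_n'|\leq c_3\, n\, w_n^{1/p}$, whence $\norm{\varphi^{\mu+1}P_n\Q_n'}{p}\leq c_3\, n\norm{\varphi^\mu P_n}{p,w_n}$. Dividing through by $c_1$ and using $\Q_n\geq c_1 w_n^{1/p}$ on the left side of $\norm{\varphi^{\mu+1}P_n'}{p,w_n}\leq c_1^{-1}\norm{\varphi^{\mu+1}P_n'\Q_n}{p}$ then yields \eqref{phianalog}.

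For the iteration, I would fix $l\leq r\leq n-1$ and apply \eqref{phianalog} to $P_n^{(j)}$, regarded as an element of $\Poly_n$ (since its degree is at most $n-1-j\leq n-1$), with $\mu=j$ for each $l\leq j\leq r-1$. This gives
\[
\norm{\varphi^{j+1}P_n^{(j+1)}}{p,w_n}\leq c_*\, n(j+1)\norm{\varphi^j P_n^{(j)}}{p,w_n},
\]
and multiplying these $r-l$ inequalities telescopes to the desired bound with constant $\prod_{j=l}^{r-1} c_* n(j+1)=(c_*)^{r-l}n^{r-l}\,r!/l!$.

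The only delicate point, as in the $\delta_k$ version, is ensuring that the constant in \eqref{phianalog} is linear in $\mu+1$ rather than, say, $2^{\mu}$, which is exactly what \ineq{ditzj} provides; the extra factor $n$ it brings is harmless since the target estimate already carries $n^{r-l}$. One must also remember to bound $\varphi\Q_n'$ using $\varphi\leq n\dn$ (and not the reverse), which is the source of this extra $n$ compared with the $\delta_k$-analogue in \lem{lem5.33}.
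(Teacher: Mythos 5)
Your proposal is correct and follows essentially the same route the paper indicates for this corollary: repeat the proof of Lemma \ref{lem5.33} with $\varphi$ in place of $\delta_k$, using the inequality \ineq{ditzj} directly (which supplies the harmless extra factor $n$ per differentiation), and then iterate as in Corollary \ref{maincorr}. The degree count for $P_n\Q_n$, the use of $\varphi\leq n\dn$ to control $\varphi\Q_n'$, and the telescoping product $(c_*)^{r-l}n^{r-l}r!/l!$ all check out.
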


\subsection{Other Markov-Bernstein type estimates in the case \mbox{$0<p<1$}}

 \begin{lemma} \label{lem5.11}
Let $0<p<1$  and $n,m,r\in\N$ be such that $m\leq n$, and suppose that
  $w$ is a doubling weight from the class $\Wdg_\Lambda$ with $\gamma \leq rp$.

 Then, for any $\kappa >0$,  there exists a positive constant $\ccc$ depending only on $\kappa$, $r$,  $p$, $\Lambda$, and the doubling constant of $w$,
 such that, for any $P_m \in \Poly_m$ and $0<t \leq 1/m$,
 \be \label{i7.8}
 \w_\varphi^r (P_m, \ccc t)_{p, w_n} \leq \kappa  \left({n \over m}\right)^{\delta/p} (t m)^r \norm{\dm^r P_m^{(r)}}{p, w_m} .
 \ee
 \end{lemma}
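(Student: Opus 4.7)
The plan is to bound $|\Delta_{h\varphi(x)}^r(P_m,x)|$ pointwise by $(h\varphi(x))^r$ times a local supremum of $|P_m^{(r)}|$ via the classical mean value theorem for divided differences (valid since $P_m$ is smooth), then convert this supremum into an $L_p$-average over a short interval using the standard local (Nikolskii-type) polynomial inequality at the Ditzian--Totik scale $\dm(x)$, and finally switch from $w_n$ to $w_m$ using the defining inequality of $\Wdg_\Lambda$. Fix $h\in(0,\ccc t]$ with $\ccc\in(0,1]$ to be chosen later, so that $h\leq \ccc/m$. For every $x$ with $x\pm rh\varphi(x)/2\in[-1,1]$ the MVT gives
\[
\Delta_{h\varphi(x)}^r(P_m,x)=(h\varphi(x))^r P_m^{(r)}(\xi),\qquad |\xi-x|\leq rh\varphi(x)/2.
\]
Using $\varphi(x)\leq m\dm(x)$, we get $|\xi-x|\leq (r\ccc/2)\dm(x)$, and for $\ccc$ small the local polynomial Nikolskii inequality applied to $P_m^{(r)}\in\Poly_m$ yields
\[
\sup_{|y-x|\leq (r\ccc/2)\dm(x)}|P_m^{(r)}(y)|^p\leq\frac{c}{\dm(x)}\int_{|u-x|\leq c'\dm(x)}|P_m^{(r)}(u)|^p\,du,
\]
where $c,c'$ depend only on $p$. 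Combining these gives the pointwise bound
\[
|\Delta_{h\varphi(x)}^r(P_m,x)|^p\leq c\,(h\varphi(x))^{rp}\,\dm(x)^{-1}\int_{|u-x|\leq c'\dm(x)}|P_m^{(r)}(u)|^p du.
\]

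For the weight switch I would write $\varphi(x)^{rp}=\varphi(x)^\gamma\cdot\varphi(x)^{rp-\gamma}$, which is legitimate because $0\leq\gamma\leq rp$. Applying \ineq{touse} to the first factor and the trivial inequality $\varphi(x)^{rp-\gamma}\leq (m\dm(x))^{rp-\gamma}$ to the second yields
\[
\varphi(x)^{rp}w_n(x)\leq \Lambda\,n^\delta m^{rp-\delta}\,\dm(x)^{rp}w_m(x).
\]
Multiplying the pointwise estimate by $w_n(x)$, integrating over $x\in[-1,1]$, exchanging the order of integration, and using the equivalences $\dm(x)\sim\dm(u)$ and $w_m(x)\sim w_m(u)$ on the region $|x-u|\leq c'\dm(x)$ (so that the inner $x$-integration contributes a factor $\sim\dm(u)^{rp}w_m(u)$), one arrives at
\[
\norm{\Delta_{h\varphi}^r(P_m)}{p,w_n}^p\leq c^*\Lambda\,(hm)^{rp}(n/m)^\delta\,\norm{\dm^r P_m^{(r)}}{p,w_m}^p,
\]
with $c^*$ depending only on $r$, $p$, and the doubling constant of $w$.

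Using $h\leq\ccc t$, taking $p$-th roots, and then the supremum in $h\in(0,\ccc t]$ gives
\[
\w_\varphi^r(P_m,\ccc t)_{p,w_n}\leq (c^*\Lambda)^{1/p}\,\ccc^r(tm)^r(n/m)^{\delta/p}\,\norm{\dm^r P_m^{(r)}}{p,w_m};
\]
choosing $\ccc:=\min\{1,(\kappa/(c^*\Lambda)^{1/p})^{1/r}\}$, which depends only on $\kappa,r,p,\Lambda$ and the doubling constant of $w$, delivers \ineq{i7.8}. The step I expect to need the most care is the local polynomial Nikolskii inequality at scale $\dm(x)$ in the quasi-norm regime $0<p<1$, valid uniformly in $x\in[-1,1]$ including near the endpoints where $\dm(x)\sim m^{-2}$ and the enlarged interval is very short; once this is secured, the rest is routine book-keeping with \ineq{touse} and the doubling equivalences for $w_n$, $w_m$ and $\dm$ established in Section~\ref{properties}.
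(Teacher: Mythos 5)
Your argument is correct, but it takes a genuinely different route from the paper's. The paper expands $P_m$ in its (finite) Taylor series, writes $\Delta_{h\varphi(x)}^r(P_m,x)=\sum_{j\ge r}\varphi(x)^jP_m^{(j)}(x)\frac{h^j}{j!}\,\Delta_1^r\bigl((\cdot)^j,0\bigr)$, performs the same weight switch via \ineq{touse} term by term, and then controls each $\norm{\dm^jP_m^{(j)}}{p,w_m}$ by $\norm{\dm^rP_m^{(r)}}{p,w_m}$ using the iterated Markov--Bernstein inequality of \cor{maincorr}, whose constant $(c_*)^{j-r}\,j!/l!$ is tracked precisely so that the resulting series $\sum_j\binom{j}{r}^p(\ccc rc_1/2)^{(j-r)p}$ converges once $\ccc$ is small; this is the entire reason Section~\ref{extra} exists. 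You instead collapse the symmetric difference to a single value $(h\varphi(x))^rP_m^{(r)}(\xi_x)$ by the mean value theorem and then localize via the pointwise-to-local $\Lp$ (Christoffel-function type) estimate $|Q(y)|^p\le c\,\dm(x)^{-1}\int_{|u-x|\le c'\dm(x)}|Q|^p$ for $Q\in\Poly_m$ and $0<p<1$, uniformly in $x$ including the endpoint zone. Your bookkeeping is sound: the restriction $\gamma\le rp$ enters exactly where it does in the paper, the Fubini step using $\dm(x)\sim\dm(u)$ and $w_m(x)\sim w_m(u)$ on $|x-u|\le c'\dm(x)$ is legitimate, and there is no circularity in the choice of $\ccc$ because fixing $\ccc\le 1$ first makes the Nikolskii constant depend only on $r$ and $p$. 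What your route buys is brevity: it bypasses \cor{maincorr} and all the constant-tracking of Section~\ref{extra}. What it costs is the importation of the local Nikolskii inequality at scale $\dm(x)$ in the quasi-norm range, which you rightly flag as the delicate point; it is not proved in this paper and cannot be obtained by naive rescaling (that loses a factor $m^{2/p}$), though it is a known consequence of the lower bound for the $\Lp$ Christoffel function and is available in \cite{mt2000}, so your proof is complete modulo that citation. Note also that the paper's Taylor-expansion route pays for itself later: the same expansion, keeping the $j=r$ term exactly, yields the two-sided estimate of \lem{lem5.11new}, which a mean-value argument does not give.
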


The following corollary is an immediate consequence of  \lem{lem5.11} and \cor{maincorr} with $l=0$ (by setting $\kappa := \left[   (c_*)^{r}r!\right]^{-1}$, where $c_*$ is the constant from \cor{maincorr}).

 \begin{corollary} \label{lem5.11cor}
Let $0<p<1$  and $n,m,r\in\N$ be such that $m\leq n$, and suppose that
  $w$ is a doubling weight from the class $\Wdg_\Lambda$ with $\gamma \leq rp$.

 Then,   there exists a positive constant $\ccc$ depending only on $r$,  $p$, $\Lambda$, and the doubling constant of $w$,
 such that, for any $P_m \in \Poly_m$ and $0<t \leq 1/m$,
 \be \label{i7.8cor}
 \w_\varphi^r (P_m, \ccc t)_{p, w_n} \leq   \left({n \over m}\right)^{\delta/p} (t m)^r \norm{  P_m }{p, w_m} .
 \ee
 \end{corollary}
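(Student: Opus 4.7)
The plan is to chain Lemma~\ref{lem5.11} and Corollary~\ref{maincorr} together with a judicious choice of the free parameter $\kappa$, so that the Bernstein-type factor produced by the latter exactly cancels the factor $\kappa$ left over from the former.

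First I dispose of the degenerate case. If $r \geq m$, then $P_m \in \Poly_m$ has degree at most $m-1 < r$, so every $r$-th symmetric difference $\Delta_{h\varphi(\cdot)}^r(P_m,\cdot)$ is identically zero. Hence $\w_\varphi^r(P_m, \ccc t)_{p,w_n} = 0$, and the inequality holds trivially for any choice of $\ccc$. From now on assume $r \leq m-1$, which is what is needed to legitimately apply Corollary~\ref{maincorr} to a polynomial in $\Poly_m$.

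Let $c_*$ denote the constant appearing in Corollary~\ref{maincorr} (depending only on $p$ and the doubling constant of $w$), and set
\[
\kappa := \bigl[(c_*)^r\, r!\bigr]^{-1}.
\]
Apply Lemma~\ref{lem5.11} with this particular $\kappa$. Since $\kappa$ depends only on $r$, $p$, and the doubling constant, the resulting constant $\ccc > 0$ depends only on $r$, $p$, $\Lambda$, and the doubling constant of $w$, as demanded by the statement. Lemma~\ref{lem5.11} then furnishes, for every $P_m \in \Poly_m$ and every $0 < t \leq 1/m$,
\[
\w_\varphi^r(P_m, \ccc t)_{p, w_n} \;\leq\; \kappa \left(\frac{n}{m}\right)^{\delta/p} (tm)^r \norm{\dm^r P_m^{(r)}}{p, w_m}.
\]

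Next invoke Corollary~\ref{maincorr} with $l = 0$ and polynomial degree parameter equal to $m$ (valid because $r \leq m-1$), which gives
\[
\norm{\dm^r P_m^{(r)}}{p, w_m} \;\leq\; 2^0 (c_*)^{r}\, \frac{r!}{0!}\, \norm{P_m}{p, w_m} \;=\; (c_*)^r\, r!\, \norm{P_m}{p, w_m}.
\]
Substituting this into the previous display and using $\kappa\, (c_*)^r\, r! = 1$ yields \ineq{i7.8cor}. Since every step is an application of a previously established result, there is essentially no obstacle; the only thing one has to be careful about is making sure that the constant $\ccc$ produced by Lemma~\ref{lem5.11} does not depend on $P_m$ or $t$, which is guaranteed by that lemma's statement once $\kappa$ is fixed as a function of $r$, $p$ and the doubling constant alone.
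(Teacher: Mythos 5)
Your proposal is correct and is essentially the paper's own argument: the paper derives \cor{lem5.11cor} in exactly this way, applying \lem{lem5.11} with $\kappa := \left[(c_*)^r r!\right]^{-1}$ and then \cor{maincorr} with $l=0$ so that the factors cancel. Your explicit treatment of the degenerate case $r\geq m$ (needed since \cor{maincorr} requires $r\leq m-1$) and your check of the parameter dependence of $\ccc$ are details the paper leaves implicit, and they are handled correctly.
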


 \begin{proof}[Proof of \lem{lem5.11}] The method of the proof is rather standard (see \eg \cites{t, dly, dhi}).
 Suppose that    $h \leq  \ccc t \leq \ccc/m$, where $\ccc$ is a positive constant that we will choose later.
Using Taylor's expansion of $P_m$ we have
\begin{eqnarray} \label{taylor}
\Delta_{h\varphi(x)}^r (P_m, x) &=& \sum_{i=0}^r {r \choose i} (-1)^{r-i} P_m(x + (i-r/2)h \varphi(x))   \\ \nonumber
&=& \sum_{i=0}^r {r \choose i} (-1)^{r-i} \sum_{j=0 }^{m-1} {(i-r/2)^j h^j \over j!} \ \varphi(x)^j P_m^{(j)}(x) \\ \nonumber
&=&
\sum_{j=0 }^{m-1}  \varphi(x)^j P_m^{(j)}(x) {h^j \over j!}   \sum_{i=0}^r {r \choose i} (-1)^{r-i}  (i-r/2)^j      \\ \nonumber
&=&
\sum_{j=0 }^{m-1}  \varphi(x)^j P_m^{(j)}(x) {h^j \over j!}  \  \Delta_{1}^r \left((\cdot)^j, 0\right) .
\end{eqnarray}
Recall now that, if $g^{(r)}$ is continuous on $[x-r \mu/2, x+ r\mu/2]$ then, for some $\xi \in (x-r \mu/2, x+ r\mu/2)$,
\[
\Delta_\mu^r (g,x) = \mu^r g^{(r)}(\xi) .
\]
This implies
\be \label{estimated}
|\Delta_{1}^r \left((\cdot)^j, 0\right)| \leq
\begin{cases}
0, & \mbox{\rm if } 0\leq j \leq r-1 ,\\
\ds {j! \over (j-r)! } (r/2)^{j-r} , & \mbox{\rm if } j \geq r.
\end{cases}
\ee
Also, since $w\in\Wdg_\Lambda$,
\be \label{notneed}
w_n(x) \varphi(x)^{\gamma} \leq \Lambda n^\delta m^{\gamma-\delta} \dm(x)^{\gamma}   w_m(x)  ,
\ee
%where   $c_*$ is the constant from \ineq{touse} that depends only on $w$ and parameters $\delta$ and $\gamma$,
and    taking into account that $jp \geq rp \geq \gamma$, $r\leq j\leq m-1$, and $\varphi(x) \leq m \dm(x)$, we have
\begin{eqnarray*}
\lefteqn{ \norm{\Delta_{h\varphi}^r (P_m)}{p, w_n}^p}\\
  & = &
\int_{-1}^1 w_n(x) |\Delta_{h\varphi(x)}^r (P_m, x)|^p dx \\
& \leq &
\sum_{j=r }^{m-1}  \left( {h^j \over (j-r)!}(r/2)^{j-r} \right)^p  \int_{-1}^1 w_n(x) \left|\varphi(x)^j P_m^{(j)}(x)\right|^p dx \\
& \leq &
\Lambda \sum_{j=r }^{m-1}  \left( {h^j \over (j-r)!}(r/2)^{j-r} \right)^p  \int_{-1}^1
  n^\delta m^{\gamma-\delta} \dm(x)^{\gamma} \varphi(x)^{jp-\gamma}   w_m(x)
\left| P_m^{(j)}(x)\right|^p dx \\
& \leq &
\Lambda \left({n \over m}\right)^\delta  \sum_{j=r }^{m-1}  \left( {(hm)^j \over (j-r)!}(r/2)^{j-r} \right)^p  \int_{-1}^1 w_m(x) \left|\dm(x)^j P_m^{(j)}(x)\right|^p dx .
\end{eqnarray*}

It follows from \cor{maincorr} that, for some constant $c_1$ that depends only on $p$ and   the doubling constant of $w$,
\begin{eqnarray*}
\lefteqn{ \int_{-1}^1 w_m(x) \left|\dm(x)^j P_m^{(j)}(x)\right|^p dx   =   \norm{\dm^j P_m^{(j)}}{p, w_m}^p} \\
&\leq&  \left( 2^r (c_1)^{j-r} {j! \over r!}\right)^p  \norm{\dm^r P_m^{(r)}}{p, w_m}^p  , \quad   r\leq j\leq m-1 .
\end{eqnarray*}

Therefore, recalling that $h   \leq \ccc/m$, we have
\begin{eqnarray*}
 \norm{\Delta_{h\varphi}^r (P_m)}{p, w_n}^p
& \leq &
\Lambda \left({n \over m}\right)^\delta \sum_{j=r }^{m-1}  \left( {(hm)^j j! \over (j-r)! r!}(r/2)^{j-r}  2^r (c_1)^{j-r}      \right)^p  \norm{\dm^r P_m^{(r)}}{p, w_m}^p \\
& \leq &
\Lambda 2^{rp}  \left({n \over m}\right)^\delta   \norm{\dm^r P_m^{(r)}}{p, w_m}^p (hm)^{rp}  \sum_{j=r }^{m-1}  \left(    {j \choose r}   (\ccc r c_1 /2)^{j-r} \right)^p .
\end{eqnarray*}
Now, if $\ccc \leq  1/(rc_1)$, noting that  $\sum_{j=r }^{\infty}  \left(1/2\right)^{(j-r)p} \left[{j \choose r}\right]^{p} = c_2$, where $c_2$ depends only on $r$ and $p$, we conclude that
\[
\w_\varphi^r (P_m, \ccc t)_{p, w_n} \leq 2^{r} \left(\Lambda c_2 \right)^{1/p}  \ccc^r   \left({n \over m}\right)^{\delta/p} (t m)^r \norm{\dm^r P_m^{(r)}}{p, w_m}.
\]
Hence, if we guarantee that $\ccc$ is such that $2^{r} \left(\Lambda c_2 \right)^{1/p}  \ccc^r \leq \kappa$, then
\[
\w_\varphi^r (P_m, \ccc t)_{p, w_n} \leq \kappa  \left({n \over m}\right)^{\delta/p} (t m)^r \norm{\dm^r P_m^{(r)}}{p, w_m},
\]
and the proof is complete if we pick $\ccc := \min\left\{1/(rc_1), \kappa^{1/r} \left(\Lambda c_2 \right)^{-1/(rp)}/2 \right\}$.
 \end{proof}

Note now that if the same weight $w_n$ is used on both sides of \ineq{i7.8} (\ie $m=n$), then there is no need to use \ineq{notneed} in the proof of \lem{lem5.11}.
 Also, one can keep using $\varphi^{jp}$ and not replace it by $(n\dn)^{jp}$, and use \cor{maincorrphi} instead of \cor{maincorr} in order to estimate
 $\norm{\varphi^j P_n^{(j)}}{p, w_n}$.
The following result  that is proved   using an idea from  \cite{dhi}  will be used in the last section to show the equivalence of the moduli and  certain  realization functionals. Even though the proof is very similar to that of \lem{lem5.11} we sketch it below for completeness.

\begin{lemma}  \label{lem5.11new}
Let $w$ be a doubling weight, $0<p<1$  and $n,r\in\N$.
Then,   there exists a positive constant $\ccc$ depending only on  $r$,  $p$ and  the doubling constant of $w$,
 such that, for any $P_n \in \Poly_n$ and $0< h \leq t \leq \ccc/n$,
 \[  %\label{i7.8new}
\left( 1/2 \right)^{1/p}  h^{r} \norm{\varphi^r P_n^{(r)}}{p, w_n}    \leq   \norm{\Delta_{h\varphi}^r (P_n)}{p, w_n}\leq (3/2)^{1/p} h^{r} \norm{\varphi^r P_n^{(r)}}{p, w_n} ,
\]
and so
\[
\left( 1/2 \right)^{1/p}  t^r \norm{\varphi^r P_n^{(r)}}{p, w_n} \leq \w_\varphi^r (P_n,   t)_{p, w_n}  \leq (3/2)^{1/p} t^r \norm{\varphi^r P_n^{(r)}}{p, w_n} .
\]
 \end{lemma}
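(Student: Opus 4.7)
The plan is to adapt the Taylor-expansion argument used in \lem{lem5.11}, but with $m=n$ and with the main ($j=r$) term separated from the tail. Recalling from \ineq{taylor} that
\[
\Delta_{h\varphi(x)}^r(P_n,x) = \sum_{j=r}^{n-1} \varphi(x)^j P_n^{(j)}(x)\,\frac{h^j}{j!}\,\Delta_1^r\!\left((\cdot)^j,0\right),
\]
and that $\Delta_1^r((\cdot)^r,0)=r!$, the $j=r$ term equals exactly $h^r\varphi(x)^r P_n^{(r)}(x)$. Writing
\[
\Delta_{h\varphi(x)}^r(P_n,x) = h^r\varphi(x)^r P_n^{(r)}(x) + R(x),
\]
the quasi-triangle inequality $\norm{A+B}{p,w_n}^p \leq \norm{A}{p,w_n}^p + \norm{B}{p,w_n}^p$ (valid for $0<p<1$) gives, when applied to the identities $\Delta_{h\varphi}^r(P_n) = h^r\varphi^r P_n^{(r)} + R$ and $h^r\varphi^r P_n^{(r)} = \Delta_{h\varphi}^r(P_n) - R$, both inequalities of the lemma at once, provided I can show
\[
\norm{R}{p,w_n}^p \leq \tfrac12\, h^{rp}\,\norm{\varphi^r P_n^{(r)}}{p,w_n}^p.
\]

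To control the tail, I would use \ineq{estimated} (which kills the $j<r$ terms and bounds $|\Delta_1^r((\cdot)^j,0)|$ by $\tfrac{j!}{(j-r)!}(r/2)^{j-r}$ for $j\geq r$), and then apply \cor{maincorrphi} with the pair $(r,l)$ there replaced by $(j,r)$, which is legal because $r+1\leq j\leq n-1$ in the tail. This yields
\[
\norm{\varphi^j P_n^{(j)}}{p,w_n} \leq (c_*)^{j-r}\,\frac{j!}{r!}\,n^{j-r}\,\norm{\varphi^r P_n^{(r)}}{p,w_n},
\]
and consequently
\[
\norm{R}{p,w_n}^p \leq h^{rp}\,\norm{\varphi^r P_n^{(r)}}{p,w_n}^p \sum_{j=r+1}^{n-1}\binom{j}{r}^p\bigl(hn\cdot rc_*/2\bigr)^{(j-r)p}.
\]
Since $h\leq t\leq \ccc/n$ implies $hn\leq \ccc$, the sum is dominated by $\sum_{k\geq 1}\binom{k+r}{r}^p(\ccc rc_*/2)^{kp}$, a convergent series provided $\ccc rc_*/2<1$. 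Choosing $\ccc$ sufficiently small (depending only on $r$, $p$, and the doubling constant of $w$) makes this $\leq 1/2$, which establishes the two-sided estimate on $\norm{\Delta_{h\varphi}^r(P_n)}{p,w_n}$.

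The second pair of inequalities, concerning $\w_\varphi^r(P_n,t)_{p,w_n} = \sup_{0<h\leq t}\norm{\Delta_{h\varphi}^r(P_n)}{p,w_n}$, follows immediately: the upper bound is obtained by taking the supremum of $(3/2)^{1/p}h^r$ over $h\in(0,t]$, and the lower bound is obtained by the single choice $h=t$ in the already-established lower bound. There is no serious obstacle here; the only delicate point is choosing $\ccc$ small enough so that the tail series is bounded by $1/2$, and verifying that \cor{maincorrphi} applies uniformly across the range $r+1\leq j\leq n-1$, which it does by construction.
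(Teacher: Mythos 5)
Your proposal is correct and follows essentially the same route as the paper's own proof: the same Taylor expansion with the $j=r$ term isolated via $\Delta_1^r((\cdot)^r,0)=r!$, the same use of \cor{maincorrphi} to bound the tail by a convergent series in $(hn)^{j-r}$, and the same choice of $\ccc$ to make the remainder at most $\tfrac12 h^{rp}\norm{\varphi^r P_n^{(r)}}{p,w_n}^p$, after which the $p$-triangle inequality gives both bounds. The only (immaterial) difference is bookkeeping in how $\ccc$ is split between ensuring convergence of the series and making its sum small.
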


\begin{proof} The beginning of the proof is similar to that of \lem{lem5.11}.
We suppose that    $h \leq   t \leq \ccc/n$, where $\ccc$ is a positive constant that we will choose later.
Then using \ineq{taylor} and \ineq{estimated}, and taking into account that
$\Delta_{1}^r \left((\cdot)^r, 0\right) = r!$, we have
\begin{eqnarray*}
\norm{\Delta_{h\varphi}^r (P_n) - h^r\varphi^r P_n^{(r)}}{p, w_n}^p  \leq
 \sum_{j=r+1}^{n-1}   \left( {h^j \over (j-r)!} (r/2)^{j-r} \right)^p
 \int_{-1}^1 w_n(x) \left|\varphi(x)^j P_n^{(j)}(x)\right|^p dx .
\end{eqnarray*}
Using \cor{maincorrphi} we conclude that, for some constant $c_1$ that depends only on $p$ and   the doubling constant of $w$,
\begin{eqnarray*}
\lefteqn{ \int_{-1}^1 w_n(x) \left|\varphi(x)^j P_n^{(j)}(x)\right|^p dx = \norm{\varphi^j P_n^{(j)}}{p, w_n}^p}\\
 &\leq&  \left(  (c_1)^{j-r} {j! \over r!}\right)^p n^{(j-r)p}  \norm{\varphi^r P_n^{(r)}}{p, w_n}^p  , \quad   r+1\leq j\leq n-1 .
\end{eqnarray*}
 Therefore, recalling that $h   \leq \ccc/n$, we have
\begin{eqnarray*}
\norm{\Delta_{h\varphi}^r (P_n) - h^r\varphi^r P_n^{(r)}}{p, w_n}^p  &\leq&
 \sum_{j=r+1 }^{n-1}  \left( {h^j \over (j-r)!} (r/2)^{j-r} (c_1)^{j-r} {j! \over r!} \right)^p  n^{(j-r)p} \norm{\varphi^r P_n^{(r)}}{p, w_n}^p \\
 & \leq &
 h^{rp} \norm{\varphi^r P_n^{(r)}}{p, w_n}^p   \sum_{j=r+1 }^{n-1}  \left(     (\ccc rc_1/2)^{j-r}   {j \choose r} \right)^p \\
  & \leq &
 h^{rp} \norm{\varphi^r P_n^{(r)}}{p, w_n}^p   (\ccc rc_1/2)^p     \sum_{j=r+1 }^{n-1}  \left(     (\ccc rc_1/2)^{j-r-1}   {j \choose r} \right)^p .
\end{eqnarray*}
Now, if $\ccc \leq  1/(rc_1)$, then $\sum_{j=r+1}^{\infty}  \left(1/2\right)^{(j-r-1)p} \left[{j \choose r}\right]^{p} = c_2$, where $c_2$ depends only on $r$ and $p$, and
if $\ccc \leq 2(2c_2)^{-1/p}(rc_1)^{-1}$, then we get
\[
\norm{\Delta_{h\varphi}^r (P_n) - h^r\varphi^r P_n^{(r)}}{p, w_n}^p \leq \frac 12  h^{rp} \norm{\varphi^r P_n^{(r)}}{p, w_n}^p .
\]
Therefore, if we set $\ccc := \min\{1/(rc_1), 2(2c_2)^{-1/p}(rc_1)^{-1}\}$, then we get
\[
\frac 12 h^{rp} \norm{\varphi^r P_n^{(r)}}{p, w_n}^p    \leq   \norm{\Delta_{h\varphi}^r (P_n)}{p, w_n}^p \leq \frac 32 h^{rp} \norm{\varphi^r P_n^{(r)}}{p, w_n}^p .
\]
\end{proof}

 \sect{Weighted polynomial approximation: inverse theorems} \label{inversesection}

\subsection{Auxiliary results}

\begin{lemma} \label{lem5.2}
If $w$ is a doubling weight, $ 0< p\leq \infty$, $f\in\Lp[-1,1]$,
  $n,r\in\N$, $c_* >0$, and $t \leq c_*/n$,  then
\[
\w_\varphi^r(f, t)_{p, w_n}    \leq     c \norm{f}{p, w_n}  ,
\]
where $c$ depends only on  $r$, $c_*$, $p$, and the doubling constant of $w$.
\end{lemma}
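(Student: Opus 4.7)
The statement is a ``Bernstein-step'' estimate: it says that for $h \leq c_*/n$, a single $r$-th difference $\Delta_{h\varphi(x)}^r$ is controlled (in weighted norm) by the function itself. My plan is to expand the difference into $r+1$ point evaluations, pass the weight from the base point $x$ to each evaluation point via the doubling estimate \ineq{ineq1.2}, and then absorb the resulting integral by a change of variables. Specifically, for every $x$ with $x\pm r h\varphi(x)/2 \in [-1,1]$,
\[
|\Delta_{h\varphi(x)}^r(f,x)|^p \leq c_{r,p} \sum_{i=0}^r |f(y_i(x))|^p, \qquad y_i(x) := x + (i-r/2)\,h\,\varphi(x),
\]
where for $p=\infty$ the $p$-th power is replaced by absolute value (and $c_{r,\infty}=1$); for $p\geq 1$ this is Jensen's inequality, and for $0<p<1$ it is the quasi-triangle inequality.

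Since $h\leq c_*/n$ and $\dnx \geq \varphi(x)/n$, we have $|x-y_i(x)| \leq (r/2)h\varphi(x) \leq (rc_*/2)\dnx$. By \ineq{ineq1.2} this yields
\[
w_n(x) \leq c\, w_n(y_i(x)), \qquad i=0,\dots,r,
\]
uniformly on the support of the difference, with $c$ depending only on $r,c_*$ and the doubling constant of $w$. This immediately settles the case $p=\infty$: for every admissible $x$,
\[
w_n(x)\,|\Delta_{h\varphi(x)}^r(f,x)| \leq c\sum_{i=0}^r w_n(y_i(x))\,|f(y_i(x))| \leq c\,\norm{f}{\infty,w_n}.
\]
For $0<p<\infty$, integration reduces the problem to bounding $I_i := \int_{S_h} w_n(y_i(x))\,|f(y_i(x))|^p\,dx$, where $S_h$ is the support of the difference, and this is in turn handled by the substitution $y = y_i(x)$.

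The one genuine computation — and the main obstacle — is to show that on $S_h$ the map $y_i$ is bi-Lipschitz with constants independent of $h$ and $x$, so that we may transport the integral back to $[-1,1]$. Since $y_i'(x) = 1 - (i-r/2)hx/\varphi(x)$, we must bound $|(i-r/2)hx/\varphi(x)|$ away from $1$ on $S_h$. Assume $x\ge 0$ (the case $x\le 0$ is symmetric). The extreme support condition $(rh/2)\varphi(x)\leq 1-x$ together with $\varphi(x)^2=(1-x)(1+x)$ gives, after squaring and dividing by $1-x>0$, the inequality $1-x\ge (rh)^2(1+x)/4$, whence $\varphi(x) \geq (rh/2)\sqrt{1+x}$. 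Consequently,
\[
\bigl|(i-r/2)hx/\varphi(x)\bigr| \leq \frac{(r/2)h\,x}{(rh/2)\sqrt{1+x}} = \frac{x}{\sqrt{1+x}} \leq \frac{1}{\sqrt{2}},
\]
so $1 - 1/\sqrt{2} \leq |y_i'(x)| \leq 1+1/\sqrt{2}$ on $S_h$. Changing variables $y = y_i(x)$ in $I_i$ therefore costs only a constant factor:
\[
I_i \leq c\int_{y_i(S_h)} w_n(y)\,|f(y)|^p\,dy \leq c\,\norm{f}{p,w_n}^p.
\]
Summing over $i$, combining with Step~1, and taking the supremum over $0<h\leq t\leq c_*/n$ yields $\w_\varphi^r(f,t)_{p,w_n} \leq c\,\norm{f}{p,w_n}$, with $c=c(r,c_*,p,L)$ as required.
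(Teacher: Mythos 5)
Your proof is correct and follows essentially the same route as the paper's: expand the symmetric difference into point evaluations, transfer the weight from $x$ to $y_i(x)$ via \ineq{ineq1.2} using $h\varphi(x)\leq c\,\dnx$, and conclude by the substitution $y=y_i(x)$. Your explicit verification that $|y_i'(x)|$ is bounded between $1-1/\sqrt2$ and $1+1/\sqrt2$ on the support of the difference supplies the Jacobian bound that the paper's final change-of-variables step leaves implicit.
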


\begin{proof}
First, we recall that  $\Delta_{h\varphi(x)}^r(f,x) = 0$ if $x\not\in \Dom_{rh/2}$, where
\[
\Dom_\lambda := \left\{ x \st x\neq \pm 1 \andd x \pm \lambda \varphi(x) \in [-1,1] \right\}  = \left\{ x \st |x| \leq {1-\lambda^2 \over 1+ \lambda^2} \right\},
\]
and hence, for $0<p<\infty$,  %note that, for any $n\in\N$ and $t \leq c/n$,
\begin{eqnarray*}
\w_\varphi^r(f, t)_{p, w_n}^p & \leq & c \sup_{0<h\leq t}  \int_{\Dom_{rh/2}} w_n (x)
\left( \sum_{i=0}^r {r \choose i} |f(x+(i-r/2)h \varphi(x)) | \right)^p \, dx\\
& \leq &
c \sup_{0<h\leq t}  \sum_{i=0}^r \int_{\Dom_{rh/2}} w_n (x)
\left|f(x+(i-r/2)h \varphi(x))  \right|^p \, dx .
\end{eqnarray*}
It is clear that, if $h\leq t \leq c/n$, then
$h \varphi(x) \leq c \dnx$.
Therefore,
if $y_i(x) := x+(i-r/2)h \varphi(x)$, $0\leq i\leq r$,  then $|x-y_i(x)| \leq  rh\varphi(x)/2  \leq c \dnx$ and \ineq{ineq1.2} implies that
$w_n(x)\sim w_n(y_i(x))$.
Hence,
\begin{eqnarray*}
\w_\varphi^r(f, t)_{p, w_n}^p  & \leq &
c \sup_{0<h\leq t}  \sum_{i=0}^r \int_{\Dom_{rh/2}} w_n (y_i(x))
\left|f(y_i(x))  \right|^p \, dx \\
& \leq & c    \int_{-1}^1 w_n (y) \left|f(y)  \right|^p \, dy  \leq c \norm{f}{p, w_n}^p  .
\end{eqnarray*}
In the case $p=\infty$, the needed modifications in the proof are obvious.
\end{proof}

\begin{lemma} \label{lem8.4}
Let $w$ be a doubling weight,
  $n,r\in\N$, $c_*>0$, $t \leq c_*/n$, $1\leq p\leq \infty$. If $f$ has the $(r-1)$st locally absolutely continuous derivative on $(-1,1)$ and $\norm{\varphi^r f^{(r)}}{p} < \infty$, then
\[
\w_\varphi^r(f, t)_{p, w_n} \leq c t^r \norm{  \varphi^r f^{(r)}}{p, w_n} ,
\]
where $c$ depends only on  $r$, $c_*$, $p$, and the doubling constant of $w$.
\end{lemma}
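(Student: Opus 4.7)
\medskip\noindent
\textbf{Proof plan.} The plan is to represent the $r$-th symmetric difference as an $r$-fold integral of $f^{(r)}$ and then transfer the resulting weighted $\Lp$ norm across a change of variables. Since $\Delta_{h\varphi(x)}^r(f,x)=0$ off $\Dom_{rh/2}$ we may restrict to $x\in\Dom_{rh/2}$, iterate the identity $\Delta_h(g,y)=\int_{-h/2}^{h/2}g'(y+s)\,ds$, and substitute $s_i=u_i\varphi(x)$ to obtain
\[
\Delta_{h\varphi(x)}^r(f,x) = \varphi(x)^r \int_{-h/2}^{h/2}\!\!\cdots\!\int_{-h/2}^{h/2} f^{(r)}\bigl(x+\varphi(x)(u_1+\cdots+u_r)\bigr)\,du_1\cdots du_r.
\]
Since $h\le t\le c_*/n$, every $z=x+\varphi(x)(u_1+\cdots+u_r)$ with $u_i\in[-h/2,h/2]$ satisfies $|z-x|\le(rc_*/2)\dnx$, so \ineq{ineq1.2} yields $w_n(z)\sim w_n(x)$, and a standard Ditzian--Totik comparison (based on $1-|x|\gtrsim(rh/2)^2$ on $\Dom_{rh/2}$) yields $\varphi(z)\sim\varphi(x)$.

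For $p=\infty$ these two comparisons suffice: pointwise, $w_n(x)\varphi(x)^r|f^{(r)}(z)|\le c\,w_n(z)\varphi(z)^r|f^{(r)}(z)|\le c\norm{\varphi^r f^{(r)}}{\infty,w_n}$, and plugging this into the representation above and taking the supremum over $0<h\le t$ gives the claim. For $1\le p<\infty$, H\"older's inequality applied to the $r$-fold integral produces
\[
|\Delta_{h\varphi(x)}^r(f,x)|^p \le h^{r(p-1)}\varphi(x)^{rp}\int_{[-h/2,h/2]^r}\bigl|f^{(r)}(x+u\varphi(x))\bigr|^p du_1\cdots du_r,
\]
where $u:=u_1+\cdots+u_r$. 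Integrating against $w_n$ and applying Fubini reduces the proof to showing that, uniformly in $u$ with $|u|\le rh/2$,
\[
\int_{-1}^1 \varphi(x)^{rp} w_n(x) |f^{(r)}(x+u\varphi(x))|^p\,dx \;\le\; c\,\norm{\varphi^r f^{(r)}}{p,w_n}^p ,
\]
for then $\norm{\Delta_{h\varphi}^r(f)}{p,w_n}^p\le c\,h^{rp}\norm{\varphi^r f^{(r)}}{p,w_n}^p$ and taking the supremum over $0<h\le t$ completes the argument.

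The last displayed inequality I would prove via the change of variables $y=x+u\varphi(x)$, whose Jacobian is $dy/dx=1-ux/\varphi(x)$. The main (and essentially only) obstacle is to show that on $\Dom_{rh/2}$ each of the three ratios $w_n(y)/w_n(x)$, $\varphi(y)/\varphi(x)$, and $|dy/dx|$ is bounded above and below by positive constants depending only on $r$, $c_*$ and the doubling constant of $w$. The first again follows from $|y-x|\le(rc_*/2)\dnx$ together with \ineq{ineq1.2}; the second is the same Ditzian--Totik comparison already used above; the third is a direct computation exploiting $|u|\le rh/2$ together with the lower bound $1-|x|\gtrsim(rh/2)^2$ valid throughout $\Dom_{rh/2}$, which forces $|ux/\varphi(x)|$ to stay strictly below $1$ (in fact one can show it is bounded by a constant depending only on $r$ and $c_*$). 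Once these three comparisons are in hand, the change of variables bounds the inner integral by $c\norm{\varphi^r f^{(r)}}{p,w_n}^p$, completing the proof.
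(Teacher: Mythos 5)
Your overall architecture (the $r$-fold integral representation of $\Delta^r_{h\varphi(x)}(f,x)$ in terms of $f^{(r)}$, H\"older's inequality, and a weight transfer) matches the paper's, and two of your three comparisons are sound: $|z-x|\le (rc_*/2)\dnx$ does give $w_n(z)\sim w_n(x)$ via \ineq{ineq1.2}, and the Jacobian $1-ux/\varphi(x)$ is indeed bounded between $1/2$ and $3/2$ on $\Dom_{rh/2}$ for $|u|\le rh/2$. The fatal step is the claim that $\varphi(z)\sim\varphi(x)$ for $z=x+u\varphi(x)$ with $x\in\Dom_{rh/2}$ and $|u|\le rh/2$. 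This is false: writing $\lambda:=rh/2$, the right endpoint $x_0=(1-\lambda^2)/(1+\lambda^2)$ of $\Dom_\lambda$ satisfies $x_0+\lambda\varphi(x_0)=1$, so with $u=\lambda$ one gets $\varphi(z)=0$ while $\varphi(x_0)=2\lambda/(1+\lambda^2)>0$, and the ratio $\varphi(z)/\varphi(x)$ degenerates throughout a neighbourhood of the endpoints of $\Dom_\lambda$. As a consequence the inequality you reduce the lemma to,
\[
\int_{\Dom_{rh/2}}\varphi(x)^{rp}w_n(x)\,|f^{(r)}(x+u\varphi(x))|^p\,dx\;\le\;c\,\norm{\varphi^rf^{(r)}}{p,w_n}^p ,
\]
is not merely unproved but false. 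Take $r=p=1$, $w\equiv1$, $u=h/2$, and $f'=\e^{-1}\chi_{[1-\e,1]}$ with $\e\ll h^2$: the right-hand side is $O(\sqrt{\e}\,)$, whereas the preimage of $[1-\e,1]$ under $x\mapsto x+u\varphi(x)$ is an interval of length at least $\e$ on which $\varphi(x)\ge h/2$, so the left-hand side is at least $h/2$. (The lemma itself survives this example because $|\Delta_{h\varphi(x)}f(x)|\le 1$ there, but your intermediate bound does not.) The same defect breaks the $p=\infty$ step $w_n(x)\varphi(x)^r|f^{(r)}(z)|\le c\,w_n(z)\varphi(z)^r|f^{(r)}(z)|$, which needs $\varphi(x)\le c\varphi(z)$.

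This endpoint degeneracy is precisely what the paper's proof is engineered to avoid: instead of comparing $\varphi$ at $x$ with $\varphi$ at the shifted point, it applies H\"older only in the innermost integration variable, over the short interval $\A(x,u)$ of length $h\varphi(x)$, producing the factor $\norm{\varphi^{-r}}{\L_{p'}(\A(x,u))}$ alongside $\norm{w_n^{1/p}\varphi^rf^{(r)}}{\Lp(\A(x,u))}$, and then invokes the averaged Hardy-type estimates \ineq{mess1}--\ineq{mess4} (proved in \cite{kls-ca}) to assemble these local contributions into $c\,h^{rp}\norm{w_n^{1/p}\varphi^rf^{(r)}}{p}^p$. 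Keeping $\varphi^{-r}$ integrated over $\A(x,u)$, rather than evaluated pointwise, is what absorbs the loss of powers of $\varphi$ when $\A(x,u)$ approaches $\pm1$; any repair of your argument must incorporate some substitute for this averaging near the endpoints of $\Dom_{rh/2}$.
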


We remark that it is well known that, in general, \lem{lem8.4} is not true for $0<p<1$ and, in fact, one can show that, for every $M\in\R$ and $n\in\N$, there exists an absolutely continuous function $f$ on $[-1,1]$ such that
$E_n(f, [-1,1])_p  > M\norm{f'}{p}$.

\begin{proof}
If $f$ has the $(r-1)$st absolutely continuous derivative, then
\[
  \Delta_h^r(f, x) = \int_{-h/2}^{h/2}  \dots \int_{-h/2}^{h/2} f^{(r)}(x + t_1 + \dots + t_r) dt_r \dots dt_1 .
\]
In the case $1\leq p <\infty$, if $h\leq c/n$, we have
\begin{eqnarray*}
\lefteqn{ \left( \int_{-1}^1  w_n(x) |  \Delta_{h\varphi(x)}^r(f, x)|^p dx\right)^{1/p} }\\
& \leq &
\left(  \int_{\Dom_{rh/2}}  \left[ \int_{-h\varphi(x)/2}^{h\varphi(x)/2} \dots \int_{-h\varphi(x)/2}^{h\varphi(x)/2} w_n^{1/p}(x)   |f^{(r)}(x + t_1 + \dots + t_r   )| dt_r \dots dt_1\right]^p dx  \right)^{1/p} \\
& \leq &
c\left(  \int_{\Dom_{rh/2}}  \left[ \int_{-h\varphi(x)/2}^{h\varphi(x)/2} \dots \int_{-h\varphi(x)/2}^{h\varphi(x)/2} w_n^{1/p}(x + t_1 + \dots + t_r)  \right. \right.\\
 &&  \left. \left. \; \times\; |f^{(r)}(x + t_1 + \dots + t_r   )| dt_r \dots dt_1\right]^p dx  \right)^{1/p} .
\end{eqnarray*}
By H\"{o}lder's inequality, for each $u$ satisfying $-1 < x+u - h\varphi(x)/2 < x+u + h\varphi(x)/2 <1$, we have
\begin{eqnarray*}
  \int_{-h\varphi(x)/2}^{h\varphi(x)/2} w_n^{1/p}(x + u + t_r)   |f^{(r)}(x + u  + t_r   )| dt_r   & = &
\int_{x+u-h\varphi(x)/2}^{x+u+h\varphi(x)/2} w_n^{1/p}(v)   |f^{(r)}(v)| dv  \\
& \leq &
\norm{ w_n^{1/p} \varphi^r f^{(r)}}{\Lp(\A(x,u))}  \norm{\varphi^{-r}}{\L_{p'} (\A(x,u))} ,
\end{eqnarray*}
where $1/p+1/p'=1$ and
\[
\A(x,u) := \left[ x+u-h\varphi(x)/2, x+u+h\varphi(x)/2 \right] .
\]
The needed estimate now follows from
\begin{eqnarray} \label{mess1}
\lefteqn{ \int_{\Dom_{rh/2}}  \left[ \int_{-h\varphi(x)/2}^{h\varphi(x)/2} \dots \int_{-h\varphi(x)/2}^{h\varphi(x)/2}  \norm{\varphi^{-r}}{\L_{p'} (\A(x,t_1+\dots+t_{r-1}))} \right. } \\ \nonumber
&& \left. \times
\norm{ w_n^{1/p} \varphi^r f^{(r)}}{\Lp(\A(x,t_1+\dots+t_{r-1}))}     dt_{r-1}  \dots dt_1\right]^p dx  \leq c h^{rp}\norm{ w_n^{1/p} \varphi^r f^{(r)}}{p}^p , %\quad 1\leq p < \infty .
\end{eqnarray}
where $1 \leq p < \infty$.
In the case $p=\infty$, an analogous sequence of estimates yields
\begin{eqnarray} \label{mess2}
\lefteqn{ \sup_{x\in \Dom_{rh/2}}   \int_{-h\varphi(x)/2}^{h\varphi(x)/2} \dots \int_{-h\varphi(x)/2}^{h\varphi(x)/2}  \norm{\varphi^{-r}}{\L_{1} (\A(x,t_1+\dots+t_{r-1}))}   } \\ \nonumber
&&   \times
\norm{ w_n  \varphi^r f^{(r)}}{\L_\infty(\A(x,t_1+\dots+t_{r-1}))}     dt_{r-1}  \dots dt_1    \leq c h^{r}\norm{ w_n \varphi^r f^{(r)}}{\infty}    .
\end{eqnarray}
Note that, in the case $r=1$, estimates \ineq{mess1} and \ineq{mess2} are understood, respectively, as
\begin{eqnarray} \label{mess3}
 \int_{\Dom_{h/2}}      \norm{\varphi^{-1}}{\L_{p'} (\A(x,0))}^p
\norm{ w_n^{1/p} \varphi  f'}{\Lp(\A(x,0))}^p   dx  \leq c h^{p}\norm{ w_n^{1/p} \varphi f'}{p}^p , \quad 1\leq p < \infty ,
\end{eqnarray}
and
\begin{eqnarray} \label{mess4}
  \sup_{x\in \Dom_{h/2}}    \norm{\varphi^{-1}}{\L_{1} (\A(x,0))}
\norm{ w_n  \varphi  f' }{\L_\infty(\A(x,0))}      \leq c h \norm{ w_n \varphi  f' }{\infty} , \quad p=\infty .
\end{eqnarray}
Estimates \ineq{mess1}-\ineq{mess4}  were proved in \cite{kls-ca} (see (4.2)-(4.4) there with $r=0$, variable ``$k$'' replaced by ``$r$'', $g^{(r)}$ replaced by $w_n^{1/p} f^{(r)}$ with $1/\infty := 1$, and noting that
$\Dom_\lambda$ in \cite{kls-ca} is actually $\Dom_{\lambda/2}$ in the current paper).
\end{proof}

\subsection{Inverse  theorem: the case $1\leq p\leq \infty$}

Recall the following notation that was used in the introduction
\[ %\label{gpnot}
\gp :=
\begin{cases}
p , & \mbox{\rm if }   p < \infty ,\\
1,  & \mbox{\rm if }  p = \infty .
\end{cases}
\]

\begin{theorem} \label{conversethm}
Let $r\in\N$, $1\leq p \leq \infty$, and $f\in\Lp[-1,1]$.
Suppose that $w$ is  a doubling weight from the class $\Wdg_\Lambda$ with $\gamma\leq r \gp$.
Then
\[ %\label{converseinequality}
\w_\varphi^r (f, n^{-1})_{p, w_n}  \leq  {c \over n^{r-\delta/\gp}}   \sum_{k=1}^{n}    k^{r-1 -\delta/\gp }     E_{k}(f)_{p,w_{k}} ,
\]
where
the constant $c$ depends only on $r$, $p$, $\delta$, $\gamma$, $\Lambda$, and the doubling constant of the weight $w$.
\end{theorem}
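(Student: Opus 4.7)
The plan follows the classical dyadic-decomposition strategy for inverse theorems, with the class $\Wdg_\Lambda$ playing the decisive role in transferring estimates between the weights $w_n$ and $w_m$ for $m\leq n$. For each $k\in\N$ I will choose $P_k\in\Poly_k$ with $\norm{f-P_k}{p,w_k}\leq 2E_k(f)_{p,w_k}$, and split
\[
\w_\varphi^r(f,1/n)_{p,w_n}\leq \w_\varphi^r(f-P_n,1/n)_{p,w_n}+\w_\varphi^r(P_n,1/n)_{p,w_n}.
\]
By \lem{lem5.2} the first summand is at most $c\norm{f-P_n}{p,w_n}\leq cE_n(f)_{p,w_n}$, which accounts for the $k=n$ term of the target sum. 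By \lem{lem8.4} the second is at most $c n^{-r}\norm{\varphi^r P_n^{(r)}}{p,w_n}$ (this is where the hypothesis $1\leq p\leq\infty$ enters, since \lem{lem8.4} fails for $p<1$), so the task reduces to estimating $\norm{\varphi^r P_n^{(r)}}{p,w_n}$.

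Set $m_j:=2^j$ and let $N$ be the largest integer with $m_N\leq n$. I telescope
\[
P_n = P_{m_0}+\sum_{j=0}^{N-1}(P_{m_{j+1}}-P_{m_j})+(P_n-P_{m_N}),
\]
noting that $P_{m_0}^{(r)}\equiv 0$, $Q_j:=P_{m_{j+1}}-P_{m_j}\in\Poly_{m_{j+1}}$, and $P_n-P_{m_N}\in\Poly_n$. The main step will be the estimate
\[
\norm{\varphi^r Q^{(r)}}{p,w_n}\leq c\, n^{\delta/\gp}\, m^{r-\delta/\gp}\,\norm{Q}{p,w_m},\qquad Q\in\Poly_m,\ m\leq n.
\]
To prove this I would factor $\varphi^r=\varphi^\gamma\cdot\varphi^{r-\gamma}$, invoke \ineq{touse} to replace $w_n\varphi^\gamma$ by $\Lambda n^\delta m^{\gamma-\delta}\dm^\gamma w_m$, then use $\varphi\leq m\dm$ to absorb $\varphi^{r-\gamma}$ into $m^{r-\gamma}\dm^{r-\gamma}$ (the hypothesis $\gamma\leq r\gp$ is exactly what keeps the surplus exponent $rp-\gamma$, resp.\ $r-\gamma$, nonnegative), combine $\dm^\gamma\dm^{r-\gamma}=\dm^r$, and finally apply the Markov-Bernstein inequality of \lem{lem8.2} in the weight $w_m$ to replace $\norm{\dm^r Q^{(r)}}{p,w_m}$ by $c\norm{Q}{p,w_m}$. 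The calculation will be performed on the $p$-th power of the $\Lp$-norm when $p<\infty$, and pointwise in essential-supremum form when $p=\infty$.

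Applying the main estimate to each $Q_j$ (with $m=m_{j+1}$) and to $P_n-P_{m_N}$ (with $m=n$), and using the triangle inequality together with the equivalence $w_{m_{j+1}}\sim w_{m_j}$ (bounded ratio of indices) to bound $\norm{Q_j}{p,w_{m_{j+1}}}\leq c\bigl(E_{m_j}(f)_{p,w_{m_j}}+E_{m_{j+1}}(f)_{p,w_{m_{j+1}}}\bigr)$, I will obtain
\[
n^{-r}\norm{\varphi^r P_n^{(r)}}{p,w_n}\leq c\, n^{-(r-\delta/\gp)}\sum_{j=0}^{N} m_j^{\,r-\delta/\gp}E_{m_j}(f)_{p,w_{m_j}},
\]
which the standard dyadic-to-linear summation (using $w_k\sim w_{2^j}$ for $2^{j-1}<k\leq 2^j$ to compare $E_k$ across dyadic levels) rewrites as $cn^{-(r-\delta/\gp)}\sum_{k=1}^n k^{r-1-\delta/\gp}E_k(f)_{p,w_k}$, completing the proof. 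The main obstacle will be the central weight-transfer estimate: one must precisely match the exponents coming from \ineq{touse}, the pointwise inequality $\varphi\leq m\dm$, and \lem{lem8.2}, verify that $\gamma\leq r\gp$ is exactly what keeps the surplus exponent nonnegative, and check that the same clean formula reads identically for $p<\infty$ and $p=\infty$; once this bookkeeping is carried out, the remainder is routine triangle-inequality and dyadic summation.
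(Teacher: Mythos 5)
Your proposal is correct and follows essentially the same route as the paper's proof: dyadic telescoping of near-best polynomials, \lem{lem5.2} for the $f-P_n$ term, \lem{lem8.4} to pass from the modulus to $\varphi^r P^{(r)}$, the $\Wdg_\Lambda$ condition \ineq{touse} combined with $\varphi\leq m\dm$ (using $\gamma\leq r\gp$) to transfer weights, and \lem{lem8.2} to remove the derivative. The only cosmetic differences are that you apply \lem{lem8.4} once to $P_n$ and telescope the derivative norm (the paper telescopes inside the modulus first), and you carry an extra harmless tail term $P_n-P_{m_N}$.
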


Taking into account that any doubling weight belongs to the class $\W^{1,1}_1$   (see Remark~\ref{remcr}) and that $\gamma = 1 \leq r \gp$, for all $r\in\N$ and $1\leq p\leq \infty$, we immediately get the following corollary of \thm{conversethm}.

\begin{corollary} \label{conversethmcor}
Let $w$ be a doubling weight, $r\in\N$, $1\leq p \leq \infty$, and $f\in\Lp[-1,1]$. Then
\[ % \label{converseinequalitycor}
\w_\varphi^r (f, n^{-1})_{p, w_n}  \leq  {c \over n^{r-1/\gp}}   \sum_{k=1}^{n}    k^{r-1 -1/\gp }     E_{k}(f)_{p,w_{k}} ,
\]
 the constant $c$ depends only on $r$, $p$ and the doubling constant of $w$.
\end{corollary}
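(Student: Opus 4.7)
The proof is a direct specialization of Theorem \ref{conversethm} to the parameters $\delta = \gamma = 1$, so the plan reduces to verifying the hypotheses and reading off the conclusion. First, by Remark \ref{remcr}, the pair $(1,1)$ belongs to $\Upsilon$ (indeed $1 \geq 1$, $1 \geq 0$, and $1+1 \geq 2$), hence every doubling weight lies in $\W^{1,1}_\Lambda$ with $\Lambda$ depending only on the doubling constant of $w$. Second, the constraint $\gamma \leq r\gp$ required by Theorem \ref{conversethm} reduces to $1 \leq r\gp$, which is automatic for $r \in \N$ and $1 \leq p \leq \infty$ since $\gp = \max\{p,1\} \geq 1$. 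Third, substituting $\delta = 1$ into the conclusion of Theorem \ref{conversethm} yields precisely the stated bound, and the dependence of the resulting constant $c$ on $\delta$, $\gamma$, $\Lambda$ collapses to dependence on the doubling constant of $w$. That is the entire proof of the corollary.

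Since the mathematical substance lives in Theorem \ref{conversethm}, a realistic ``how would I prove it'' description points back to that theorem's proof. My plan there would be a dyadic telescoping argument: choose $L$ with $2^L \leq n < 2^{L+1}$, fix near-best approximants $P_k \in \Poly_{2^k}$ with $\norm{f - P_k}{p, w_{2^k}} \leq 2 E_{2^k}(f)_{p, w_{2^k}}$, and split $f = P_L + (f - P_L)$ together with $P_L = P_0 + \sum_{k=0}^{L-1} (P_{k+1} - P_k)$. The tail $\w_\varphi^r(f - P_L, n^{-1})_{p, w_n}$ is controlled by Lemma \ref{lem5.2} followed by the equivalence $w_n \sim w_{2^L}$, yielding a multiple of $E_{2^L}(f)_{p, w_{2^L}}$. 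For each dyadic block $Q_k := P_{k+1} - P_k \in \Poly_{2^{k+1}}$, Lemma \ref{lem8.4} gives $\w_\varphi^r(Q_k, n^{-1})_{p, w_n} \leq c n^{-r} \norm{\varphi^r Q_k^{(r)}}{p, w_n}$.

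The one genuine obstacle is converting $\norm{\varphi^r Q_k^{(r)}}{p, w_n}$ into a quantity controlled by $\norm{Q_k}{p, w_{2^{k+1}}}$, and this is where the $\Wdg_\Lambda$ hypothesis together with the condition $\gamma \leq r\gp$ earn their keep. Writing
\[
|\varphi^r Q_k^{(r)}|^{\gp}\, w_n \;=\; \varphi^{r\gp - \gamma} \cdot (w_n\, \varphi^\gamma) \cdot |Q_k^{(r)}|^{\gp}
\]
(with the evident reading when $p = \infty$), substituting the defining inequality $w_n \varphi^\gamma \leq \Lambda n^\delta m^{\gamma - \delta} \dm^\gamma w_m$ with $m = 2^{k+1}$, and using $\varphi \leq m \dm$ to absorb the leftover factor $\varphi^{r\gp - \gamma}$ (whose exponent is nonnegative precisely because $\gamma \leq r\gp$), one obtains
\[
\norm{\varphi^r Q_k^{(r)}}{p, w_n} \leq c\, n^{\delta/\gp}\, m^{r - \delta/\gp}\, \norm{\dm^r Q_k^{(r)}}{p, w_m},
\]
after which Lemma \ref{lem8.2} gives $\norm{\dm^r Q_k^{(r)}}{p, w_m} \leq c \norm{Q_k}{p, w_m}$. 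Bounding $\norm{Q_k}{p, w_{2^{k+1}}} \leq c \bigl(E_{2^k}(f)_{p, w_{2^k}} + E_{2^{k+1}}(f)_{p, w_{2^{k+1}}}\bigr)$ by the triangle inequality and the equivalence $w_{2^k} \sim w_{2^{k+1}}$, and summing in $k$, converts the dyadic sum into the full sum $\sum_{k=1}^n k^{r - 1 - \delta/\gp} E_k(f)_{p, w_k}$ by the standard block estimate $\sum_{k = 2^{j-1}}^{2^j - 1} k^{r - 1 - \delta/\gp} E_k(f)_{p, w_k} \geq c\, (2^j)^{r - \delta/\gp} E_{2^j}(f)_{p, w_{2^j}}$, which uses only monotonicity of $E_m$ and the equivalence of $w_k$ across each dyadic block. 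Setting $\delta = 1$ at the end recovers the corollary.
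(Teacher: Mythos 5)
Your proposal is correct and follows the paper's route exactly: the corollary is obtained by specializing Theorem \ref{conversethm} to $\delta=\gamma=1$ (every doubling weight lies in $\W^{1,1}_1$ by Remark \ref{remcr}, and $\gamma=1\leq r\gp$ is automatic), and your sketch of the underlying theorem — dyadic telescoping of near-best approximants, Lemma \ref{lem5.2} for the tail, Lemma \ref{lem8.4} for each block, the $\Wdg_\Lambda$ inequality with $\varphi\leq m\dm$ to transfer weights, and the Markov--Bernstein Lemma \ref{lem8.2} — is the paper's own argument. One cosmetic slip: $\gp$ is not $\max\{p,1\}$ (it equals $1$ when $p=\infty$), but the fact you actually use, $\gp\geq 1$, still holds.
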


\begin{remark}
Since any weight  that satisfies the $A^*$ property is in $\W^{0,0}$ (see Remark~\ref{remast}), it immediately follows from \thm{conversethm} that, for $A^*$ weights $w$, we have
 \[
\w_\varphi^r (f, n^{-1})_{p, w_n}  \leq  {c \over n^{r}}   \sum_{k=1}^{n}    k^{r-1 }     E_{k}(f)_{p,w_{k}} , \quad 1\leq p\leq \infty .
\]
In the case $p=\infty$, this is the ``inverse'' part of \cite[Theorem 1.3]{mt2001}.
\end{remark}

\begin{proof}[Proof of \thm{conversethm}]
Let $P_n^* \in \Poly_n$ denote a polynomial of (near) best approximation to $f$ with weight $w_n$, \ie
\[
c \norm{f-P_n^*}{p,w_n} \leq   \inf_{P_n\in\Poly_n} \norm{f-P_n}{p,w_n} =  E_{n}(f)_{p,w_n} .
\]
We let $N\in\N$ be such that $2^N \leq n < 2^{N+1}$ and denote $m_j := 2^j$.
Then recalling that $w_n(x)\sim w_m(x)$ if $n \sim m$, and using \lem{lem5.2} we have
\begin{eqnarray*}
\w_\varphi^r (f, n^{-1})_{p, w_n} & \leq & \w_\varphi^r (f, 2^{-N})_{p, w_n} \\
& \leq & \w_\varphi^r (f - P_{m_N}^*, 2^{-N})_{p, w_n} + \w_\varphi^r (P_{m_N}^*, 2^{-N})_{p, w_n} \\
& \leq & c \norm{f - P_{m_N}^*}{p, w_{m_N}} + \w_\varphi^r (P_{m_N}^*, 2^{-N})_{p, w_{m_N}} \\
& \leq &
 c E_{m_N}(f)_{p,w_{m_N}} + \w_\varphi^r (P_{m_N}^*, 2^{-N})_{p, w_{m_N}}.
\end{eqnarray*}

Now, the fact that $w\in \Wdg_\Lambda$ implies (see \ineq{touse})
\[
w_{m_N}(x) \varphi(x)^{\gamma} \leq  \Lambda  m_N^\delta    m_j^{\gamma-\delta} \dlt_{m_j}(x)^{\gamma}  w_{m_j}(x)  , \quad  0\leq j \leq N .
\]

Hence, using
\[
P_{m_N}^* =  P_1^* + \sum_{j=0}^{N-1} (P_{m_{j+1}}^* - P_{m_{j}}^*)
\]
and Lemmas~\ref{lem8.4} and \ref{lem8.2}  we have
\begin{eqnarray*}
\w_\varphi^r (P_{m_N}^*, 2^{-N})_{p, w_{m_N}}  &\leq &
 \sum_{j=0}^{N-1} \w_\varphi^r  \left( P_{m_{j+1}}^* - P_{m_{j}}^*, 2^{-N}\right)_{p, w_{m_N}}  \\
 &\leq &
 c \sum_{j=0}^{N-1}   2^{-Nr } \norm{w_{m_N}^{1/\gp} \varphi^r \left(P_{m_{j+1}}^* - P_{m_{j}}^*\right)^{(r)}}{p}   \\
  &\leq &
  c \sum_{j=0}^{N-1}   2^{-Nr }  \norm{  m_N^{\delta/\gp}    m_j^{(\gamma-\delta)/\gp}  \dlt_{m_j}^{\gamma/\gp} \varphi^{r-\gamma/\gp}    w_{m_j}^{1/\gp}      \left(P_{m_{j+1}}^* - P_{m_{j}}^*\right)^{(r)}}{p}.
\end{eqnarray*}
Since $r-\gamma/\gp \geq 0$ and $\varphi \leq m_j \dlt_{m_j}$, this yields

\begin{eqnarray*}
\w_\varphi^r (P_{m_N}^*, 2^{-N})_{p, w_{m_N}}
 &\leq &
  c \sum_{j=0}^{N-1}   2^{-Nr }  \norm{  m_N^{\delta/\gp}  m_j^{r-\delta/\gp}     \dlt_{m_j}^{r}     w_{m_j}^{1/\gp}      \left(P_{m_{j+1}}^* - P_{m_{j}}^*\right)^{(r)}}{p}   \\
  &\leq &
  c \sum_{j=0}^{N-1}   2^{-(N-j)(r -\delta/\gp) }  \norm{  \dlt_{m_j}^{r}   w_{m_j}^{1/\gp}      \left(P_{m_{j+1}}^* - P_{m_{j}}^*\right)^{(r)}}{p}   \\
  &\leq &
  c \sum_{j=0}^{N-1}   2^{-(N-j)(r -\delta/\gp) }  \norm{    w_{m_j}^{1/\gp}     \left(P_{m_{j+1}}^* - P_{m_{j}}^*\right) }{p}   \\
  &\leq &
  c \sum_{j=0}^{N-1}   2^{-(N-j)(r -\delta/\gp) }    E_{m_j}(f)_{p,w_{m_j}} .
\end{eqnarray*}
Therefore,
\[
\w_\varphi^r (f, n^{-1})_{p, w_n} \leq c \sum_{j=0}^{N}   2^{-(N-j)(r -\delta/\gp) }    E_{m_j}(f)_{p,w_{m_j}} ,
 \]
and so
\begin{eqnarray*}
\w_\varphi^r (f, n^{-1})_{p, w_n} &\leq&  {c \over n^{r-\delta/\gp}}  \sum_{j=0}^{N}   2^{j(r -\delta/\gp) }    E_{m_j}(f)_{p,w_{m_j}} \\
&\leq &
{c \over n^{r-\delta/\gp}} \left(E_{1}(f)_{p,w_{1}} +  \sum_{j=1}^{N}  \sum_{k=m_{j-1}+1}^{m_{j}} k^{r-1 -\delta/\gp }     E_{k}(f)_{p,w_{k}} \right) \\
& \leq &
{c \over n^{r-\delta/\gp}}   \sum_{k=1}^{n}    k^{r-1 -\delta/\gp}     E_{k}(f)_{p,w_{k}} .
\end{eqnarray*}
\end{proof}

We have the following immediate corollaries of Theorems~\ref{jacksonthm} and \ref{conversethm}. % and \cor{conversethmcor}.

\begin{corollary}
Let $r\in\N$,   $1\leq p \leq \infty$ and $f\in\Lp[-1,1]$. Suppose that
 $w$ is a doubling weight from the class $\Wdg$ with $\gamma \leq r\gp$.    Then, for $0<\a< r-\delta/\gp$, we have
\[
E_n(f, [-1,1])_{p, w_n} = O(n^{-\a}) \iff  \w_\varphi^r(f, n^{-1})_{p, w_n} = O(n^{-\a}) .
\]
\end{corollary}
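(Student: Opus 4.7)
My plan is to derive the corollary as a direct consequence of the Jackson-type theorem (Theorem~\ref{jacksonthm}) and the inverse theorem (Theorem~\ref{conversethm}), with the only real work being the elementary estimation of a power sum. No new tools are needed, so this corollary is essentially a bookkeeping step.

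For the implication $\w_\varphi^r(f,n^{-1})_{p,w_n} = O(n^{-\a}) \Rightarrow E_n(f)_{p,w_n} = O(n^{-\a})$, I would invoke Theorem~\ref{jacksonthm} with $\ccc := 1$ (or any fixed $\ccc \in (0,1]$), which gives
\[
E_n(f)_{p,w_n} \leq c \, \w_\varphi^r(f,\ccc/n)_{p,w_n} \leq c\, \w_\varphi^r(f,1/n)_{p,w_n} = O(n^{-\a}),
\]
where monotonicity of the modulus in $t$ (which is automatic from its definition as a supremum over $0<h\leq t$) handles the $\ccc$ versus $1$ issue.

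For the reverse implication, I would apply Theorem~\ref{conversethm} directly. Since $w$ lies in $\Wdg$ with $\gamma \leq r\gp$ and $1\leq p\leq\infty$, we have
\[
\w_\varphi^r(f,n^{-1})_{p,w_n} \leq \frac{c}{n^{r-\delta/\gp}} \sum_{k=1}^n k^{r-1-\delta/\gp} E_k(f)_{p,w_k}.
\]
Plugging in the hypothesis $E_k(f)_{p,w_k} \leq C k^{-\a}$ gives
\[
\w_\varphi^r(f,n^{-1})_{p,w_n} \leq \frac{cC}{n^{r-\delta/\gp}} \sum_{k=1}^n k^{r-1-\delta/\gp-\a}.
\]
The exponent $r-1-\delta/\gp-\a$ satisfies $r-1-\delta/\gp-\a > -1$ precisely because of the hypothesis $\a < r-\delta/\gp$, so the sum behaves like $n^{r-\delta/\gp-\a}$ (with a constant depending on $r-\delta/\gp-\a$, hence on $r$, $p$, $\delta$, $\gp$, $\a$). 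Combining,
\[
\w_\varphi^r(f,n^{-1})_{p,w_n} \leq c' n^{-\a},
\]
which is the desired conclusion.

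The only possible subtlety is the boundary case $\a = r-\delta/\gp$, where the sum would pick up a logarithmic factor; this is exactly why the statement restricts to $\a < r-\delta/\gp$ strictly, so no obstacle arises. Every constant in the argument depends only on the data permitted by the two invoked theorems (namely $r$, $p$, $\delta$, $\gamma$, $\Lambda$, $\a$, and the doubling constant of $w$), so nothing beyond the quoted estimates is required.
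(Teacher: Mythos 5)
Your proposal is correct and follows exactly the route the paper intends: the paper states this corollary without proof as an immediate consequence of Theorem~\ref{jacksonthm} (for the direct implication) and Theorem~\ref{conversethm} (for the inverse implication via the standard power-sum estimate, where $\a < r-\delta/\gp$ guarantees the exponent in the sum exceeds $-1$). Your handling of the constant $\ccc$ and of the boundary case is consistent with what the paper leaves implicit.
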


Again, taking into account that any doubling weight belongs to the class $\W^{1,1}_1$   and that $1 \leq r \gp$, for all $r\in\N$ and $1\leq p\leq \infty$, we   get the following corollaries
(or one can obtain them  as a consequence of  \thm{jacksonthm} and \cor{conversethmcor}).

\begin{corollary}[$1 < p < \infty$ and all doubling weights]
Let $w$ be a doubling weight, $r\in\N$,   $1 < p< \infty$ and $f\in\Lp[-1,1]$.   Then, for $0<\a< r-1/p$, we have
\[
E_n(f, [-1,1])_{p, w_n} = O(n^{-\a}) \iff  \w_\varphi^r(f, n^{-1})_{p, w_n} = O(n^{-\a}) .
\]
\end{corollary}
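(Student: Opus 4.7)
The plan is to obtain both implications by combining the Jackson direct estimate of Theorem~\ref{jacksonthm} with the inverse estimate of Corollary~\ref{conversethmcor}, together with a standard summation estimate. Recall that for $1<p<\infty$ we have $\gp=p$, so both results apply with the exponent $1/p$.

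For the implication $\w_\varphi^r(f,n^{-1})_{p,w_n}=O(n^{-\a}) \Longrightarrow E_n(f)_{p,w_n}=O(n^{-\a})$, I would simply apply Theorem~\ref{jacksonthm} (with any fixed $\ccc\leq 1$, e.g.\ $\ccc=1$), which gives
\[
E_n(f)_{p,w_n}\leq c\,\w_\varphi^r(f,1/n)_{p,w_n}=O(n^{-\a}).
\]

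For the reverse implication, assume $E_k(f)_{p,w_k}\leq Mk^{-\a}$ for all $k\geq 1$. I would insert this bound into Corollary~\ref{conversethmcor} to obtain
\[
\w_\varphi^r(f,n^{-1})_{p,w_n}\leq \frac{c}{n^{r-1/p}}\sum_{k=1}^{n}k^{r-1-1/p}\cdot Mk^{-\a}
=\frac{cM}{n^{r-1/p}}\sum_{k=1}^{n}k^{(r-1/p-\a)-1}.
\]
Since $0<\a<r-1/p$, the exponent $(r-1/p-\a)-1$ exceeds $-1$, and in fact $\beta:=r-1/p-\a>0$. A standard comparison with the integral $\int_1^n t^{\beta-1}\,dt$ gives
\[
\sum_{k=1}^{n}k^{\beta-1}\leq c\,n^{\beta}=c\,n^{r-1/p-\a},
\]
where $c$ depends only on $\beta$. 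Plugging this back in yields
\[
\w_\varphi^r(f,n^{-1})_{p,w_n}\leq c\,n^{-(r-1/p)}\cdot n^{r-1/p-\a}=c\,n^{-\a},
\]
as required.

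No step here is a real obstacle, since all the heavy lifting—the direct estimate and the weighted inverse estimate valid for every doubling weight when $1\leq p\leq\infty$—has already been done in Theorem~\ref{jacksonthm} and Corollary~\ref{conversethmcor}; the only thing to watch is the strict inequality $\a<r-1/p$, which is exactly what guarantees that the summation exponent is greater than $-1$ so that $\sum_{k=1}^n k^{r-1-1/p-\a}\asymp n^{r-1/p-\a}$. If instead $\a=r-1/p$, one would only get a logarithmic loss, and if $\a>r-1/p$ the argument would fail at the summation step, consistent with the sharpness of the range of admissible $\a$.
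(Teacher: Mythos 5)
Your proposal is correct and follows exactly the route the paper intends: the forward implication is Theorem~\ref{jacksonthm}, and the reverse implication is Corollary~\ref{conversethmcor} (the $\delta=\gamma=1$ case of Theorem~\ref{conversethm}) combined with the standard summation $\sum_{k=1}^n k^{\beta-1}\asymp n^\beta$ for $\beta=r-1/p-\a>0$, which is precisely why the restriction $\a<r-1/p$ appears. The paper simply labels this ``immediate'' and omits the summation details you wrote out.
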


Clearly, this corollary is also valid for $p=1$ and $p=\infty$. However, since $r - 1/\gp = r-1$ in both of these cases it seems more natural to state them in the following form   replacing $r-1$ with $r$.

\begin{corollary}[$p=1$ or $ p = \infty$, and   all doubling weights] \label{cor5.6}
Let $w$ be a doubling weight, $r\in\N$,   $p=1$ or $p=\infty$, and $f\in\Lp[-1,1]$.   Then, for $0<\a< r $, we have
\[
E_n(f, [-1,1])_{p, w_n} = O(n^{-\a}) \iff  \w_\varphi^{r+1}(f, n^{-1})_{p, w_n} = O(n^{-\a}) .
\]
\end{corollary}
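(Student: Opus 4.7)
The plan is to derive Corollary 5.6 as an immediate consequence of the Jackson-type Theorem~\ref{jacksonthm} and the inverse estimate Corollary~\ref{conversethmcor}, applied with $r$ replaced by $r+1$, in the special cases $p=1$ and $p=\infty$ where $\gp = 1$.

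For the direction $(\Leftarrow)$, I would simply invoke Theorem~\ref{jacksonthm} with $r+1$ in place of $r$: for every $n\geq r+1$ there is $P_n\in\Poly_n$ with
\[
E_n(f)_{p,w_n}\le \norm{f-P_n}{p,w_n}\le c\,\w_\varphi^{r+1}(f,1/n)_{p,w_n}=O(n^{-\a}),
\]
which gives the easy implication without any restriction on $\a$.

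For the direction $(\Rightarrow)$, I would apply Corollary~\ref{conversethmcor} with $r$ replaced by $r+1$. Since $\gp=1$ when $p=1$ or $p=\infty$, we have $1/\gp=1$, and the estimate reads
\[
\w_\varphi^{r+1}(f,n^{-1})_{p,w_n}\le \frac{c}{n^{r}}\sum_{k=1}^{n}k^{r-1}\,E_k(f)_{p,w_k}.
\]
Plugging in the hypothesis $E_k(f)_{p,w_k}\le M k^{-\a}$ and using that $\a<r$ ensures $r-1-\a>-1$, the sum is bounded by $cn^{r-\a}$, so the right-hand side is $O(n^{-\a})$, as required.

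There is no real obstacle here: both ingredients are already proved in the paper, and the only condition to be checked is that the sum $\sum_{k=1}^n k^{r-1-\a}$ grows like $n^{r-\a}$, which holds precisely under the hypothesis $0<\a<r$. The point worth emphasizing in the write-up is that the improvement over Theorem~\ref{thmA} from $\w_\varphi^{r+2}$ to $\w_\varphi^{r+1}$ (thereby closing the gap left by Mastroianni--Totik) is obtained purely because Corollary~\ref{conversethmcor} gives the sharper power $n^{-(r-1/\gp)}=n^{-(r-1)}$ in front of the sum of best approximations, matching the order $n^{-\a}$ as soon as $\a<r$ when one uses $\w_\varphi^{r+1}$ instead of $\w_\varphi^r$.
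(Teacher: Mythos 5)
Your proposal is correct and follows exactly the paper's route: the paper obtains \cor{cor5.6} from \thm{jacksonthm} and \cor{conversethmcor} by noting that $r-1/\gp=r-1$ when $p=1$ or $p=\infty$ and then replacing $r$ by $r+1$, which is precisely the substitution you carry out explicitly. Your verification that $\sum_{k=1}^n k^{r-1-\a}\sim n^{r-\a}$ under $0<\a<r$ is the standard (and correct) computation the paper leaves implicit.
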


In the case $p=\infty$, \cor{cor5.6} was proved in \cite{mt2001} (with $\w_\varphi^{r+2}$ instead of $\w_\varphi^{r+1}$). Also,
it was shown in \cite[p. 183]{mt2001} that, in the case $r=1$ and $p=\infty$, \cor{cor5.6} is no longer true if $\w_\varphi^{r+1}$ is replaced by $\w_\varphi^{r}$.

\subsection{Inverse  theorem: the case $0< p <1$}

 \begin{theorem} \label{conversethmplessone}
Let $0<p<1$,  $f\in\Lp[-1,1]$ and let $r\in\N$, and suppose that $w$ is a doubling weight from the class $\Wdg_\Lambda$ with $\gamma \leq rp$.
 Then
\[ %\label{converseinequalityplessone}
\w_\varphi^r (f, \ccc n^{-1})_{p, w_n}  \leq  {c \over n^{r-\delta/p}}   \left( \sum_{k=1}^{n}    k^{rp-\delta-1}     E_{k}(f)_{p,w_{k}}^p \right)^{1/p},
\]
where $\ccc$ is the constant from \cor{lem5.11cor}, and   the constant  $c$   depends  only on $r$, $p$ and the doubling constant of $w$.
\end{theorem}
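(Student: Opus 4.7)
The plan is to mimic the proof of \thm{conversethm} for $1\le p\le\infty$, but replacing Lemma~\ref{lem8.4} (which is false for $0<p<1$) by \cor{lem5.11cor}, and replacing ordinary triangle inequalities by the $p$-quasinorm additivity $\|u+v\|_{p,w}^p\le \|u\|_{p,w}^p+\|v\|_{p,w}^p$. Concretely, pick near-best approximants $P_k^*\in\Poly_k$ with $\|f-P_k^*\|_{p,w_k}\le c E_k(f)_{p,w_k}$, choose $N\in\N$ with $2^N\le n<2^{N+1}$, set $m_j:=2^j$, and split
\[
\w_\varphi^r(f,\ccc/n)_{p,w_n}^p \le \w_\varphi^r(f-P_{m_N}^*,\ccc/n)_{p,w_n}^p + \w_\varphi^r(P_{m_N}^*,\ccc/n)_{p,w_n}^p .
\]
The first term is controlled by \lem{lem5.2}: it is bounded by $c\|f-P_{m_N}^*\|_{p,w_{m_N}}^p\le c E_{m_N}(f)_{p,w_{m_N}}^p$, after observing $w_n\sim w_{m_N}$.

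For the second term, I would telescope $P_{m_N}^*=P_1^*+\sum_{j=0}^{N-1}(P_{m_{j+1}}^*-P_{m_j}^*)$ and apply $p$-subadditivity to reduce to a sum of terms $\w_\varphi^r(P_{m_{j+1}}^*-P_{m_j}^*,\ccc/n)_{p,w_n}^p$. For each of these, \cor{lem5.11cor} applied with the outer weight $w_n$, inner index $m=m_{j+1}\le n$, and $t=1/n\le 1/m_{j+1}$, yields
\[
\w_\varphi^r(P_{m_{j+1}}^*-P_{m_j}^*,\ccc/n)_{p,w_n}\le \Bigl(\tfrac{n}{m_{j+1}}\Bigr)^{\delta/p}\Bigl(\tfrac{m_{j+1}}{n}\Bigr)^{r}\bigl\|P_{m_{j+1}}^*-P_{m_j}^*\bigr\|_{p,w_{m_{j+1}}}.
\]
Since $w_{m_{j+1}}\sim w_{m_j}$ and $E_k$ is non-increasing, the $p$-quasinorm triangle inequality applied to $P_{m_{j+1}}^*-P_{m_j}^*=(f-P_{m_j}^*)-(f-P_{m_{j+1}}^*)$ gives $\|P_{m_{j+1}}^*-P_{m_j}^*\|_{p,w_{m_{j+1}}}^p\le c E_{m_j}(f)_{p,w_{m_j}}^p$.

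Raising to the $p$-th power and summing yields
\[
\w_\varphi^r(P_{m_N}^*,\ccc/n)_{p,w_n}^p \le c\,n^{\delta-rp}\sum_{j=0}^{N-1} m_{j+1}^{rp-\delta}\,E_{m_j}(f)_{p,w_{m_j}}^p,
\]
and the standard dyadic-to-continuous conversion $m_j^{rp-\delta}E_{m_j}^p\le c\sum_{k=m_{j-1}+1}^{m_j}k^{rp-\delta-1}E_k^p$ (using $m_j\le 2k$ on each block and monotonicity of $E_k$) produces the claimed bound $c n^{\delta-rp}\sum_{k=1}^n k^{rp-\delta-1}E_k(f)_{p,w_k}^p$, into which the leftover $E_{m_N}^p$ term is absorbed (its contribution is $\sim$ the $k\sim n$ part of the sum).

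The main obstacle is ensuring \cor{lem5.11cor} is applied with the correct relationship between the outer weight index and the polynomial's degree: the corollary crucially allows $\w_\varphi^r$ measured with weight $w_n$ on a polynomial of lower degree $m_{j+1}$, with the weight-change cost absorbed into the factor $(n/m_{j+1})^{\delta/p}$, and this is precisely what forces the hypothesis $\gamma\le rp$ and $w\in\Wdg_\Lambda$ so that the Markov-Bernstein chain \cor{maincorr} used to prove \lem{lem5.11} controls the constant as a small parameter $\ccc^r$. The remaining arithmetic—$p$-subadditivity, dyadic-to-continuous passage, absorbing the $E_{m_N}^p$ term, and comparing $w_n$ to $w_{m_N}$ when $n\sim m_N$—is routine.
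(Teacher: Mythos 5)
Your proposal is correct and follows essentially the same route as the paper's proof: the same dyadic telescoping of near-best approximants, $p$-subadditivity in place of the triangle inequality, Lemma~\ref{lem5.2} for the $f-P_{m_N}^*$ term, Corollary~\ref{lem5.11cor} in place of the Bernstein-type Lemma~\ref{lem8.4}, and the standard dyadic-to-continuous conversion at the end. The only (immaterial) difference is that you apply Corollary~\ref{lem5.11cor} with outer weight $w_n$ and $t=1/n$, while the paper first replaces $w_n$ by $w_{m_N}$ and uses $t=2^{-N}$.
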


We now recall that any doubling weight $w$ belongs to the class $\Wdg$ with $(\delta, \gamma) \in\Upsilon$. In particular, $w$ belongs to the class $\W^{\delta_0, \gamma_0}$ with $\gamma_0 := \min\{rp, 1\}$ and $\delta_0 := 2-\gamma_0$. Hence, we get a corollary of \thm{conversethmplessone} for  all $r\in\N$, $0<p<1$, and  doubling weights $w$ with $\delta_0  = 2-\min\{rp, 1\}$. However, in the case $rp \leq 1$ this corollary is useless since
the resulting inequality
\[
\w_\varphi^r (f, \ccc n^{-1})_{p, w_n}  \leq   c  n^{2(1/p-r)}   \left( \sum_{k=1}^{n}    k^{2rp-3}     E_{k}(f)_{p,w_{k}}^p \right)^{1/p}
\]
 simply means that $\w_\varphi^r (f, \ccc n^{-1})_{p, w_n}$ is bounded above by a quantity larger than $c  E_{1}(f)_{p,w_{1}}$ which is worse than what \lem{lem5.2} implies.

Therefore, we do not really get anything useful that is valid for all doubling weights if $rp \leq 1$. In the case $rp > 1$, $\delta_0= \gamma_0 = 1$, and we are back to the same situation as in the case for $p\geq 1$, \ie
we can use the fact that any doubling weight is in $\W^{1,1}_1$. Hence, we get the following inverse theorem that is valid for all doubling weights.

 \begin{corollary} \label{conversethmplessonecor}
Let $w$ be a doubling weight,
$0<p<1$,  $f\in\Lp[-1,1]$, and let $r\in\N$ be such that $r > 1/p$.
 Then
\[ %\label{converseinequalityplessonecor}
\w_\varphi^r (f, \ccc n^{-1})_{p, w_n}  \leq  {c \over n^{r-1/p}}   \left( \sum_{k=1}^{n}    k^{rp-2}     E_{k}(f)_{p,w_{k}}^p \right)^{1/p},
\]
where $\ccc$ is the constant from \cor{lem5.11cor}, and   the constant  $c$   depends  only on $r$, $p$ and the doubling constant of $w$.
\end{corollary}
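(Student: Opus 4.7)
The statement is a direct specialization of \thm{conversethmplessone} to the case when $w$ is an arbitrary doubling weight, so my plan is to simply identify the correct admissible pair $(\delta,\gamma)\in\Upsilon$ and read off the bound from that theorem. By Remark~\ref{remcr}, every doubling weight lies in $\W^{1,1}_\Lambda$ with a constant $\Lambda$ that depends only on the doubling constant of $w$; that is, I will use $(\delta,\gamma)=(1,1)$, which satisfies $\delta\geq 1$, $\gamma\geq 0$, $\delta+\gamma\geq 2$, so $(\delta,\gamma)\in\Upsilon$.

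Next I need to verify the hypothesis $\gamma\leq rp$ required by \thm{conversethmplessone}. With $\gamma=1$, this becomes $rp\geq 1$, which is guaranteed by the assumption $r>1/p$. With these choices I can invoke \thm{conversethmplessone} to obtain
\[
\w_\varphi^r (f, \ccc n^{-1})_{p, w_n}
\leq \frac{c}{n^{r-\delta/p}}
\left( \sum_{k=1}^{n} k^{rp-\delta-1} E_{k}(f)_{p,w_{k}}^p \right)^{1/p}
= \frac{c}{n^{r-1/p}}
\left( \sum_{k=1}^{n} k^{rp-2} E_{k}(f)_{p,w_{k}}^p \right)^{1/p},
\]
which is the desired inequality. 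The constant $c$ produced by \thm{conversethmplessone} depends on $r$, $p$, $\delta$, $\gamma$ and $\Lambda$; since $\delta$ and $\gamma$ are now fixed absolute numbers and $\Lambda$ depends only on the doubling constant of $w$, the final $c$ depends only on $r$, $p$ and the doubling constant of $w$, as claimed. The constant $\ccc$ comes unchanged from \cor{lem5.11cor} applied inside the proof of \thm{conversethmplessone}.

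The only point that requires care is the justification that the pair $(1,1)$ is the right one to pick. As noted in the discussion preceding the corollary, the alternative choice $\gamma_0=\min\{rp,1\}$, $\delta_0=2-\gamma_0$ (which is forced if we allow $rp\leq 1$) leads to a vacuous estimate in the regime $rp\leq 1$, because the resulting exponent on $n$ is too large and the bound is weaker than the trivial one supplied by \lem{lem5.2}. The condition $r>1/p$ is precisely what lets us drop into the favorable regime where $\gamma=\delta=1$ is admissible, and this is the content of the whole argument; there is no additional estimation to carry out beyond invoking \thm{conversethmplessone}.
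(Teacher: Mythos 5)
Your proposal is correct and follows exactly the paper's own derivation: the paper likewise observes that every doubling weight lies in $\W^{1,1}_1$ (by Remark~\ref{remcr}, with $\Lambda=1$), that the hypothesis $\gamma\leq rp$ of \thm{conversethmplessone} reduces to $rp\geq 1$, which is guaranteed by $r>1/p$, and then reads off the stated bound with $\delta=1$. Your side remark about why the alternative pair $(\delta_0,\gamma_0)=(2-\min\{rp,1\},\min\{rp,1\})$ is useless when $rp\leq 1$ matches the paper's discussion preceding the corollary.
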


\begin{remark}
Since any weight  that satisfies the $A^*$ property is in $\W^{0,0}$ (see Remark~\ref{remast}), it immediately follows from \thm{conversethmplessone} that, for $A^*$ weights $w$, we have
 \[
\w_\varphi^r (f, \ccc n^{-1})_{p, w_n}  \leq  {c \over n^{r}}   \left( \sum_{k=1}^{n}    k^{rp-1}     E_{k}(f)_{p,w_{k}}^p \right)^{1/p}, \quad 0< p <1 .
\]
 In fact, it is possible to show that one can set $\ccc = 1$ in this case.
\end{remark}

 \begin{proof}[Proof of \thm{conversethmplessone}] The method of the proof is rather standard (see \eg \cite{djl}). The beginning is the same as in the case $1\leq p \leq \infty$.
Namely,
let $P_n^* \in \Poly_n$ denote a polynomial of (near) best approximation to $f$ with weight $w_n$, \ie
\[
  \norm{f-P_n^*}{p,w_n} \leq  c E_{n}(f)_{p,w_n} .
\]
We let $N\in\N$ be such that $2^N \leq n < 2^{N+1}$, denote $m_j := 2^j$, and recall that $\ccc$ is the constant from \cor{lem5.11cor}.

Recalling that $w_n(x)\sim w_m(x)$ if $n \sim m$, and using \lem{lem5.2} we have
\begin{eqnarray*}
\w_\varphi^r (f, \ccc n^{-1})_{p, w_n}^p & \leq & \w_\varphi^r (f, \ccc 2^{-N})_{p, w_n}^p \\
& \leq & \w_\varphi^r (f - P_{m_N}^*, \ccc 2^{-N})_{p, w_n}^p + \w_\varphi^r (P_{m_N}^*, \ccc 2^{-N})_{p, w_n}^p \\
& \leq & c \norm{f - P_{m_N}^*}{p, w_{m_N}}^p + \w_\varphi^r (P_{m_N}^*, \ccc 2^{-N})_{p, w_{m_N}}^p \\
& \leq &
 c E_{m_N}(f)_{p,w_{m_N}}^p + \w_\varphi^r (P_{m_N}^*, \ccc 2^{-N})_{p, w_{m_N}}^p .
\end{eqnarray*}
Using
\[
P_{m_N}^* =  P_1^* + \sum_{j=0}^{N-1} (P_{m_{j+1}}^* - P_{m_{j}}^*)
\]
and \cor{lem5.11cor} with $t := 2^{-N}$ (noting that $t \leq 1/m_{j+1}$ for all $0\leq j \leq N-1$) we have
\begin{eqnarray*}
\w_\varphi^r (P_{m_N}^*, \ccc 2^{-N})_{p, w_{m_N}}^p   &\leq&
 \sum_{j=0}^{N-1} \w_\varphi^r  \left( P_{m_{j+1}}^* - P_{m_{j}}^*, \ccc 2^{-N}\right)_{p, w_{m_N}}^p \\
 & \leq &
 \sum_{j=0}^{N-1} \left( m_N \over m_{j+1} \right)^\delta (2^{-N} m_{j+1})^{rp} \norm{P_{m_{j+1}}^* - P_{m_{j}}^*}{p, w_{m_{j+1}}}^p \\
  & \leq &
 c \sum_{j=0}^{N-1} 2^{-(N-j)(rp-\delta)}  E_{m_j}(f)_{p, w_{m_j}}^p .
\end{eqnarray*}
Hence,
\[
\w_\varphi^r(f, \ccc n^{-1})_{p, w_n}^p \leq c   \sum_{j=0}^{N} 2^{-(N-j)(rp-\delta)}  E_{m_j}(f)_{p, w_{m_j}}^p ,
\]
and so as in the proof for $1\leq p\leq \infty$, we conclude that
\begin{eqnarray*}
\w_\varphi^r(f, \ccc n^{-1})_{p, w_n}^p & \leq & {c \over n^{rp-\delta}} \sum_{j=0}^{N} 2^{j(rp-\delta)}  E_{m_j}(f)_{p, w_{m_j}}^p \\
& \leq &
 {c \over n^{rp-\delta}} \left(  E_1(f)_{p, w_1}^p +     \sum_{j=1}^{N}   \sum_{k=m_{j-1}+1}^{m_j}  k^{rp-\delta-1} E_{k}(f)_{p, w_{k}}^p    \right)\\
&\leq &  {c \over n^{rp-\delta}}    \sum_{k=1}^{n} k^{rp-\delta-1}  E_{k}(f)_{p, w_{k}}^p .
\end{eqnarray*}
\end{proof}

We have the following immediate corollary of Theorems~\ref{jacksonthm} and \ref{conversethmplessone}.

\begin{corollary}[$0< p < 1$] \label{corppppp}
Let $r\in\N$,   $0 <p <1$ and $f\in\Lp[-1,1]$. Suppose that
 $w$ is a doubling weight from the class $\Wdg$ with $\gamma \leq rp$.    Then, for $0<\a< r-\delta/p$, we have
\[
E_n(f, [-1,1])_{p, w_n} = O(n^{-\a}) \iff  \w_\varphi^r(f, n^{-1})_{p, w_n} = O(n^{-\a}) .
\]
\end{corollary}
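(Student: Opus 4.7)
The plan is to deduce \cor{corppppp} immediately from the Jackson-type estimate in \thm{jacksonthm} and the inverse estimate in \thm{conversethmplessone}, together with a routine power-sum bound and a trivial adjustment of the dilation constant $\ccc$.

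First I would handle the forward implication. Assuming $\w_\varphi^r(f, n^{-1})_{p, w_n} = O(n^{-\a})$, \thm{jacksonthm} applied with $\ccc = 1$ gives
\[
E_n(f)_{p, w_n} \leq c\, \w_\varphi^r(f, 1/n)_{p, w_n} = O(n^{-\a}),
\]
so this direction is immediate and needs nothing beyond \thm{jacksonthm}.

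For the converse I would start from \thm{conversethmplessone}. If $E_k(f)_{p, w_k} = O(k^{-\a})$, then
\[
\w_\varphi^r(f, \ccc n^{-1})_{p, w_n}^p \leq \frac{c}{n^{rp - \delta}} \sum_{k=1}^n k^{rp - \delta - 1 - \a p}.
\]
The hypothesis $\a < r - \delta/p$ is equivalent to $rp - \delta - 1 - \a p > -1$, which is precisely what is needed for $\sum_{k=1}^n k^{rp-\delta-1-\a p} \leq c\, n^{rp - \delta - \a p}$. Plugging this back in yields $\w_\varphi^r(f, \ccc n^{-1})_{p, w_n} = O(n^{-\a})$.

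The only small nuisance is that the inverse theorem controls $\w_\varphi^r(f, \ccc n^{-1})$ rather than $\w_\varphi^r(f, 1/n)$. I would remove the $\ccc$ by picking, for $n$ large, $m := \lfloor \ccc n \rfloor$; then $m \sim n$ (so $w_n(x) \sim w_m(x)$ and $\dn(x) \sim \dm(x)$ by the observations in Section~\ref{properties}) and $1/n \leq \ccc/m$. Monotonicity of the modulus in its second argument gives
\[
\w_\varphi^r(f, n^{-1})_{p, w_n} \leq c\, \w_\varphi^r(f, \ccc m^{-1})_{p, w_m} = O(m^{-\a}) = O(n^{-\a}),
\]
completing the proof. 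No step presents any real obstacle; the strict inequality $\a < r - \delta/p$ is exactly what is required for the geometric-type bound on the defining sum, and all other ingredients (monotonicity of $\w_\varphi^r$ in $t$, equivalence $w_n \sim w_m$ for $n \sim m$) have already been established.
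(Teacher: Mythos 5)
Your proposal is correct and follows exactly the route the paper intends: the corollary is stated there as an immediate consequence of Theorem~\ref{jacksonthm} and Theorem~\ref{conversethmplessone}, with the Jackson estimate giving one implication and the inverse estimate plus the elementary bound $\sum_{k=1}^n k^{rp-\delta-1-\a p}\leq c\,n^{rp-\delta-\a p}$ (valid precisely because $\a<r-\delta/p$) giving the other. Your handling of the dilation constant $\ccc$ via $m=\lfloor \ccc n\rfloor$, monotonicity of $\w_\varphi^r$ in $t$, and the equivalence $w_n\sim w_m$ for $n\sim m$ is the right way to pass from $\w_\varphi^r(f,\ccc n^{-1})_{p,w_n}$ to $\w_\varphi^r(f,n^{-1})_{p,w_n}$, consistent with the paper's own remark that the inequality $\w_\varphi^r(f,\lambda/n)_{p,w_n}\leq c\,\w_\varphi^r(f,1/n)_{p,w_n}$ is not available for general $\lambda$.
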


We remark that, since $1 \in \W^{0,0}$,   an immediate consequence of \cor{corppppp} is the usual equivalence result for unweighted polynomial approximation in $\Lp$ for $0<p<1$.

Again, taking into account that any doubling weight belongs to the class $\W^{1, 1}$  and assuming that $rp>1$ we get the following corollary.

\begin{corollary}[$0< p < 1$ and all doubling weights]
Let $w$ be a doubling weight, $0< p< 1$, $f\in\Lp[-1,1]$, and let $r\in\N$ be such that $r > 1/p$.   Then, for $0<\a< r-1/p$, we have
\[
E_n(f, [-1,1])_{p, w_n} = O(n^{-\a}) \iff  \w_\varphi^r(f, n^{-1})_{p, w_n} = O(n^{-\a}) .
\]
\end{corollary}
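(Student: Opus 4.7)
The plan is to deduce this equivalence directly from two results already at our disposal: the Jackson-type estimate in \thm{jacksonthm} and the inverse estimate for all doubling weights in \cor{conversethmplessonecor}. Neither direction requires new machinery; the argument amounts to substituting the hypothesis into the appropriate inequality and summing a geometric-like series.

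For the implication $\w_\varphi^r(f, n^{-1})_{p, w_n} = O(n^{-\a}) \Rightarrow E_n(f)_{p, w_n} = O(n^{-\a})$, I would apply \thm{jacksonthm} with $\ccc = 1$, which yields $E_n(f)_{p, w_n} \leq c \w_\varphi^r(f, 1/n)_{p, w_n}$ for every $n \geq r$. The conclusion is then immediate.

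For the converse, I would feed the hypothesis $E_k(f)_{p, w_k} \leq C k^{-\a}$ into \cor{conversethmplessonecor}. Since $r > 1/p$ we may invoke that corollary (this is the condition that lets us use $w \in \W^{1,1}$ and the particular form of the inverse sum). The resulting estimate is
\[
\w_\varphi^r(f, \ccc/n)_{p, w_n} \leq c\, n^{-(r-1/p)} \left( \sum_{k=1}^n k^{rp - 2 - \a p} \right)^{1/p}.
\]
Because $\a < r - 1/p$ we have $rp - 2 - \a p > -1$, so $\sum_{k=1}^n k^{rp-2-\a p} \leq c\, n^{rp - 1 - \a p}$, and a direct computation of exponents gives $\w_\varphi^r(f, \ccc/n)_{p, w_n} \leq c\, n^{-\a}$.

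The only mildly technical point is passing from the argument $\ccc/n$ to $1/n$ in the modulus, since $\ccc$ may be strictly less than $1$. To handle this, I would set $m := \lfloor \ccc n \rfloor$ for $n$ large enough that $m \geq 1$; then $\ccc/m \geq 1/n$, and monotonicity of $\w_\varphi^r$ in $t$ together with the previous bound gives $\w_\varphi^r(f, 1/n)_{p, w_m} \leq \w_\varphi^r(f, \ccc/m)_{p, w_m} \leq c\, m^{-\a} \leq c'\, n^{-\a}$. Since $m \sim n$ we have $w_m(x) \sim w_n(x)$ uniformly in $x$, so $\w_\varphi^r(f, 1/n)_{p, w_n} \leq c'' \w_\varphi^r(f, 1/n)_{p, w_m} = O(n^{-\a})$. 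The remaining bounded range of $n$ is trivial. I do not foresee any serious obstacle, as this rescaling step is standard and has been used repeatedly in the paper.
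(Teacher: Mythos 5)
Your argument is correct and follows exactly the route the paper intends: the paper presents this as an immediate consequence of Theorem~\ref{jacksonthm} and Corollary~\ref{conversethmplessonecor}, and you have simply filled in the (routine) summation of $\sum_{k\le n} k^{rp-2-\a p}$ and the standard $\ccc/n \to 1/n$ rescaling via $w_m \sim w_n$ for $m\sim n$. No gaps.
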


As a final remark in this section, we mention that it is still an open problem to prove or disprove if Theorems~\ref{conversethm} and \ref{conversethmplessone} are sharp.

\sect{$\K$-functionals and Realization} \label{kfn}

For $f\in\Lp$, $r\in\N$ and a weight $w$, the weighted $K$-functional is defined as follows
\[
\K_{r,\varphi}(f, t)_{p, w} := \inf_{g^{(r-1)} \in \AC_\loc} \left( \norm{f-g}{p, w} + t^r \norm{  \varphi^r g^{(r)}}{p, w} \right) ,
\]
where $\AC_\loc$ is the set of all locally absolutely continuous functions on $(-1,1)$.
In fact, for doubling weights $w$ we are interested in a sequence of these $K$-functionals with weights $w_n$,
and so we define several related quantities (all of which depend on $n$) as follows:
\[
\K_{r,\varphi_n}(f, t)_{p, w_n} := \inf_{g^{(r-1)} \in \AC_\loc} \left( \norm{f-g}{p, w_n} + t^r \norm{  \varphi_n^r g^{(r)}}{p, w_n} \right) ,
\]
where $\varphi_n(x) := \varphi(x) +1/n = n\dn(x)$,
\[
\Rea_{r,\varphi}(f, t)_{p, w_n} := \inf_{P_n \in \Poly_n} \left( \norm{f-P_n}{p, w_n} + t^r  \norm{  \varphi^r P_n^{(r)}}{p, w_n} \right) ,
\]
and
\[
\Rea_{r,\varphi_n}(f, t)_{p, w_n} := \inf_{P_n \in \Poly_n} \left( \norm{f-P_n}{p, w_n} + t^r  \norm{  \varphi_n^r P_n^{(r)}}{p, w_n} \right) .
\]
Note that $\Rea_{r,\varphi}$ and $\Rea_{r,\varphi_n}$ are   sometimes referred to as ``realizations'' of   appropriate $\K$-functionals or ``realization functionals'' (see \cites{dhi, d20}, for example).

 It is clear that
 \begin{eqnarray} \label{someeq}
 \K_{r,\varphi}(f, t)_{p, w_n} &\leq& \K_{r,\varphi_n}(f, t)_{p, w_n} \leq   \Rea_{r,\varphi_n}(f, t)_{p, w_n}     \andd \\ \nonumber
  \K_{r,\varphi}(f, t)_{p, w_n} &\leq& \Rea_{r,\varphi}(f, t)_{p, w_n} \leq \Rea_{r,\varphi_n}(f, t)_{p, w_n} , \quad t> 0.
 \end{eqnarray}

It follows from \thm{jacksonthm} that, if $w$ is a doubling weight, $r\in\N$, $0<p\leq \infty$, $f\in\Lp[-1,1]$, and $\A>0$ is any constant, then there exists $P_n\in\Poly_n$ such that
\[
\norm{f-P_n}{p, w_n} + n^{-r}  \norm{  \varphi_n^r P_n^{(r)}}{p, w_n}
  \leq c \tilde \w_\varphi^r(f, \A/n)_{p, w_n}    \leq c \w_\varphi^r(f, \A/n)_{p, w_n}            , \quad n\geq r,
\]
and hence, for any constant  $\B >0$,
\be \label{lowere}
\Rea_{r,\varphi_n}(f, t)_{p, w_n}
  \leq c \tilde \w_\varphi^r(f, \A/n)_{p, w_n}    \leq c \w_\varphi^r(f, \A/n)_{p, w_n}  , \quad  n\geq r \andd t \leq \B/n   ,
\ee
where the constant $c$ depends only on $r$, $p$, $\A$, $\B$,  and the doubling constant of $w$.

 Lemmas~\ref{lem5.2} and \ref{lem8.4} imply that,  if $w$ is a doubling weight, $1 \leq p\leq \infty$,  $f\in\Lp[-1,1]$, $\CC>0$, $\D>0$, and
$g$ is any function such that $ g^{(r-1)} \in \AC_\loc$ and $\norm{\varphi^r g^{(r)}}{p} < \infty$,   then
\[
\w_\varphi^r(f, t)_{p, w_n}   \leq   c \K_{r,\varphi}(f, \CC t)_{p, w_n} , \quad 0<t \leq \D/n ,
\]
where the constant $c$ depends only on $r$, $p$, $\CC$, $\D$,  and the doubling constant of $w$.

Therefore, together with \ineq{someeq}, this immediately implies the following result.

\begin{corollary}
If $w$ is a doubling weight, $1 \leq p\leq \infty$,  $f\in\Lp[-1,1]$, and $n,r\in\N$ are such that $n\geq r$,
and $\A/n \leq t \leq \B/n$,
 then
\begin{eqnarray*}
\w_\varphi^r(f, t)_{p, w_n}  &\sim&    \tilde \w_\varphi^r(f, t)_{p, w_n} \sim
\K_{r,\varphi}(f, t)_{p, w_n} \sim  \K_{r,\varphi_n}(f, t)_{p, w_n} \\
&\sim & \Rea_{r,\varphi}(f, t)_{p, w_n}  \sim  \Rea_{r,\varphi_n}(f, t)_{p, w_n} ,
\end{eqnarray*}
where all equivalence constants depend only on $r$, $p$, $\A$, $\B$,  and the doubling constant of $w$.
\end{corollary}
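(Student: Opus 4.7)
The plan is to close a cycle of inequalities among the six quantities by chaining together the four ingredients that the paragraphs preceding the statement have already assembled: (i)~the trivial bound $\tilde\w_\varphi^r(f,t)_{p,w_n}\leq\w_\varphi^r(f,t)_{p,w_n}$, which is an equality when $p=\infty$; (ii)~the inclusion chain \ineq{someeq}; (iii)~the direct estimate \ineq{lowere}, which gives $\Rea_{r,\varphi_n}(f,t)_{p,w_n}\leq c\,\tilde\w_\varphi^r(f,\A'/n)_{p,w_n}$ for arbitrary $\A',\B'>0$ and $t\leq\B'/n$; and (iv)~the converse bound $\w_\varphi^r(f,t)_{p,w_n}\leq c\,\K_{r,\varphi}(f,\CC t)_{p,w_n}$ for $0<t\leq\D/n$ derived from \lem{lem5.2} and \lem{lem8.4}. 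Because $1\leq p\leq\infty$ the weighted norm obeys a genuine triangle inequality, so splitting $f=(f-g)+g$ and applying the two lemmas separately to the two pieces shows that in (iv) one may take $\CC=1$ (with $\D=c_*$), which is the cleanest choice here.

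Fix $t$ with $\A/n\leq t\leq\B/n$. The core of the proof is to write down the closed cycle
\begin{align*}
\w_\varphi^r(f,t)_{p,w_n}
&\leq c\K_{r,\varphi}(f,t)_{p,w_n}
\leq c\K_{r,\varphi_n}(f,t)_{p,w_n}
\leq c\Rea_{r,\varphi_n}(f,t)_{p,w_n}\\
&\leq c\tilde\w_\varphi^r(f,\A/n)_{p,w_n}
\leq c\tilde\w_\varphi^r(f,t)_{p,w_n}
\leq c\w_\varphi^r(f,t)_{p,w_n},
\end{align*}
where the first step is (iv) with $\CC=1$, the next two are (ii), the fourth is (iii) applied with $\A'=\A$ and $\B'=\B$, the fifth uses the monotonicity of $\tilde\w_\varphi^r$ in $t$ together with the hypothesis $\A/n\leq t$, and the last is (i). All constants depend only on $r$, $p$, $\A$, $\B$ and the doubling constant of $w$. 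Since each link is a one-sided bound with such a constant, every intermediate quantity is sandwiched between two copies of $\w_\varphi^r(f,t)_{p,w_n}$ and is therefore equivalent to it with the same class of constants.

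This already settles the equivalence of five of the six quantities --- $\w_\varphi^r$, $\tilde\w_\varphi^r$, $\K_{r,\varphi}$, $\K_{r,\varphi_n}$ and $\Rea_{r,\varphi_n}$. To handle the remaining one, I would invoke \ineq{someeq} once more: the sandwich
\[
\K_{r,\varphi}(f,t)_{p,w_n}\leq\Rea_{r,\varphi}(f,t)_{p,w_n}\leq\Rea_{r,\varphi_n}(f,t)_{p,w_n}
\]
traps $\Rea_{r,\varphi}(f,t)_{p,w_n}$ between two quantities already proved equivalent to $\w_\varphi^r(f,t)_{p,w_n}$, so it is equivalent as well.

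There is no genuine difficulty in this argument; once ingredients (i)--(iv) are in hand, the rest is bookkeeping. The one point that deserves a line of care is the legitimacy of taking $\CC=1$ in (iv): for any admissible $g$, \lem{lem5.2} bounds $\w_\varphi^r(f-g,t)_{p,w_n}$ by $c\,\norm{f-g}{p,w_n}$ and \lem{lem8.4} bounds $\w_\varphi^r(g,t)_{p,w_n}$ by $c\,t^r\norm{\varphi^r g^{(r)}}{p,w_n}$, both at the same value of $t$ (for $t\leq c_*/n$) and with no extra scaling, so summing and taking the infimum over $g$ lands exactly on $\K_{r,\varphi}(f,t)_{p,w_n}$; this is what allows the cycle to close without any shift in scale.
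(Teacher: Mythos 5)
Your argument is correct and is essentially the paper's own: the corollary is stated there as an immediate consequence of \ineq{someeq}, \ineq{lowere}, and the bound $\w_\varphi^r(f,t)_{p,w_n}\leq c\,\K_{r,\varphi}(f,\CC t)_{p,w_n}$ obtained from Lemmas~\ref{lem5.2} and \ref{lem8.4}, which is exactly the cycle you close. One small caveat: the averaged modulus $\tilde\w_\varphi^r$ is an average, not a supremum, so it is not literally monotone in $t$ when $p<\infty$; the step $\tilde\w_\varphi^r(f,\A/n)_{p,w_n}\leq c\,\tilde\w_\varphi^r(f,t)_{p,w_n}$ should instead be justified by the one-line estimate $\tilde\w_\varphi^r(f,s)_{p,w_n}^p=s^{-1}\int_0^s\|\Delta^r_{h\varphi}f\|_{p,w_n}^p\,dh\leq (t/s)\,\tilde\w_\varphi^r(f,t)_{p,w_n}^p$ for $s\leq t$, which with $s=\A/n$ and $t\leq\B/n$ gives the constant $(\B/\A)^{1/p}$.
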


We now turn our attention to the case $0<p<1$. Things are a bit more complicated now since, as was shown in \cite{dhi}, the $\K$-functionals are identically zero if $0<p<1$.
However, we are still able to get the equivalence of the moduli and the realization functionals.

Lemmas~\ref{lem5.11new} and \ref{lem5.2} imply that, for a doubling weight $w$, $0<p<1$, $n\in\N$, and some constant $\ccc$ depending only on $r$, $p$, and the doubling constant of $w$,
\[
\w_\varphi^r (f, t)_{p, w_n} \leq c \Rea_{r,\varphi}(f, t)_{p, w_n}, \quad  0<t \leq \ccc/n ,
\]
where $c$ depends only on $r$, $p$ and the doubling constant of $w$.
For $n\geq r$, together with  \ineq{lowere}, this implies, for $\A/n \leq t \leq \ccc/n$,
 \begin{eqnarray} \label{trrr}
\Rea_{r,\varphi_n}(f, t)_{p, w_n} &\leq& c \tilde \w_\varphi^r (f, \A/n)_{p, w_n} \leq c \tilde \w_\varphi^r (f, t )_{p, w_n}
\leq c \w_\varphi^r (f, t)_{p, w_n} \\ \nonumber
&\leq&   c\Rea_{r,\varphi}(f, t)_{p, w_n} \leq
c \Rea_{r,\varphi_n}(f, t)_{p, w_n} .
\end{eqnarray}

Suppose now that $P_n^* \in \Poly_n$ is  a polynomial of (near) best approximation to $f$ with weight $w_n$, \ie
$ \norm{f-P_n^*}{p,w_n} \leq  c  E_{n}(f)_{p,w_n}$, and consider
\[
\Rea_{r,\varphi}^*(f, t)_{p, w_n} :=     \norm{f-P_n^*}{p, w_n} + t^r   \norm{  \varphi^r (P_n^*)^{(r)}}{p, w_n} .
\]
Then, Lemmas~\ref{lem5.11new} and \ref{lem5.2} and \thm{jacksonthm} imply, for $\A/n \leq t \leq \ccc/n$,
\begin{eqnarray*}
\Rea_{r,\varphi}^*(f, t)_{p, w_n} &\leq& c E_{n}(f)_{p,w_n} + t^r  \norm{  \varphi^r (P_n^*)^{(r)}}{p, w_n}
  \leq      c E_{n}(f)_{p,w_n} + c (\A/n)^r  \norm{  \varphi^r (P_n^*)^{(r)}}{p, w_n} \\
& \leq &  c E_{n}(f)_{p,w_n} + c \w_\varphi^r (P_n^*, \A/n)_{p, w_n}
 \leq  c E_{n}(f)_{p,w_n} + c \w_\varphi^r (f, \A/n)_{p, w_n} \\
 & \leq & c \w_\varphi^r (f, \A/n)_{p, w_n} \leq c \w_\varphi^r (f, t)_{p, w_n} .
\end{eqnarray*}
Since
\[
\Rea_{r,\varphi}(f, t)_{p, w_n}   \leq  \Rea_{r,\varphi}^*(f, t)_{p, w_n},
\]
together with \ineq{trrr} we get the following result.

\begin{corollary}
If $w$ is a doubling weight, $0< p <1$,  $f\in\Lp[-1,1]$, and $n,r\in\N$ are such that $n\geq r$, then there exists a positive constant $\ccc$ depending only on  $r$,  $p$ and  the doubling constant of $w$, such that,
for any constant $0<\A<\ccc$ and $\A/n \leq t \leq \ccc/n$, we have
\[
\w_\varphi^r (f, t)_{p, w_n} \sim  \tilde \w_\varphi^r (f, t)_{p, w_n} \sim     \Rea_{r,\varphi}(f, t)_{p, w_n}  \sim     \Rea_{r,\varphi_n}(f, t)_{p, w_n} \sim \Rea_{r,\varphi}^*(f, t)_{p, w_n} ,
\]
where all equivalence constants depend only on $r$, $p$, $\A$, $\ccc$,  and the doubling constant of the weight $w$.
\end{corollary}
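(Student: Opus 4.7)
The plan is to observe that nearly everything needed already appears in the paragraphs preceding the corollary, so the proof reduces to assembling two already-established inequality chains and adding one trivial step. First, the chain \ineq{trrr} (built from \lem{lem5.11new}, \lem{lem5.2}, and \ineq{lowere}) provides
\[
\Rea_{r,\varphi_n}(f,t)_{p,w_n} \leq c\tilde\w_\varphi^r(f,t)_{p,w_n} \leq c\w_\varphi^r(f,t)_{p,w_n} \leq c\Rea_{r,\varphi}(f,t)_{p,w_n} \leq c\Rea_{r,\varphi_n}(f,t)_{p,w_n}
\]
for $\A/n \leq t \leq \ccc/n$, which already gives the equivalence of the four quantities $\w_\varphi^r$, $\tilde\w_\varphi^r$, $\Rea_{r,\varphi}$, and $\Rea_{r,\varphi_n}$.

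Second, to bring the ``starred'' realization $\Rea_{r,\varphi}^*$ into this equivalence class, I would use the following two-sided sandwich. For the upper bound, take the near best approximant $P_n^*$ satisfying $\norm{f-P_n^*}{p,w_n} \leq c E_n(f)_{p,w_n}$, apply \lem{lem5.11new} to write $t^r \norm{\varphi^r (P_n^*)^{(r)}}{p,w_n} \leq c\w_\varphi^r(P_n^*, t)_{p,w_n}$ for $t$ in the admissible range, and then use the triangle/quasi-triangle inequality together with \lem{lem5.2} to compare $\w_\varphi^r(P_n^*, t)$ with $\w_\varphi^r(f,t)$ up to the approximation error, which itself is controlled by $\w_\varphi^r(f,\A/n)_{p,w_n}$ via \thm{jacksonthm}. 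This yields $\Rea_{r,\varphi}^*(f,t)_{p,w_n} \leq c\w_\varphi^r(f,t)_{p,w_n}$. The lower bound is immediate: since $P_n^*$ is an admissible competitor in the infimum defining $\Rea_{r,\varphi}$, we have trivially $\Rea_{r,\varphi}(f,t)_{p,w_n} \leq \Rea_{r,\varphi}^*(f,t)_{p,w_n}$.

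Combining these two bounds with the chain \ineq{trrr} closes the loop and produces the five-fold equivalence. The constant $\ccc$ is the one supplied by \lem{lem5.11new}, and taking $\A < \ccc$ ensures that the Jackson estimate \thm{jacksonthm}, which is invoked at scale $\A/n$, outputs a polynomial that can be fed into the realization bound at the larger scale $t \geq \A/n$ without mismatch.

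The genuine technical weight lies in \lem{lem5.11new} (the Markov-Bernstein style two-sided comparison between $\|\Delta_{h\varphi}^r P_n\|_{p,w_n}$ and $h^r\|\varphi^r P_n^{(r)}\|_{p,w_n}$, which requires \cor{maincorrphi} and the Taylor argument). Once that is in hand, the corollary is a short bookkeeping exercise: the only delicate point is to keep the scale parameter $t$ uniformly inside $[\A/n, \ccc/n]$ when passing between $\w_\varphi^r(f,t)$, $\w_\varphi^r(f,\A/n)$, and the realizations, so that the equivalence constants end up depending only on $r$, $p$, $\A$, $\ccc$, and the doubling constant of $w$.
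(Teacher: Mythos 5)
Your proposal is correct and follows essentially the same route as the paper: the equivalence of $\w_\varphi^r$, $\tilde\w_\varphi^r$, $\Rea_{r,\varphi}$ and $\Rea_{r,\varphi_n}$ is read off from the chain \ineq{trrr}, and $\Rea_{r,\varphi}^*$ is attached via the trivial bound $\Rea_{r,\varphi}\leq \Rea_{r,\varphi}^*$ together with the upper estimate obtained from \lem{lem5.11new}, the quasi-triangle inequality with \lem{lem5.2}, and the Jackson estimate of \thm{jacksonthm}. The only cosmetic difference is that the paper first replaces $t^r$ by $c(\A/n)^r$ before invoking \lem{lem5.11new}, whereas you apply it directly at scale $t\leq\ccc/n$; both are valid.
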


\begin{bibsection}
\begin{biblist}

\bib{be}{book}{
   author={Borwein, P.},
   author={Erd{\'e}lyi, T.},
   title={Polynomials and polynomial inequalities},
   series={Graduate Texts in Mathematics},
   volume={161},
   publisher={Springer-Verlag, New York},
   date={1995},
   pages={x+480},
%   isbn={0-387-94509-1},
%   review={\MR{1367960 (97e:41001)}},
%   doi={10.1007/978-1-4612-0793-1},
}

%\bib{cm}{article}{
%   author={Criscuolo, G.},
%   author={Mastroianni, G.},
%   title={Fourier and Lagrange operators in some weighted Sobolev-type
%   spaces},
%   journal={Acta Sci. Math. (Szeged)},
%   volume={60},
%   date={1995},
%   number={1-2},
%   pages={131--148},
%%   issn={0001-6969},
%%   review={\MR{1348685 (96e:41006)}},
%}

%\bib{dbmv}{article}{
%   author={De Bonis, M. C.},
%   author={Mastroianni, G.},
%   author={Viggiano, M.},
%   title={$K$-functionals, moduli of smoothness and weighted best
%   approximation of the semiaxis},
%   conference={
%      title={Functions, series, operators},
%      address={Budapest},
%      date={1999},
%   },
%   book={
%      publisher={J\'anos Bolyai Math. Soc., Budapest},
%   },
%   date={2002},
%   pages={181--211},
%%   review={\MR{1981564 (2004d:41009)}},
%}

%\bib{dbmr}{article}{
%   author={De Bonis, M. C.},
%   author={Mastroianni, G.},
%   author={Russo, M. G.},
%   title={Polynomial approximation with special doubling weights},
%   journal={Acta Sci. Math. (Szeged)},
%   volume={69},
%   date={2003},
%   number={1-2},
%   pages={159--184},
%   issn={0001-6969},
%%   review={\MR{1991663 (2004f:41002)}},
%}

\bib{dly}{article}{
   author={DeVore, R. A.},
   author={Leviatan, D.},
   author={Yu, X. M.},
   title={Polynomial approximation in $L_p$ $(0<p<1)$},
   journal={Constr. Approx.},
   volume={8},
   date={1992},
   number={2},
   pages={187--201},
%   issn={0176-4276},
%   review={\MR{1152876 (93f:41011)}},
%   doi={10.1007/BF01238268},
}

\bib{dl}{book}{
    author={DeVore, R. A.},
    author={Lorentz, G. G.},
     title={Constructive approximation},
    series={Grundlehren der Mathematischen Wissenschaften [Fundamental
            Principles of Mathematical Sciences]},
    volume={303},
 publisher={Springer-Verlag},
     place={Berlin},
      date={1993},
     pages={x+449},
}

\bib{d20}{article}{
   author={Ditzian, Z.},
   title={Polynomial approximation and $\omega^r_\phi(f,t)$ twenty
   years later},
   journal={Surv. Approx. Theory},
   volume={3},
   date={2007},
   pages={106--151},
%   issn={1555-578X},
%   review={\MR{2342231 (2008f:41010)}},
}

\bib{dhi}{article}{
   author={Ditzian, Z.},
   author={Hristov, V. H.},
   author={Ivanov, K. G.},
   title={Moduli of smoothness and $K$-functionals in $L_p$,
   $0<p<1$},
   journal={Constr. Approx.},
   volume={11},
   date={1995},
   number={1},
   pages={67--83},
%   issn={0176-4276},
%   review={\MR{1323964 (96c:41026)}},
%   doi={10.1007/BF01294339},
}

\bib{djl}{article}{
   author={Ditzian, Z.},
   author={Jiang, D.},
   author={Leviatan, D.},
   title={Inverse theorem for best polynomial approximation in $L_p,\;0<p<1$},
   journal={Proc. Amer. Math. Soc.},
   volume={120},
   date={1994},
   number={1},
   pages={151--155},
%   issn={0002-9939},
%   review={\MR{1160297 (94b:41014)}},
%   doi={10.2307/2160180},
}

\bib{dt}{book}{
  author={Ditzian, Z.},
  author={Totik, V.},
  title={Moduli of smoothness},
  series={Springer Series in Computational Mathematics},
  volume={9},
  publisher={Springer-Verlag},
  place={New York},
  date={1987},
  pages={x+227},
  isbn={0-387-96536-X},
 % review={MR 89h:41002},
}

\bib{dz}{book}{
   author={Dzyadyk, V. K.},
   title={Vvedenie v teoriyu ravnomernogo priblizheniya funktsii polinomami},
   language={Russian},
   publisher={Izdat. ``Nauka'', Moscow},
   date={1977},
   pages={511},
%   review={\MR{0612836 (58 \#29579)}},
}

%\bib{dsh}{book}{
%  author={Dzyadyk, V. K.},
%  author={Shevchuk, I. A.},
%  title={Theory of Uniform Approximation of Functions by Polynomials},
%  publisher={Walter de Gruyter},
%  place={Berlin},
%  date={2008},
%%  pages={x+?},
%%  isbn={??},
%  }

\bib{emn}{article}{
   author={Erd{\'e}lyi, T.},
   author={M{\'a}t{\'e}, A.},
   author={Nevai, P.},
   title={Inequalities for generalized nonnegative polynomials},
   journal={Constr. Approx.},
   volume={8},
   date={1992},
   number={2},
   pages={241--255},
%   issn={0176-4276},
%   review={\MR{1152881 (93e:41020)}},
%   doi={10.1007/BF01238273},
}

%\bib{hlp}{book}{
%   author={Hardy, G. H.},
%   author={Littlewood, J. E.},
%   author={P{\'o}lya, G.},
%   title={Inequalities},
%   note={2d ed},
%   publisher={Cambridge, at the University Press},
%   date={1952},
%   pages={xii+324},
%%   review={\MR{0046395 (13,727e)}},
%}

% \bib{k2009}{article}{
%   author={Kopotun, K. A.},
%   title={On moduli of smoothness of $k$-monotone functions and
%   applications},
%   journal={Math. Proc. Cambridge Philos. Soc.},
%   volume={146},
%   date={2009},
%   number={1},
%   pages={213--223},
%%   issn={0305-0041},
%%   review={\MR{2461879 (2009i:26012)}},
%%   doi={10.1017/S0305004108001710},
%}

%\bib{k-whitney}{article}{
%   author={Kopotun, K. A.},
%   title={Whitney theorem of interpolatory type for $k$-monotone functions},
%   journal={Constr. Approx.},
%   volume={17},
%   date={2001},
%   number={2},
%   pages={307--317},
%%   issn={0176-4276},
%%   review={\MR{1814359 (2002h:41033)}},
%%   doi={10.1007/s003650010031},
%}

\bib{k-ca-sim}{article}{
   author={Kopotun, K. A.},
   title={Simultaneous approximation by algebraic polynomials},
   journal={Constr. Approx.},
   volume={12},
   date={1996},
   number={1},
   pages={67--94},
%   issn={0176-4276},
%   review={\MR{1389920 (97c:41021)}},
%   doi={10.1007/s003659900003},
}

\bib{k-ca}{article}{
   author={Kopotun, K. A.},
   title={Pointwise and uniform estimates for convex approximation of
   functions by algebraic polynomials},
   journal={Constr. Approx.},
   volume={10},
   date={1994},
   number={2},
   pages={153--178},
%   issn={0176-4276},
%   review={\MR{1305916 (95k:41014)}},
%   doi={10.1007/BF01263061},
}

% \bib{kls}{article}{
%   author={Kopotun, K. A.},
%   author={Leviatan, D.},
%   author={Shevchuk, I. A.},
%   title={New moduli of smoothness},
%   journal={preprint},
%}

 \bib{kls-ca}{article}{
   author={Kopotun, K. A.},
   author={Leviatan, D.},
   author={Shevchuk, I. A.},
   title={New moduli of smoothness: weighted DT moduli revisited and applied},
   journal={Constr. Approx.},
   language={to appear},
}

%\bib{lr}{article}{
%   author={Luther, U.},
%   author={Russo, M. G.},
%   title={Boundedness of the Hilbert transformation in some weighted Besov type spaces},
%   journal={Integral Equations Operator Theory},
%   volume={36},
%   date={2000},
%   number={2},
%   pages={220--240},
%   issn={0378-620X},
%}

\bib{mt2001}{article}{
   author={Mastroianni, G.},
   author={Totik, V.},
   title={Best approximation and moduli of smoothness for doubling weights},
   journal={J. Approx. Theory},
   volume={110},
   date={2001},
   number={2},
   pages={180--199},
%   issn={0021-9045},
%   review={\MR{1830536 (2002g:41044)}},
%   doi={10.1006/jath.2000.3546},
}

\bib{mt2000}{article}{
   author={Mastroianni, G.},
   author={Totik, V.},
   title={Weighted polynomial inequalities with doubling and $A_\infty$
   weights},
   journal={Constr. Approx.},
   volume={16},
   date={2000},
   number={1},
   pages={37--71},
%   issn={0176-4276},
%   review={\MR{1848841 (2002j:26010)}},
%   doi={10.1007/s003659910002},
}

\bib{mt1999}{article}{
   author={Mastroianni, G.},
   author={Totik, V.},
   title={Jackson type inequalities for doubling weights. II},
   journal={East J. Approx.},
   volume={5},
   date={1999},
   number={1},
   pages={101--116},
%   issn={1310-6236},
%   review={\MR{1688386 (2000c:41018)}},
}

\bib{mt1998}{article}{
   author={Mastroianni, G.},
   author={Totik, V.},
   title={Jackson type inequalities for doubling and $A_p$ weights},
   booktitle={Proceedings of the Third International Conference on
   Functional Analysis and Approximation Theory, Vol. I (Acquafredda di
   Maratea, 1996)},
   journal={Rend. Circ. Mat. Palermo (2) Suppl.},
   number={52, Vol. I},
   date={1998},
   pages={83--99},
%   review={\MR{1644543 (99g:41013)}},
}

%
%\bib{muck}{article}{
%   author={Muckenhoupt, B.},
%   title={Weighted norm inequalities for the Hardy maximal function},
%   journal={Trans. Amer. Math. Soc.},
%   volume={165},
%   date={1972},
%   pages={207--226},
%%   issn={0002-9947},
%%   review={\MR{0293384 (45 \#2461)}},
%}

\bib{n}{article}{
   author={Nevai, P.},
   title={Bernstein's inequality in $L^{p}$ for $0<p<1$},
   journal={J. Approx. Theory},
   volume={27},
   date={1979},
   number={3},
   pages={239--243},
%   issn={0021-9045},
%   review={\MR{555623 (80m:41009)}},
%   doi={10.1016/0021-9045(79)90105-9},
}

\bib{op}{article}{
   author={Operstein, V.},
   title={A Markov-Bernstein type inequality for algebraic polynomials in
   $L_p$, $0<p<1$},
   journal={J. Approx. Theory},
   volume={84},
   date={1996},
   number={2},
   pages={139--144},
%   issn={0021-9045},
%   review={\MR{1370595 (96m:41012)}},
%   doi={10.1006/jath.1996.0011},
}

\bib{pp}{book}{
   author={Petrushev, P. P.},
   author={Popov, V. A.},
   title={Rational approximation of real functions},
   series={Encyclopedia of Mathematics and its Applications},
   volume={28},
   publisher={Cambridge University Press, Cambridge},
   date={1987},
   pages={xii+371},
%   isbn={0-521-33107-2},
%   review={\MR{940242 (89i:41022)}},
}

\bib{sh-book}{book}{
   author={Shevchuk, I. A.},
   title={Polynomial approximation and traces of functions continuous on a segment},
   publisher={Naukova Dumka, Kiev},
    language={Russian},
   date={1992},
}

\bib{stein}{book}{
   author={Stein, E. M.},
   title={Harmonic analysis: real-variable methods, orthogonality, and
   oscillatory integrals},
   series={Princeton Mathematical Series},
   volume={43},
   note={With the assistance of Timothy S. Murphy;
   Monographs in Harmonic Analysis, III},
   publisher={Princeton University Press, Princeton, NJ},
   date={1993},
   pages={xiv+695},
%   isbn={0-691-03216-5},
%   review={\MR{1232192 (95c:42002)}},
}

\bib{t}{article}{
   author={Tachev, G. T.},
   title={A converse theorem for the best algebraic approximation in $L_p[-1,1]$ $(0<p<1)$},
   journal={Serdica},
   volume={17},
   date={1991},
   number={2-3},
   pages={161--166},
%   issn={0204-4110},
%   review={\MR{1148311 (93b:41020)}},
}

 \end{biblist}
\end{bibsection}

\end{document}